\newtheorem{thm}{Theorem}[section]
\newtheorem{lem}[thm]{Lemma}
\newtheorem{prop}[thm]{Proposition}
\newtheorem{cor}[thm]{Corollary}
\theoremstyle{definition}
\theoremstyle{remark}
\newtheorem{rem}{Remark}[section]
\makeatletter \@addtoreset{equation}{section}
\newcommand{\thmref}[1]{Theorem~\ref{#1}}
\newcommand{\lemref}[1]{Lemma~\ref{#1}}
\newcommand{\corref}[1]{Corollary~\ref{#1}}
\newcommand{\G}{{G(X|Y)}}
\def\a{\alpha}
\def\b{\beta}
\def\g{\gamma}
\def\l{\lambda}
\def\p{\partial}
\def\vphi{\varphi}
\def\L{\Lambda}
\def\G{\Gamma}
\def\g{\mathfrak g}
\def\and{\quad{\rm and}\quad}
\def\eps{\epsilon}
\let\lra=\longrightarrow
\def\mapright{\xrightarrow}
\def\mapright\#1{\,\smash{\mathop{\lra}\limits^{\#1}}\,}
\def\om{\omega}
\def\tri{\triangle}
\def\ul{\underline}
\newcommand{\gij}{\ensuremath{g_{i\bar j}}}
\newcommand{\as}{\underline{S}}
\numberwithin{equation}{section}
\DeclareMathOperator{\osc}{Osc}
\newcommand{\Rmnum}[1]{\expandafter\@slowromancap\romannumeral #1@}
\title{Stability of K\"ahler-Ricci flow in the space of K\"ahler metrics}
\author[Zheng]{Kai Zheng}
  \address{Academy of Mathematics and Systems Sciences, Chinese Academy of Sciences, Beijing, 100190, P.R.~China.}
  \email{kaizheng@amss.ac.cn}
\date{}
\keywords{}
\begin{document}
\maketitle

\begin{abstract}
In this paper, we prove that on a Fano manifold $M$ which
admits a K\"ahler-Ricci soliton $(\om,X)$,
if the initial K\"ahler metric $\om_{\vphi_0}$ is close to $\om$ in some weak sense,
then the weak K\"ahler-Ricci flow exists globally and converges in Cheeger-Gromov sense.
Moreover, if $\vphi_0$ is also $K_X$-invariant,
then the weak modified K\"ahler-Ricci flow
converges exponentially
to a unique K\"ahler-Ricci soliton nearby.
Especially,
if the Futaki invariant vanishes, we may delete the $K_X$-invariant assumption.
The methods based on the metric geometry of the space of the K\"ahler metrics are potentially applicable to other stability problem of geometric flow near a critical metric.
\end{abstract}
\section{Introduction}
Ricci flow, introduced by Hamilton \cite{MR664497},
plays an important role in understanding the geometric and topological
structure of the manifolds which it lives on.
We call the Ricci flow a K\"ahler-Ricci flow, if the underlying manifold is a K\"ahler manifold.
Furthermore, the normalized K\"ahler-Ricci flow is given by \begin{equation}\label{KRF}
\begin{cases}
\frac{\p}{\p t}\om&=-Ric+\lambda\om,\\
\om(0)&=\om_{\varphi_0}
\end{cases}
\end{equation}
in which $\om(0)$ stays in the canonical class $2\pi C_1(M)$ and $\l$ is the sign of the first Chern class.
Cao \cite{MR799272} first showed that K\"ahler-Ricci flow \eqref{KRF} has long time existence
and converges to a K\"ahler-Einstein metric
when the first Chern class is negative or zero.
Now we restrict ourselves in the situation that the first Chern class is positive.
Since the K\"ahler-Ricci flow preserves the K\"ahler class,
we rewrite
the K\"ahler-Ricci flow in the potential level as
\begin{equation}\label{KRF potential}
\begin{cases}
\frac{\p\vphi}{\p t}&=\log\frac{\om^n_\vphi}{\om^n}
+\vphi-h_\om+a(t),\\
\vphi(0)&=\vphi_0
\end{cases}
\end{equation}
where $a(t)$ is a constant depending on $t$
and $h_\om$ is the Ricci potential of the reference metric $\om$
defined by
\begin{eqnarray}\label{ricci pot of back}
\sqrt{-1}\p\bar\p h_\om=Ric(\om)-\om \text{ and }
\int_Me^{h_\om}\om^n=Vol(M).
\end{eqnarray}
In Perelman \cite{Perelman}, he obtained an estimate of the K\"ahler-Ricci flow
and proved that
the K\"ahler-Ricci flow converges to a K\"ahler-Einstein metric
in the sense of Cheeger-Gromov when one exists for any initial K\"ahler metric.
Later on,
Sesum-Tian \cite{MR2427424} gave the detailed proof.
Furthermore, Tian-Zhu \cite{MR2291916} extended it to the case of K\"ahler-Ricci soliton for a $K_X$-invariant initial metric. In which, a K\"ahler-Ricci soliton is a K\"ahler metric such that
if there is a holomorphic vector field $X$ such that
\begin{align}\label{sol}
L_X\om=Ric-\om.
\end{align}
Since the right side of the equation \eqref{sol} is real-valued, we obtain $L_{\Im X}\om=0$ and $\Im X$, the imaginary part of $X$, generates a one-parameter isometry group $K_X$.

In order to study the asymptotic behavior of the K\"ahler-Ricci flow, we consider the stability problem.
I.e. on a K\"ahler manifold $M$ admits a K\"ahler-Ricci soliton $(\om,X)$,
in what kind of neighborhood of $\om$, the K\"ahler-Ricci flow
with its initial datum stays, converges
in some sense (maybe exponentially) to a K\"ahler-Ricci soliton.

This stability problem of the K\"ahler-Ricci flow has been initiated and studied by many people, for complete references we refer to Chen-Li \cite{ChenLi-2009}. In
Chen-Li \cite{ChenLi-2009}
and Tian-Zhu \cite{zhu-2008}, they consider perturbing both the initial metric and the complex structure near a K\"ahler-Einstein metric.

In this paper, we focus on perturbing the initial metric near the K\"ahler-Ricci soliton without changing the complex structure.
Firstly, we give a direct proof of the long time existence and convergence in Cheeger-Gromov sense within the frame of Donaldon's programme \cite{MR2103718}. The proof of which is based on the geometry of the space of K\"ahler metrics. Next, we derive the exponential convergence and the uniqueness of the limit via calculating the energy function.
Set $\mathcal{N}(\eps_0;B,p)$ be a small neighborhood of the zero function depends on $\eps_0$, $B$ and $p$ which will be specified in Section \ref{WF}.
The main results of this paper are given as follows:
\begin{thm}\label{sta sol}
On a manifold
admits a K\"ahler-Ricci soliton $(\om,X)$, there exists a positive constant $\eps_0$,
if the initial potential $\vphi_0$ stays in $\mathcal{N}(\eps_0;B,p)$,
then the weak K\"ahler-Ricci flow exists globally and converges in Cheeger-Gromov sense.
Moreover, if $\vphi_0$ is $K_X$-invariant,
the weak modified K\"ahler-Ricci flow
converges exponentially to a unique K\"ahler-Ricci soliton nearby.
\end{thm}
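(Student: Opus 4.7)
The strategy combines the metric geometry of the space of K\"ahler metrics (in the Mabuchi--Chen--Darvas sense) with Perelman-type estimates for \eqref{KRF potential}. I would take $\mathcal{N}(\eps_0;B,p)$ as the intersection of a small ball of radius $\eps_0$ around the soliton potential in a Finsler $d_p$-type distance on the space of K\"ahler metrics with a sublevel set $\{E\le B\}$ of a higher-order functional (entropy or $p$-energy) that rules out collapsing and pins down the initial regularity. The $d_p$-distance is the central device: it is (almost) monotone along \eqref{KRF potential} when measured against a stationary soliton, so it provides a trapping mechanism near $\om$.

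\textbf{Step 1: long-time existence and Cheeger--Gromov convergence.} I would first approximate $\vphi_0$ by smooth potentials $\vphi_0^\d$ and run the smooth K\"ahler-Ricci flow from each. The bound $d_p(\vphi_0^\d,0)\le \eps_0$, together with the contraction property of $d_p$ under \eqref{KRF potential}, confines each $\vphi^\d(t)$ to a $d_p$-neighborhood of $\om$ where Perelman's uniform estimates on scalar curvature, diameter, and the Ricci potential apply. Passing $\d\to 0$ yields the weak K\"ahler-Ricci flow from $\vphi_0$. Cheeger--Gromov convergence then follows by extracting subsequential smooth limits away from $t=0$ and using monotonicity of a modified K-energy, in the spirit of Sesum--Tian and Tian--Zhu, to force the limit to be a K\"ahler-Ricci soliton.

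\textbf{Step 2: exponential convergence under $K_X$-invariance.} When $\vphi_0$ is $K_X$-invariant the modified K\"ahler-Ricci flow preserves this symmetry, and the modified Mabuchi functional $\mathcal M_X$ of Tian--Zhu decreases along it. Near $\om$ its Hessian is positive on $K_X$-invariant directions transverse to the $\Aut_0(M,X)$-orbit, so a Lojasiewicz--Simon inequality for $\mathcal M_X$, combined with the energy identity $\tfrac{d}{dt}\mathcal M_X = -\|\dbar\nabla\dot\vphi\|^2$, produces exponential decay of $\mathcal M_X$ and hence of the $d_p$-distance to a uniquely determined nearby K\"ahler-Ricci soliton. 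The uniqueness of the limit comes from the exponential rate together with Tian--Zhu's uniqueness of solitons in a K\"ahler class modulo automorphisms. When the Futaki invariant vanishes one has $X=0$, the $\Aut_0(M,X)$-orbit in the $K_X$-invariant slice becomes trivial, and the $K_X$-invariance hypothesis can be dropped.

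\textbf{Main obstacle.} The principal difficulty I anticipate is in Step 1: constructing the weak K\"ahler-Ricci flow from the rough initial datum $\vphi_0$ as a bona fide limit of the smooth approximate flows $\vphi^\d$, while simultaneously propagating Perelman's \emph{uniform} smoothing estimates through the limit so that they depend on the initial data only via the weak norm defining $\mathcal N(\eps_0;B,p)$. This is where the interplay between the Finsler geometry of the space of K\"ahler metrics and the parabolic regularity of the K\"ahler-Ricci flow is most delicate. A secondary subtlety in Step 2 is to set up the Lojasiewicz--Simon estimate transversely to the $\Aut_0(M,X)$-orbit within the $K_X$-invariant slice and to verify coercivity of the transverse Hessian of $\mathcal M_X$ there.
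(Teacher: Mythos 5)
Your outline diverges from the paper's argument at its most load-bearing point, and the divergence is a genuine gap rather than an alternative route. The trapping mechanism you propose --- that a Finsler $d_p$-distance to the stationary soliton is ``(almost) monotone'' or contracted along \eqref{KRF potential}, so the flow stays in a $d_p$-ball --- is asserted without justification, and no such contraction is available here. The only quantities known to be monotone along the flow are the $K$-energy (resp.\ Perelman's $\mu$-functional / the modified $K$-energy in the soliton case), and these do not control the distance to $\om$ because of the $\Aut_0(M)$-orbit of solitons: a potential can have arbitrarily small energy while drifting far from $\om$ along the orbit. The paper's actual mechanism is a continuity/bootstrap argument: one assumes the flow first reaches the boundary of a $C^{2,\a}$-neighborhood at time $T$, projects $\vphi(T)$ onto the finite-dimensional totally geodesic submanifold $\mathcal{E}_0$ of soliton potentials inside the nonpositively curved metric space $\mathcal{H}_0$ (the projection is unique and quantitatively bounded, Lemmas \ref{Kf on SoKM: dis imp norm}--\ref{Kf on SoKM: gauge bound}), pulls back by the resulting automorphism, and uses a compactness--contradiction argument (Lemma \ref{PCF on SoKM: norm bound by enery}) showing that small energy plus bounded higher norms forces the gauged potential back to half the radius, so the flow restarts. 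Your plan also leans on Perelman's uniform estimates, which the paper explicitly avoids (no a priori long-time existence or deep estimates are assumed); and the definition of $\mathcal{N}(\eps_0;B,p)$ in the paper is an $L^\infty$ bound on $\vphi_0$ together with an $L^p$ bound on $\om_{\vphi_0}^n/\om^n$, reduced to the $C^{2,\a}$-small case at a fixed positive time via the Song--Tian/Chen--Tian--Zhang smoothing estimate and a maximum-principle argument (Lemma \ref{weak ini}) --- not a $d_p$-ball.

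For Step 2, a Lojasiewicz--Simon argument for the modified Mabuchi functional is in principle a different route, but as written it is also incomplete: you would need analyticity of $\mathcal{M}_X$ and a Fredholm transverse Hessian on the $K_X$-invariant slice, and you must contend with the fact that the K\"ahler--Ricci flow is only a pseudo-gradient flow of this functional in the Mabuchi metric (your stated energy identity $\frac{d}{dt}\mathcal{M}_X=-\|\bar\partial\nabla\dot\vphi\|^2$ is the Calabi-flow identity; along the modified flow the dissipation is $-\frac1V\int_M|\nabla\dot\phi|^2e^{\theta_X(\phi)}\om_\phi^n$). The paper instead gets the exponential rate by a De Giorgi--Nash--Moser iteration bounding $\|\dot\phi\|_\infty$ by $\|S-n-\tri\theta_X(\phi)\|_\infty$ (using that Sobolev and Poincar\'e constants are invariant under the gauge transformations), the vanishing of the modified Futaki invariant, and convergence of the spectrum of $\tri_{g_\phi}+X$, in the style of Chen--Tian. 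To repair your proof you would need either to establish the $d_p$-contraction you invoke (which I do not believe holds in the required form) or to replace it with the energy-plus-projection trapping argument.
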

When the Futaki invariant vanishes, it is obvious that the holomorphic vector fields $X=0$ and the K\"ahler-Ricci soliton is a K\"ahler-Einstein metric, then we have
\begin{thm}\label{sta KE}
On a K\"ahler-Einstein manifold, there exists a positive constant $\eps_0$,
if the initial potential $\vphi_0$ stays in $\mathcal{N}(\eps_0;B,p)$,
then the weak K\"ahler-Ricci flow exists globally and
converges exponentially to a unique K\"ahler-Einstein metric nearby.
\end{thm}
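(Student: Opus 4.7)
The plan is to obtain Theorem \ref{sta KE} as a corollary of Theorem \ref{sta sol} by checking that the additional hypotheses needed for the exponential-convergence half of the soliton theorem are automatic in the K\"ahler-Einstein setting. First I would record the observation already noted just before the statement: on a Fano manifold the vanishing of the Futaki invariant, combined with the soliton equation \eqref{sol}, forces the holomorphic vector field $X$ to be zero. (Pairing \eqref{sol} with the holomorphy potential of $X$ produces a nonnegative expression bounded above by the Futaki invariant evaluated on $X$, so both must vanish.) Hence any K\"ahler-Ricci soliton $(\om,X)$ on $M$ is necessarily a K\"ahler-Einstein metric, and the ambient hypothesis of Theorem \ref{sta sol} is in force.

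Second, since $X=0$ the imaginary part $\Im X$ is identically zero, so the one-parameter isometry group $K_X$ generated by $\Im X$ is trivial. Every smooth $\vphi_0$ on $M$ is therefore $K_X$-invariant tautologically, and the $K_X$-invariance hypothesis in the second half of Theorem \ref{sta sol} is vacuous. Furthermore, with $X=0$ the modified K\"ahler-Ricci flow coincides with the normalized K\"ahler-Ricci flow \eqref{KRF potential}, so any exponential-convergence and uniqueness statement for the modified flow transfers verbatim to the unmodified flow.

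Combining these reductions, I would apply Theorem \ref{sta sol} to an arbitrary $\vphi_0 \in \mathcal{N}(\eps_0;B,p)$ to obtain global existence of the weak K\"ahler-Ricci flow, exponential convergence, and uniqueness of the nearby limiting K\"ahler-Einstein metric, which together constitute Theorem \ref{sta KE}. The constant $\eps_0$ and the neighborhood $\mathcal{N}(\eps_0;B,p)$ can be inherited from Theorem \ref{sta sol} without modification. The main (and in fact only) point requiring attention is that the proof of Theorem \ref{sta sol} should not exploit $K_X$-invariance through a device that becomes degenerate when $K_X$ is trivial; since $K_X$-invariance is a symmetry condition that holds for all $\vphi_0$ when $K_X=\{e\}$, no such degeneration occurs and the reduction is clean.
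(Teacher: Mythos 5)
Your reduction is logically valid: a K\"ahler-Einstein metric is a K\"ahler-Ricci soliton with $X=0$ (for this direction you do not even need the Futaki-invariant computation --- $L_0\om=0=Ric(\om)-\om$ is immediate), the group $K_X$ generated by $\Im X$ is then trivial so the invariance hypothesis of \thmref{sta sol} is vacuous, and the modified flow \eqref{mkrf} coincides with the normalized flow \eqref{KRF potential}, so \thmref{sta KE} is literally the $X=0$ specialization of \thmref{sta sol}. This is, however, the reverse of the paper's route. The paper proves \thmref{sta KE} first and directly, by combining Proposition \ref{no hol} (the case of no holomorphic vector fields), Proposition \ref{hol} (long-time existence and Cheeger-Gromov convergence via $K$-energy decay, Bando-Mabuchi uniqueness, and projection onto the totally geodesic submanifold $\mathcal{E}_0$), Proposition \ref{sta} (exponential convergence via the De Giorgi iteration and the vanishing of the Futaki invariant), and \lemref{weak ini} (weakening the initial condition to $\mathcal{N}(\eps_0;B,p)$); only afterwards is this machinery generalized to solitons in Propositions \ref{hol sol} and \ref{sol exp}, with the $\mu$ functional and the modified $K$-energy replacing the $K$-energy. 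Since the soliton case is established independently of the \emph{statement} of \thmref{sta KE} (it only imitates its arguments), your specialization is not circular; what it buys is brevity, and what it costs is that all the substantive content is outsourced to \thmref{sta sol}, whose proof the paper itself only sketches by analogy with the very K\"ahler-Einstein arguments you are bypassing. Your one point of caution --- that nothing in the soliton proof degenerates when $K_X$ is trivial --- is the right thing to check and does hold: the weighted measure $e^{\theta_X(\phi)}\om_\phi^n$ reduces to $\om_\phi^n$, the space $M_X(\om)$ becomes all K\"ahler potentials, and by Matsushima's theorem $Aut_r(M)=Aut_0(M)$ on a K\"ahler-Einstein manifold, so the single $Aut_r(M)$-orbit of solitons is exactly the $Aut_0(M)$-orbit of K\"ahler-Einstein metrics furnished by \thmref{BM en}.
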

Simon \cite{MR727703} studied the asymptotic behavior of the gradient flow
of the variation problem by so called the Lojasiewicz-Simon inequality
which compares the distance to the critical set
with the norm of the gradient of the functional
in the $L^2$ space under the condition
that the functional should be analytic. The underlying idea is to reduce the infinite-dimensional problem to a finite-dimensional problem. Perelman \cite{Perelman1} introduced a new functional called $\mu$ functional and pointed out that the Ricci flow is the gradient flow of the $\mu$ functional up to a diffeomorphism.

However, in this paper, we do not apply the Lojasiewicz-Simon inequality to the $\mu$ functional directly. In fact, we provide a new approach to study the asymptotic behavior of the flow which is even only a pseudo-gradient flow of some functional,
since
in K\"ahler setting, geometry gives us more information.
To be precise, the critical set in the space of K\"ahler metrics
is a finite dimension Riemannian symmetric space we will explained later.

Since the K\"ahler-Ricci flow is the pseudo-gradient flow of the $K$-energy,
in order to make the mechanism of our proof more clear, we firstly prove \thmref{sta KE} under the assumption that the $C^{2,\a}$ norm of $\vphi_0$ is small. Furthermore, we generalize our approach to the case of K\"ahler-Ricci soliton, \thmref{sta sol}.

We sketch our proof of \thmref{sta sol} and \thmref{sta KE} as follows.  We first prove the K\"ahler-Ricci flow \eqref{KRF potential} after pulling back by the corresponding holomorphic transformations will always stay in a small neighborhood near the background K\"ahler-Einstein metric. When $M$ has no nontrivial holomorphic vector fields, it is not necessary to find the transformations and in Section \ref{Nnhvf} the proof of which is given. However,
in general,
when $M$ admits nontrivial holomorphic vector fields, in Section \ref{CHT} we develop a new method to pick up the appropriate transformations following the trace of the K\"ahler-Ricci flow in the space of normalized K\"ahler potential $\mathcal{H}_0$ (c.f. \eqref{SKM: H0}). It has been shown by
Mabuchi \cite{MR909015}, Donaldson \cite{MR1736211} and  Semmes \cite{MR1165352} independently that $\mathcal{H}_0$ is a infinite dimensional symmetry space of negative curvature. Later, Chen \cite{MR1863016} proved $\mathcal{H}_0$ is also a metric space.
Since the space of potentials of K\"ahler-Einstein metrics, $\mathcal{E}_0$, is a totally geodesic submanifold
in $\mathcal{H}_0$,
the projection $\rho$ minimizing the distance function from the K\"ahler-Ricci flow to $\mathcal{E}_0$ is uniquely determined.
The Bando-Mabuchi's uniqueness theorem of K\"ahler-Einstein metric \cite{MR946233} implies $\om_\rho$
is different from the reference K\"ahler-Einstein metric
by a holomorphic transformation.
The projection K\"ahler-Einstein metric is exactly the new reference metric we acquired.

Another way to derive a holomorphic transformation (in the Appendix) of $\vphi\in \mathcal{H}_0$ is to
minimize the $I-J$ functional in $\mathcal{E}_0$, which has been introduced by Bando-Mabuchi \cite{MR946233} to prove the uniqueness of the K\"ahler-Einstein metric.
However, their method can not be applied in our case directly, since in general the hessian of $I-J$ functional is not strictly positive, i.e.~ the minimizer is not unique.
Nevertheless, as we observed
when the $C^{2,\a}$ norm of $\vphi$ is small,
the hessian of $I-J$ functional is indeed strictly positive.
Therefore,
the holomorphic transformation
is uniquely determined.

Next, in Section \ref{EC}, we derive the exponential convergence of the K\"ahler-Ricci flow by computing the energy functions and using the Futaki invariant. The key idea is since the geometric quantities such as the Sobolev constant and the Poincare constant are invariant under the holomorphic transformation, the De Giorgi-Nash-Moser iteration can be applied to control $\frac{\p\vphi}{\p t}$.

Then in Section \ref{KRS}, we prove a stability theorem of K\"ahler-Ricci flow near a K\"ahler-Ricci soliton $(\om,X)$ similarly to the case of K\"ahler-Einstein metric.
We first prove the K\"ahler-Ricci flow \eqref{KRF potential} modulo automorphisms will always stay in a small neighborhood near the background K\"ahler-Ricci soliton for arbitrary initial K\"ahler potential with small $C^{2,\a}$ norm.
The key idea is to use Perelman's $\mu$ functional \cite{Perelman1} instead of the $K$-energy, since the hessian of the $\mu$ functional is nonnegative at a K\"ahler-Ricci soliton within the canonical class \cite{zhu-2008}.
Furthermore, we reparametrize the K\"ahler-Ricci flow \eqref{KRF} by the automorphisms $\varsigma(t)$ generated by the real part $\Re X$ of $X$ such that
\begin{equation}\label{mkrf}
\begin{cases}
\frac{\p}{\p t}\om_\phi
&=-Ric(\om_\phi)+\om_\phi+L_{\Re X}\om_\phi,\\
\om_{\phi(0)}&=\om_{\vphi_0}.
\end{cases}
\end{equation}
It is obvious that the K\"ahler-Ricci soliton is the stationary solution of the modified K\"ahler-Ricci flow \eqref{mkrf}. Since the K\"ahler-Ricci soliton $(\om,X)$ is $K_X$-invariant and the K\"ahler-Ricci flow is also invariant under the holomorphic differmorphism, without lose of generality, we assume the initial datum is $K_X$-invariant. Then we generalize the exponential convergence of the K\"ahler-Ricci flow derived in Section \ref{EC} to the modified K\"ahler-Ricci flow \eqref{mkrf}.

Finally, in Section \ref{WF}, at a fixed time, we show that the $C^{2,\a}$ norm of the
potential is small when the initial value is small under
certain weak condition. The main idea is to use the estimate
introduced in \cite{ctz-2008}.

As a corollary of \thmref{sta sol}, we deduce that the limit metric
of the K\"ahler-Ricci flow is unique. Set $\{\vphi(t_i)\}$ be a
sequence of the solution of the K\"ahler-Ricci flow which converges
to a K\"ahler-Einstein metric or K\"ahler-Ricci soliton $g_\infty$,
if there exists, then there exists some $\vphi\in\{\vphi(t_i)\}$
such to the stability-condition given in \thmref{sta sol}. According
to the stability \thmref{sta sol}, the K\"ahler-Ricci flow with
initial-value $\vphi$ converges exponentially to a K\"ahler-Einstein
metric $g^1_\infty$ (or K\"ahler-Ricci soliton respectively).
Furthermore, since we assume that $\{\vphi(t_i)\}\rightarrow
g_\infty$, so $g^1_\infty$ must coincide with $g_\infty$.

We emphasize that our approach using to prove \thmref{sta sol} is also
applicable to the case for the general pseudo-gradient flow.
I.e.~neither the condition ``flow is a gradient flow of some
functional", the Perelman's deep estimate \cite{Perelman}, nor a prior
long time existence of the flow is required. It is possible that our
method can be utilized to solve similar problem of other
geometric flow problems. For instance, to prove the stability theorem of the
pseudo-Calabi flow near a constant scalar curvature K\"ahler (cscK)
metric in \cite{zheng-2009} and of the Calabi
flow near a extremal metric in \cite{zheng-2009cf}.

The paper is organized as follows: In section \ref{Bg} we review the known results of the space of K\"ahler metrics and the well-posedness of the pseudo-Calabi flow (c.f. \eqref{PCF}) we obtain in \cite{zheng-2009}. In Section \ref{Nnhvf}, Section \ref{NHVF} and Section \ref{EC} we first prove theorem \thmref{sta KE} under the assumption that the $C^{2,\a}$ norm of the initial K\"ahler potential is small. Then we prove \thmref{sta sol} under the same assumption in Section \ref{KRS}. Finally, in Section \ref{WF} we explain how to weaken the initial condition to which stated in both \thmref{sta KE} and \thmref{sta sol}. In the Section \ref{ACHT}, we explain another method to choose the holomorphic transformation.

\

\noindent {\bf Acknowledgements:}
The author is grateful to thank Prof. Xiuxiong Chen
who brought him into K\"ahler geometry
and introduced him to this problem.
He is also grateful for Prof. Weiyue Ding
for his constant encouragement and support.
He also wants to express his thanks to
Prof. Xiaohua Zhu for his interest in this problem
and his many helpful discussions.

\section{Notations and basic results}\label{Bg}
Let $M$ be a compact K\"ahler manifold of complex dimension $n$ with positive first Chern class $C_1(M)$ and $\om$ be a K\"ahler form which represents the canonical class $2\pi C_1(M)$.
In a local holomorphic coordinate $z_1,z_2,\dotsm z_n$, $\om$ is expressed by
\[
 \om = \sqrt{-1}\sum\limits_{i=1}^{n} \gij dz^i \wedge
dz^{\bar j}.
\]
The corresponding Rimannian metric is given by
$$ g = \sum\limits_{i=1}^{n} \gij dz^i \otimes dz^{\bar j }.$$
For a K\"ahler metric $\om$, the volume form is
\begin{equation*}
dV=\omega^{n}
={(\sqrt{-1})}^n\det(\gij)
dz^1\wedge dz^{\bar{1}} \wedge\dotsm\wedge dz^n\wedge dz^{\bar{n}}.
\end{equation*}
The Ricci form taking the form
$$Ric
= \sqrt{-1}\sum\limits_{i=1}^{n} R_{i\bar j} dz^i \wedge dz^{\bar{j}}
=- \sqrt{-1}\p \bar \p \log \det \om^n$$
is a closed real $(1,1)$-form and stays in $2\pi C_1(M)$.
According to which, we obtain the scalar curvature satisfies
$$S\om^{n}=n Ric \wedge \om^{n-1}.$$
Furthermore,
a direct calculation gives the average of the scalar curvature
\begin{equation*}
\as =\frac{1}{V}\int_{M} S dV
=\frac{n}{V} \int_{M} Ric \wedge \omega^{n-1}
=n.
\end{equation*}

Let $\mathcal{K}$ be the set of all K\"ahler forms on $M$ representing $2\pi C_1(M)$ and $\mathcal{E}$ be the set of all K\"ahler-Einstein metrics in $\mathcal{K}$. According to $\p\bar\p$ lemma, for any K\"ahler metric $\om'$ in $\mathcal{K}$ there exists a smooth real-valued function $\vphi$ such that $\om'=\om+\sqrt{-1}\p\bar\p\vphi$.
Then the space of K\"ahler potentials of $\mathcal{K}$
is given by
\[
\mathcal{H}
=\{\vphi\in C^{\infty}(M,\mathbb{R})\vert\om+\sqrt{-1}\p\bar\p\vphi\in\mathcal{K}\}.
\]
Apparently, we have a isomorphism $T\mathcal{H}\cong \mathcal{H}\times C^\infty(M,\mathbb{R})$.
Mabuchi \cite{MR909015}, Donaldson \cite{MR1736211}
and Semmes \cite{MR1165352} independently
defined a Riemannian metric on $\mathcal{H}$ by
\[
\int_M f_1 f_2 \om^n_\vphi
\]
for any $f_1,f_2\in T_\vphi \mathcal{H}$.
For any path $\vphi(t)(0\leq t\leq 1)$ in $\mathcal{H}$, the length is given by
\begin{align}\label{SKM: length}
L(\vphi(t))=\int_0^1\sqrt{\int_M\vphi'(t)^2\om^n_{\vphi(t)}}dt
\end{align} and the geodesic equation is
\begin{align*}
\vphi''(t)-\frac{1}{2}|\nabla_t\vphi'(t)|_{\vphi(t)}^2=0
\end{align*}
in which we use $'$ to denote the differentiation in $t$ and $\nabla_t$ to denote the covariant derivative for the metric $g_{\vphi(t)}$.
The geodesic equation enables us to define the connection on the tangent bundle. For any tangent vector field $\psi(t)$ along the path $\vphi(t)$, the covariant derivative along $\vphi(t)$ is defined by
\begin{align*}
D_t\psi=\frac{\p\psi}{\p t}-\frac{1}{2}(\nabla_t\psi,\nabla_t\vphi')_{g_{\vphi}}.
\end{align*}
Then the connection at $\vphi$ is given by
\begin{align*}
\G(\psi_1,\psi_2)=-\frac{1}{2}(\nabla\psi_1,\nabla\psi_2)_{g_{\vphi}}
\end{align*} for any $\psi_1$ and $\psi_2$ in $T_\vphi\mathcal{H}$. Moreover, $\G$ is torsion-free and metric-compatible. The following theorem is proved in  \cite{MR909015}, \cite{MR1736211} and \cite{MR1165352}.
\begin{thm}(Mabuchi \cite{MR909015}, Donaldson \cite{MR1736211}, Semmes \cite{MR1165352})
The Riemannian manifold $\mathcal{H}$ is an infinite dimensional symmetric space; it admits a Levi-Civita connection whose curvature is covariant constant. At a point $\vphi\in\mathcal{H}$ the curvature is given by
\begin{align*}
R_\vphi(\delta_1\vphi,\delta_2\vphi)\delta_3\vphi
=-\frac{1}{4}\{\{\delta_1\vphi,\delta_2\vphi\}_\vphi,
\delta_3\vphi\}_\vphi,
\end{align*} where $\{,\}_\vphi$ is the Poisson bracket on $C^\infty(M)$ of the symplectic form $\om_\vphi$.
\end{thm}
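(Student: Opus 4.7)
The plan is to verify the three assertions separately: (i) that the connection $\Gamma$ displayed just above the theorem is the Levi-Civita connection of the $L^2$ metric, (ii) compute its curvature and recognize the Poisson-bracket expression, and (iii) check that this curvature is parallel, which is the defining property of a locally symmetric space; since $\mathcal H$ is contractible and the connection is real-analytic, local symmetry upgrades to the claimed infinite-dimensional symmetric-space structure.

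First I would verify that $\Gamma$ is torsion-free and metric-compatible. Torsion-freeness is immediate since $\Gamma(\psi_1,\psi_2) = -\tfrac12(\nabla\psi_1,\nabla\psi_2)_{g_\varphi}$ is manifestly symmetric in its two arguments. For metric-compatibility I would differentiate $\int_M \psi_1 \psi_2\, \omega_\varphi^n$ along a path $\varphi(t)$ in $\mathcal H$, using the identity $\partial_t \omega_\varphi^n = \Delta_{\varphi} \varphi'\, \omega_\varphi^n$ and integrating by parts to produce exactly the term $-\tfrac12(\nabla\psi_i,\nabla\varphi')_{g_\varphi}$ needed to rewrite $\partial_t \psi_i$ as $D_t\psi_i$. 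Uniqueness of the Levi-Civita connection then forces this $\Gamma$ to be it.

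Next I would compute the curvature. The standard trick is to extend $\delta_1\varphi,\delta_2\varphi,\delta_3\varphi$ to $\varphi$-independent vector fields on $\mathcal H$ (so that their Lie brackets vanish), and unpack
\begin{equation*}
R_\varphi(\delta_1\varphi,\delta_2\varphi)\delta_3\varphi = D_{\delta_1\varphi} D_{\delta_2\varphi}\delta_3\varphi - D_{\delta_2\varphi} D_{\delta_1\varphi}\delta_3\varphi,
\end{equation*}
using the explicit formula for $D_t$. The first-order $D_{\delta_2\varphi}\delta_3\varphi$ contributes $-\tfrac12(\nabla\delta_2\varphi,\nabla\delta_3\varphi)_{g_\varphi}$, and differentiating the metric $g_\varphi$ again in the direction $\delta_1\varphi$ produces, after cancellation of the symmetric terms, precisely the antisymmetric combination
\begin{equation*}
-\tfrac14\bigl\{\{\delta_1\varphi,\delta_2\varphi\}_\varphi,\delta_3\varphi\bigr\}_\varphi.
\end{equation*}
Recognizing this as a nested Poisson bracket is the computational heart of the argument: one uses that $g_\varphi^{i\bar j}(\partial_i u\,\partial_{\bar j} v - \partial_i v\,\partial_{\bar j} u)$ is (up to $\sqrt{-1}$) the Poisson bracket of the Hamiltonian vector fields of $u$ and $v$ with respect to $\omega_\varphi$.

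Finally, to obtain the symmetric-space structure I would show $\nabla R \equiv 0$. The key observation is that the Poisson bracket $\{\cdot,\cdot\}_\varphi$ transforms under variations of $\varphi$ in exactly the manner that the connection $D_t$ was designed to compensate: differentiating the right-hand side of the curvature formula in any tangent direction $\psi$ produces terms proportional to $(\nabla\psi,\nabla\{\cdot,\cdot\}_\varphi)$ which are absorbed into $D_\psi$ applied separately to each slot. Consequently $\nabla R = 0$ holds pointwise on $\mathcal H$, yielding the locally symmetric property; combined with the convexity of $\mathcal H$ along affine paths and the existence of geodesic symmetries (implemented by $\dot\varphi \mapsto -\dot\varphi$ along any geodesic emanating from a base point), this gives the global symmetric-space statement. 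I expect the main obstacle to be the bookkeeping in the curvature computation, specifically identifying the double commutator as the iterated Poisson bracket cleanly; the parallelism of $R$ and the symmetric-space conclusion are then formal consequences.
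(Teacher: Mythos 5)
The paper does not prove this theorem; it is quoted verbatim from Mabuchi, Donaldson and Semmes, so there is no in-paper argument to compare against. Your outline is the standard direct verification used in those sources: check that $\G$ is torsion-free and metric-compatible (the factor $\tfrac12$ coming from the identity $\p_t\om_\vphi^n=\tri_\vphi\vphi'\,\om_\vphi^n$ with the complex Laplacian), compute $D_1D_2-D_2D_1$ on constant vector fields and identify the antisymmetrized second variation of $g_\vphi^{i\bar j}$ with the nested Poisson bracket, and then observe $\nabla R=0$. One caveat on your last paragraph: the ``global'' symmetric-space upgrade via geodesic symmetries is not actually available here, since smooth geodesics between arbitrary points of $\mathcal{H}$ need not exist (Chen only provides $C^{1,1}$ geodesics, and Lempert--Vivas later showed this is sharp); but this does not matter, because the content of the theorem as stated is precisely the curvature formula together with $\nabla R=0$, which your computation does establish.
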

Chen established the following theorem in \cite{MR1863016}.
\begin{thm}(Chen \cite{MR1863016})
The following is true\textup{:}
\begin{enumerate}
\renewcommand{\labelenumi}{(\roman{enumi})}
\item $\mathcal{H}$ is convex by $C^{1,1}$ geodesics.
\item $\mathcal{H}$ is a metric space.
\end{enumerate}
\end{thm}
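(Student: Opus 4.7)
The plan is to reformulate the geodesic equation on $\mathcal{H}$ as a homogeneous complex Monge--Amp\`ere equation on a product manifold with boundary, solve the resulting Dirichlet problem with $C^{1,1}$ regularity via a continuity method, and then use the approximate geodesics it produces to verify the metric-space axioms.

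For part (i), let $R = [0,1] \times S^1 \subset \mathbb{C}$ be an annulus with coordinate $w$ and $\pi \colon M \times R \to M$ the projection. An $S^1$-invariant function $\Phi$ on $M \times R$ corresponds to a path $t \mapsto \varphi(\cdot,t)$ in $\mathcal{H}$ via $t = \operatorname{Re} \log w$. A direct calculation shows that the geodesic equation $\varphi''(t) - \tfrac{1}{2} |\nabla_t \varphi'(t)|^2_{\varphi(t)} = 0$ is equivalent to the homogeneous complex Monge--Amp\`ere equation
\[
(\pi^* \omega + \sqrt{-1}\,\partial\bar\partial \Phi)^{n+1} = 0, \qquad \pi^* \omega + \sqrt{-1}\,\partial\bar\partial \Phi \ge 0,
\]
on $M \times R$. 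I would attack the associated Dirichlet problem (prescribed boundary values $\varphi_0, \varphi_1$ on the two boundary circles) by a continuity method, approximating by the non-degenerate equation with right-hand side $\epsilon\, \omega^n \wedge (\sqrt{-1}\, dw \wedge d\bar w)$ and then letting $\epsilon \to 0$. Standard techniques yield $C^0$ bounds and boundary $C^1$ bounds; the real obstacle is a uniform-in-$\epsilon$ interior $C^{1,1}$ estimate on the full complex Hessian of the solution $\Phi_\epsilon$. I expect to obtain this by applying the maximum principle to a carefully chosen test function built from $\Delta \Phi_\epsilon$, $|\nabla \Phi_\epsilon|^2$, and an auxiliary barrier adapted to the product structure of $M \times R$. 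Weak-$*$ compactness in $C^{1,1}$ then delivers the desired $C^{1,1}$ geodesic.

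For part (ii), symmetry of $d$ follows by reversing the parameter and the triangle inequality by concatenating paths, so the substantive claim is non-degeneracy: $d(\varphi_0, \varphi_1) = 0$ must imply $\varphi_0 = \varphi_1$. Using the $C^{1,1}$ almost-geodesic $\Phi_\epsilon$ as a competitor, I would establish the a priori inequality $\|\varphi_0 - \varphi_1\|_{L^\infty} \le C \cdot d(\varphi_0,\varphi_1)$ by integrating $\partial_t \Phi_\epsilon$ along the path and applying Cauchy--Schwarz against $\omega_{\varphi(t)}^n$, using the volume normalization $\int_M \omega_{\varphi(t)}^n = \mathrm{Vol}(M)$. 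The non-positive sectional curvature recorded in the preceding Mabuchi--Donaldson--Semmes theorem, together with the curvature formula $R(\delta_1,\delta_2)\delta_3 = -\tfrac{1}{4}\{\{\delta_1,\delta_2\},\delta_3\}$, makes the distance function convex along geodesics and underpins this comparison.

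The main obstacle throughout is the uniform $C^{1,1}$ regularity of the approximate Monge--Amp\`ere solutions: producing a bound on the full complex Hessian of $\Phi_\epsilon$ that does not blow up as $\epsilon \to 0$ requires a delicate choice of test function and repeated maximum-principle arguments exploiting the product geometry. Once that core estimate is in hand, the translation between the geodesic equation and Monge--Amp\`ere, the limit argument, and the verification of the metric axioms are all essentially standard.
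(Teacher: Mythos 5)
The paper does not prove this statement; it is quoted verbatim from Chen \cite{MR1863016}, so there is no in-paper argument to compare against, and your proposal has to be judged against Chen's actual proof. Your outline of part (i) does match it: the geodesic equation becomes the degenerate homogeneous complex Monge--Amp\`ere equation on $M\times R$ (the Donaldson--Semmes observation), one solves the $\epsilon$-perturbed Dirichlet problem by continuity, and the entire difficulty is the uniform-in-$\epsilon$ bound on the mixed complex Hessian $\partial\bar\partial\Phi_\epsilon$ (which is all that ``$C^{1,1}$'' means here --- the full real Hessian bound is not part of Chen's theorem and was only obtained much later). Identifying that as the core estimate and proposing a Yau-type maximum-principle argument with a barrier adapted to the product structure is exactly right.

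Part (ii) has a genuine gap. First, the inequality $\|\varphi_0-\varphi_1\|_{L^\infty}\le C\,d(\varphi_0,\varphi_1)$ does not follow from ``integrating $\partial_t\Phi_\epsilon$ and applying Cauchy--Schwarz against $\omega_{\varphi(t)}^n$'': Cauchy--Schwarz converts the length $\int_0^1\bigl(\int_M\dot\varphi^2\,\omega_{\varphi(t)}^n\bigr)^{1/2}dt$ into a lower bound for an \emph{integral} quantity, and indeed Chen's non-degeneracy statement is an $L^1$/$L^2$-type bound such as $d(\varphi_0,\varphi_1)\ge V^{-1/2}\bigl(\int_{\{\varphi_0>\varphi_1\}}(\varphi_0-\varphi_1)^2\,\omega_{\varphi_0}^n\bigr)^{1/2}$, which already suffices for positivity; the sup-norm version is not what the argument yields. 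Second, and more importantly, the step you dispatch with ``the non-positive curvature makes the distance convex'' is the actual content of part (ii). The curvature formula is purely formal: the geodesics are only $C^{1,1}$, and a priori one does not know that the infimum of lengths is realized or controlled by them. Chen must prove directly --- by a second-variation/comparison argument carried out on the smooth $\epsilon$-approximate geodesics and then passed to the limit --- that the $C^{1,1}$ geodesic minimizes length among smooth competing paths (equivalently, that $\bigl|\tfrac{d}{dt}d(\psi,\varphi(t))\bigr|\le|\dot\varphi(t)|_{\varphi(t)}$ along smooth curves). Without that comparison lemma, symmetry and the triangle inequality for the infimum-of-lengths $d$ are trivial, but non-degeneracy is not established.
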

Later, Calabi and Chen proved $\mathcal{H}$ is negatively curved in the sense of Alexanderof in \cite{MR1969662}.
We denote the space of normalized K\"ahler potentials by
\begin{align}\label{SKM: H0}
\mathcal{H}_0
=\{\vphi\in C^{\infty}(M,R)\vert\om
+\sqrt{-1}\p\bar\p\vphi>0 \text{ and } I(\vphi)=0\},
\end{align}
where
\begin{align*}
I(\vphi)
&=\frac{1}{V}\sum_{p=0}^n\frac{1}{(p+1)!(n-p)!}
\int_M\vphi\om^{n-p} \wedge (\p\bar\p\vphi)^p.
\end{align*}
In fact,
$\mathcal{H}$ can be naturally split as
$$\mathcal{H}=\mathcal{H}_0\times \mathbf{R}.$$
It leads to the decomposition of the tangent space
$$T_\vphi=\{f\vert\int_Mf\om^n_\vphi=0\}\oplus\mathbf{R}.$$
On a K\"ahler-Einstein manifold $(M,\om)$, choose $\om$ be
the reference metric.
It is clear that $h_\om=0$ by the definition \eqref{ricci pot of back}.
Substituting this into the potential equation of K\"ahler-Ricci flow \eqref{KRF potential}, we obtain that
\begin{equation}\label{KRF potential ke}
\begin{cases}
\frac{\p\vphi}{\p t}&=\log\frac{\om^n_\vphi}{\om^n}+\vphi+a(t)\\
\vphi(0)&=\vphi_0.
\end{cases}
\end{equation}
Furthermore, we choose appropriate normalization constant
\begin{align}\label{a(t)}
a(t)=
-\frac{1}{V}\int_M(\log\frac{\om^n_\vphi}{\om^n}
+\vphi)\om^n_\vphi,
\end{align} then one obviously sees that
\begin{align}\label{normalized cond}
\p_tI(\vphi)=\frac{1}{V}\int_M\p_t\vphi\om_\vphi^n=0.
\end{align} We first assume $\vphi_0\in\mathcal{H}_0$ such that $I(\vphi_0)=0$, the general case will be treated in Section \ref{WF}. Then \eqref{normalized cond} implies $I(\vphi)=0$ which assures the solution $\vphi$ of \eqref{KRF potential ke}
always stays in $\mathcal{H}_0$.

For any $\vphi\in\mathcal{H}$, Mabuchi \cite{MR867064} defined the $K$-energy of $(M,\om)$ as follows
\begin{align}\label{K en}
\nu(\om,\om_\vphi)
&=-\frac{1}{V}
\int_0^1\int_{M}\dot\vphi(\tau)(S_{\vphi(\tau)}-\as)
\om^n_{\vphi(\tau)}d\tau\
\end{align}
where $\vphi(\tau)$ is an arbitrary piecewise smooth path from $0$ to
$\vphi$. Later on, the explicit expression
of the $K$-energy is given in Chen \cite{MR1772078} and Tian \cite{MR1787650} as
\begin{align}\label{K en im}
\nu_\om(\vphi)
&=\frac{1}{V}\int_M\log\frac{\om^n_\vphi}{\om^n}\om_\vphi^{n}
+\frac{\as n!}{V}I(\vphi)\nonumber\\
&-\frac{1}{V}\sum_{i=0}^{n-1}\frac{n!}{(i+1)!(n-i-1)!}\int_{M}\vphi
Ric\wedge\om^{n-1-i}\wedge(\p\bar\p\vphi)^{i}.
\end{align}
We will in later section simply denote $\nu(\vphi)$ instead of $\nu_\om(\vphi)$.
The second variation of the $K$-energy is given in Mabuchi \cite{MR909015}.
\begin{thm}(Mabuchi \cite{MR909015})
If $\om$ is a critical point of $\nu(\vphi)$, then the inequality
\begin{align}
\frac{d^2}{dt^2}\nu(\theta_t)|_{t=0}\geq0
\end{align} holds for every smooth path $\{\theta_t|-\eps\leq t\leq\eps\}$ in $\mathcal{K}$ such $\theta_0=\om$.
\end{thm}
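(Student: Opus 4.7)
The plan is to reduce the second variation of $\nu$ to the squared $L^2$-norm of the Lichnerowicz operator applied to the infinitesimal variation; nonnegativity is then immediate. Write $\theta_t=\om_{\vphi(t)}$ with $\vphi(0)=0$ and set $f=\dot\vphi(0)$. Differentiating the Mabuchi definition \eqref{K en} once in $t$ yields
\[
\frac{d}{dt}\nu(\theta_t)=-\frac{1}{V}\int_M\dot\vphi\,(S_{\vphi}-\as)\,\om^n_\vphi,
\]
so that the critical-point condition on $\om$ is exactly $S_\om=\as$, i.e.~$\om$ is cscK (and in the Fano normalization of the excerpt, K\"ahler--Einstein with $\as=n$).

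Differentiating once more and evaluating at $t=0$, both the $\ddot\vphi$ term and the term coming from $\frac{d}{dt}\om^n_{\vphi(t)}$ carry a factor of $S_\om-\as=0$ and drop out, leaving
\[
\frac{d^2}{dt^2}\nu(\theta_t)\Big|_{t=0}=-\frac{1}{V}\int_M f\,\Big(\frac{d}{dt}S_{\vphi}\Big|_{t=0}\Big)\,\om^n.
\]
Inserting the classical linearization $\frac{d}{dt}S_{\vphi}|_{t=0}=-\Delta^2 f-R^{i\bar j}f_{i\bar j}$ of the scalar curvature (with $\Delta$ and $R_{i\bar j}$ computed with respect to $\om$) and integrating by parts against $f$, the right-hand side becomes, up to a fixed positive normalization,
\[
\frac{d^2}{dt^2}\nu(\theta_t)\Big|_{t=0}=\frac{1}{V}\int_M|\mathcal{D}f|^2_\om\,\om^n\ \geq\ 0,
\]
where $\mathcal{D}f=\bar\p\nabla^{1,0}f$ is the Lichnerowicz operator of $(M,\om)$. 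Here one uses the identity $\mathcal{D}^*\mathcal{D}f=\Delta^2 f+R^{i\bar j}f_{i\bar j}+\langle\nabla S_\om,\nabla f\rangle$ (up to conventions), and the last term vanishes because $S_\om=\as$ is constant at the critical point.

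The only delicate step is the third one: fixing conventions (sign of the Laplacian, factor of $2$ in $\mathcal{D}^*\mathcal{D}$, placement of $\sqrt{-1}$) so that the integration by parts genuinely produces $\|\mathcal{D}f\|^2$ up to a positive constant. Once that is nailed down, the inequality is algebraic, and as a bonus one reads off that equality forces $\mathcal{D}f=0$, i.e.~$\nabla^{1,0}f$ is a holomorphic vector field. This characterization of the kernel is the obstruction that $\mathcal{H}_0$ accommodates via the automorphism action used later in the paper to define the projection onto $\mathcal{E}_0$.
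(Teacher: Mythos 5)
The paper does not prove this statement; it is quoted directly from Mabuchi's paper \cite{MR909015}, so there is no in-text argument to compare against. Your proposal is correct and is in substance Mabuchi's original proof: the first variation identifies critical points with cscK metrics, the terms in the second variation proportional to $S_\om-\as$ drop at such a point, and the surviving term $-\frac{1}{V}\int_M f\,\delta S(f)\,\om^n$ becomes $\frac{1}{V}\int_M|\mathcal{D}f|^2\,\om^n\geq 0$ after integrating by parts, using that the gradient term $\langle\nabla S_\om,\nabla f\rangle$ in the Lichnerowicz identity vanishes when $S_\om$ is constant. The only caveat is the one you flag yourself (sign and normalization conventions in $\delta S$ and $\mathcal{D}^*\mathcal{D}$), which is routine; your identification of the kernel with holomorphy potentials is also the correct refinement and is exactly what the paper later exploits via the $Aut_0(M)$-orbit $\mathcal{E}_0$.
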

Let $Aut(M)$ be the
group of holomorphic automorphisms of $M$ and $Aut_0(M)$ be its identity component.
Bando-Mabuchi \cite{MR946233} and Bando \cite{MR887939} further showed that
\begin{thm}(Bando-Mabuchi \cite{MR946233}, Bando \cite{MR887939})\label{BM en}
Assume $\mathcal{E}\neq \phi$. Then
\begin{enumerate}
\renewcommand{\labelenumi}{(\roman{enumi})}
\item $K$-energy is bounded from below on $\mathcal{K}$ and takes its absolute minimum exactly on $\mathcal{E}$.
\item $\mathcal{E}$ consists a single $Aut_0(M)$ orbit.
\end{enumerate}
\end{thm}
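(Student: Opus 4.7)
The plan is to combine the structural facts already recorded in the paper (Mabuchi's second-variation result, Chen's $C^{1,1}$ geodesic theorem, and Mabuchi's explicit expression for the $K$-energy) with the convexity of $\nu$ along geodesics in $\mathcal{H}$. Concretely, along any smooth geodesic $\vphi(t)$ the geodesic equation $\vphi''=\tfrac12|\nabla\dot\vphi|^2_{\om_\vphi}$ together with \eqref{K en} and integration by parts gives the Mabuchi identity
\begin{equation*}
\frac{d^2}{dt^2}\nu(\vphi(t))=\int_M|\bar\p\nabla\dot\vphi(t)|^2_{\om_{\vphi(t)}}\,\om^n_{\vphi(t)}\geq 0,
\end{equation*}
so $\nu$ is convex along smooth geodesics, and after a regularization this extends to the $C^{1,1}$ geodesics produced by Chen's theorem.

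For part (i), fix $\om\in\mathcal{E}$ and any $\om_\vphi\in\mathcal{K}$, and let $\vphi(t)$, $t\in[0,1]$, be the $C^{1,1}$ geodesic joining them in $\mathcal{H}_0$. The K\"ahler--Einstein condition forces $\om$ to be a critical point of $\nu$, so $\frac{d}{dt}\nu(\vphi(t))|_{t=0}=0$; convexity then yields $\nu(\om_\vphi)\geq\nu(\om)$, which shows $\nu$ is bounded below by its common value on $\mathcal{E}$ and that any minimizer must lie in $\mathcal{E}$, since equality in the convexity bound will force the flat direction along the geodesic to be holomorphic (as in (ii)) and hence the endpoint to be K\"ahler--Einstein.

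For part (ii), take $\om,\om'\in\mathcal{E}$, let $\vphi(t)$ be the $C^{1,1}$ geodesic between them, and note that by (i) both endpoints are global minima of $\nu$; convexity then forces $\nu$ to be constant along the geodesic. The second-variation identity implies $\bar\p\nabla\dot\vphi(t)\equiv 0$ in the appropriate weak sense, so $X_t:=\nabla\dot\vphi(t)$ is a holomorphic vector field on $M$. Integrating the real part of $X_t$ in $t$ produces a one-parameter family of biholomorphisms $\sigma_t\in\Aut_0(M)$ satisfying $\sigma_t^*\om_{\vphi(t)}=\om$; specializing to $t=1$ places $\om'$ on the $\Aut_0(M)$-orbit of $\om$.

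The main obstacle is that the $C^{1,1}$ regularity of Chen's geodesic is just barely insufficient to read off the Mabuchi identity or the holomorphicity of $\nabla\dot\vphi$ pointwise. I would handle this by approximating $\vphi(t)$ by smooth $\varepsilon$-geodesics solving a Monge--Amp\`ere equation with small positive right-hand side, verifying the Mabuchi identity and a uniform convexity bound on each approximate geodesic, then passing $\varepsilon\to 0$ using the uniform $C^{1,1}$ estimates. An alternative, closer to Bando--Mabuchi's original strategy, replaces the geodesic by the continuity-method family $(\om+i\p\bar\p\vphi_s)^n=e^{h_\om-s\vphi_s+c_s}\om^n$, $s\in[0,1]$, and extracts the holomorphic vector field from the kernel of the linearized Monge--Amp\`ere operator at the first $s$ where uniqueness fails; in either approach the technical heart is the regularity theory needed to make the convexity argument rigorous.
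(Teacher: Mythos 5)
First, note that the paper does not prove this theorem: it is imported verbatim from Bando--Mabuchi and Bando, so there is no internal proof to compare your proposal against. Judged on its own terms, your geodesic-convexity strategy is a legitimate modern route to both statements, but it has two genuine gaps that cannot be absorbed into ``after a regularization.'' (a) Convexity of $\nu$ along Chen's $C^{1,1}$ geodesics is not a soft limit of the smooth-geodesic identity: along an $\varepsilon$-geodesic the second derivative of $\nu$ acquires an error term of the schematic form $\varepsilon\int_M f\,(S_{\vphi}-\underline{S})\,\om_\vphi^n$, and $S_\vphi$ involves four derivatives of a potential that is only uniformly $C^{1,1}$, so this term is not controlled as $\varepsilon\to 0$. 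This is precisely why convexity of the $K$-energy along weak geodesics remained open until Berman--Berndtsson and Chen--Li--P\u{a}un, long after both cited papers; it cannot be assumed as an ingredient in a proof of a 1987 theorem. (b) In part (ii), even granting that $\nu$ is affine along the connecting geodesic, extracting a holomorphic field from $\bar\p\nabla\dot\vphi=0$ ``in the appropriate weak sense'' and integrating it to biholomorphisms $\sigma_t$ with $\sigma_t^*\om_{\vphi(t)}=\om$ requires upgrading the regularity of the degenerate geodesic (a priori $\dot\vphi$ is only Lipschitz, so $\nabla\dot\vphi$ is merely $L^\infty$ and its $t$-dependence is uncontrolled); this regularity bootstrap is the technical heart of the Berman--Berndtsson uniqueness proof and is not supplied here. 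A smaller point: in (i), the fact that every minimizer lies in $\mathcal{E}$ does not need the rigidity of the flat direction at all --- a smooth minimizer in $\mathcal{K}$ is a critical point of $\nu$, hence cscK, hence K\"ahler--Einstein since the class is $2\pi C_1(M)$.

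The continuity-method alternative you mention in your last sentence is in fact the actual Bando--Mabuchi argument and is the one that works with the technology available in the cited references: one deforms $(\om+\sqrt{-1}\p\bar\p\vphi_s)^n=e^{h_\om-s\vphi_s}\om^n$ backwards from $s=1$, establishes openness and closedness of the solvable set down to $s=0$ after normalizing against the first eigenspace $\Lambda_1(\om)$ of the Laplacian (where invertibility of $\tri+s$ can fail only at $s=1$), and reads off both the lower bound of $\nu$ and uniqueness modulo $Aut_0(M)$ from the monotonicity of $\nu$ and of $I-J$ along this family. I would either carry that argument out in full or leave the statement as a citation, as the paper does.
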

Indeed the normalization constant $a(t)$ can be estimated by the $K$-energy.
\begin{lem}Let $\vphi$ be the solution of \eqref{KRF potential ke}.
The relation between $a(t)$ and the $K$-energy $\nu(\vphi)$ is given by
\begin{align}\label{a}
a(t)+\nu(\vphi)=a(0)+\nu(\vphi_0).
\end{align}
\end{lem}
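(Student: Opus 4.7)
The plan is to show $\frac{d}{dt}\bl a(t)+\nu(\vphi(t))\br=0$ and then integrate from $0$ to $t$. The two derivatives will each turn out to equal $\pm\frac{1}{V}\int_M|\nabla\dot\vphi|^2\om^n_\vphi$ and cancel, so the key is to identify these Dirichlet-energy expressions cleanly.

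First I would differentiate $\nu(\vphi(t))$ using Mabuchi's definition \eqref{K en}, which gives at once
\begin{equation*}
\frac{d}{dt}\nu(\vphi(t))=-\frac{1}{V}\int_M\dot\vphi(S_{\vphi}-n)\om^n_\vphi.
\end{equation*}
To rewrite the integrand, I would apply $-\sqrt{-1}\p\bar\p$ to the flow equation \eqref{KRF potential ke}. Using $-\sqrt{-1}\p\bar\p\log(\om^n_\vphi/\om^n)=Ric(\om_\vphi)-Ric(\om)$ together with $Ric(\om)=\om$ in the K\"ahler--Einstein setting, one obtains the pointwise identity $Ric(\om_\vphi)=\om_\vphi-\sqrt{-1}\p\bar\p\dot\vphi$. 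Wedging with $\om_\vphi^{n-1}$ and using $S_\vphi\om^n_\vphi=nRic(\om_\vphi)\wedge\om^{n-1}_\vphi$ yields $(S_\vphi-n)\om^n_\vphi=-\Delta_\vphi\dot\vphi\cdot\om^n_\vphi$, so after integration by parts
\begin{equation*}
\frac{d}{dt}\nu(\vphi(t))=\frac{1}{V}\int_M\dot\vphi\,\Delta_\vphi\dot\vphi\,\om^n_\vphi=-\frac{1}{V}\int_M|\nabla\dot\vphi|^2_{g_\vphi}\om^n_\vphi.
\end{equation*}

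Next I would differentiate $a(t)$ from its defining formula \eqref{a(t)}. Writing $h:=\log(\om^n_\vphi/\om^n)$ and using $\p_t\om^n_\vphi=\Delta_\vphi\dot\vphi\,\om^n_\vphi$ and $\dot h=\Delta_\vphi\dot\vphi$, a direct computation combined with integration by parts gives
\begin{equation*}
\frac{d}{dt}\int_M(h+\vphi)\om^n_\vphi=\int_M\Delta_\vphi(h+\vphi)\cdot\dot\vphi\,\om^n_\vphi+\int_M\dot\vphi\,\om^n_\vphi.
\end{equation*}
By the normalization \eqref{normalized cond}, $\int_M\dot\vphi\,\om^n_\vphi=0$, and since $h+\vphi=\dot\vphi-a(t)$ (with $a(t)$ a spatial constant), $\Delta_\vphi(h+\vphi)=\Delta_\vphi\dot\vphi$. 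Another integration by parts then yields
\begin{equation*}
\dot a(t)=-\frac{1}{V}\int_M\Delta_\vphi\dot\vphi\cdot\dot\vphi\,\om^n_\vphi=\frac{1}{V}\int_M|\nabla\dot\vphi|^2_{g_\vphi}\om^n_\vphi.
\end{equation*}

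Adding the two identities gives $\frac{d}{dt}(a(t)+\nu(\vphi(t)))=0$, and integrating from $0$ to $t$ yields \eqref{a}. The only substantive step is the identification $Ric(\om_\vphi)-\om_\vphi=-\sqrt{-1}\p\bar\p\dot\vphi$, which depends crucially on choosing $\om$ to be K\"ahler--Einstein so that the Ricci potential $h_\om$ vanishes; everything else is bookkeeping with integration by parts and the normalization \eqref{a(t)}. I do not anticipate a real obstacle beyond keeping signs and normalization conventions consistent.
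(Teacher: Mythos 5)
Your proposal is correct and follows essentially the same route as the paper: differentiate $a(t)$ from \eqref{a(t)}, use Stokes' theorem, the normalization \eqref{normalized cond} and the flow equation to identify $\dot a(t)$ with $-\frac{d}{dt}\nu(\vphi)$, then integrate in $t$. The only cosmetic difference is that you push both derivatives to the common form $\pm\frac{1}{V}\int_M|\nabla\dot\vphi|^2\om^n_\vphi$, whereas the paper stops at the intermediate expression $\int_M(S_\vphi-n)\dot\vphi\,\om^n_\vphi$; the two are equal by the identity $\tri_\vphi\dot\vphi=-(S_\vphi-n)$ that you also derive.
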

\begin{proof}
We calculate the evolution of $a(t)$ along the K\"ahler-Ricci flow directly,
\begin{align*}
V\frac{d}{dt}a(t)
&=-\int_M(\tri_\vphi+1)\dot\vphi\om^n_\vphi
-\int_M(\log\frac{\om^n_\vphi}{\om^n}+\vphi)
\tri_\vphi\dot\vphi\om^n_\vphi.
\end{align*}
According to the Stokes' theorem and \eqref{normalized cond} the first term vanishes identically. Meanwhile, by using the integration-by-part formula and \eqref{KRF potential ke}, the second term becomes
\begin{align*}
\int_M(S_\vphi-n)\dot\vphi\om^n_\vphi.
\end{align*}
Since \eqref{K en} implies $$\frac{d}{dt}\nu(t)
=-\frac{1}{V}\int_M(S_\vphi-n)\dot\vphi\om^n_\vphi,$$ we obtain
\begin{align}\label{pt a}
\frac{d}{dt}a(t)=-\frac{d}{dt}\nu(t).
\end{align}
Thus, the assertion follows
by integrating both sides of \eqref{pt a} with respect to $t$.
\end{proof}
Since the $K$-energy is decreasing along the K\"ahler-Ricci flow, according to \thmref{BM en} we immediately conclude that:
\begin{cor}\label{a(t) bdd}
On a K\"ahler-Einstein manifold,
$a(t)$ is uniformly bounded along the K\"ahler-Ricci flow.
\end{cor}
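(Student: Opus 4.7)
The plan is to combine the identity \eqref{a} from the preceding lemma with the two properties of the $K$-energy recorded in \thmref{BM en}. Writing the identity as
\begin{equation*}
a(t) = a(0) + \nu(\vphi_0) - \nu(\vphi(t)),
\end{equation*}
the problem reduces to bounding $\nu(\vphi(t))$ both above and below along the flow.

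For the lower bound on $a(t)$ (i.e.\ an upper bound on $\nu(\vphi(t))$), I would use the fact already noted in the derivation of \eqref{pt a}, namely $\frac{d}{dt}\nu(\vphi(t)) = -\frac{1}{V}\int_M (S_\vphi - n)\dot\vphi\,\om^n_\vphi$. Along the K\"ahler-Ricci flow \eqref{KRF potential ke} one computes $\dot\vphi = \log(\om_\vphi^n/\om^n) + \vphi + a(t)$ and $S_\vphi - n = -\tri_\vphi \dot\vphi$, so integration by parts produces $\frac{d}{dt}\nu(\vphi(t)) = -\frac{1}{V}\int_M |\nabla \dot\vphi|^2_{\om_\vphi}\,\om^n_\vphi \leq 0$. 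Therefore $\nu(\vphi(t)) \leq \nu(\vphi_0)$, which immediately yields $a(t) \geq a(0)$.

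For the upper bound on $a(t)$ (i.e.\ a lower bound on $\nu(\vphi(t))$), I would invoke part (i) of \thmref{BM en}: since $M$ is K\"ahler-Einstein, $\mathcal{E} \neq \emptyset$, and hence the $K$-energy is bounded from below on $\mathcal{K}$ by some constant $C = \inf_{\mathcal{K}} \nu > -\infty$. Substituting into the identity gives
\begin{equation*}
a(0) \;\leq\; a(t) \;\leq\; a(0) + \nu(\vphi_0) - C,
\end{equation*}
which is the desired uniform bound.

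There is essentially no obstacle here once the two inputs are in place; the corollary is a clean bookkeeping consequence of the energy identity \eqref{a} together with Bando-Mabuchi's lower bound on the $K$-energy. The only small point to double-check is that the monotonicity statement used for the lower bound on $a(t)$ is compatible with the normalization \eqref{a(t)}, which is exactly what \eqref{normalized cond} guarantees, so the integration by parts produces no boundary terms and the sign works out.
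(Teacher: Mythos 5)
Your argument is correct and is exactly the paper's: the identity $a(t)+\nu(\vphi(t))=a(0)+\nu(\vphi_0)$ from the preceding lemma, combined with the monotone decrease of the $K$-energy along the flow and Bando--Mabuchi's lower bound from \thmref{BM en}(i), gives the two-sided bound on $a(t)$. The paper states this in one sentence; your write-up just makes the bookkeeping explicit.
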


The following theorems including the
short time existence, the regularity and the continuous dependence on initial data of
the K\"ahler-Ricci flow have been proved in Chen-Ding-Zheng \cite{zheng-2009}, in which
they defined a new second order Monge-Amp\`{e}re flow
called pseudo-Calabi flow
\begin{equation}\label{PCF}
  \left\{
   \begin{aligned}
   {{\partial \varphi}\over {\partial t}}&= -f(\varphi),  \\
   \triangle_\varphi f(\varphi) &= S(\varphi) - \ul S.  \\
   \end{aligned}
  \right.
\end{equation} The pseudo-Calabi flow coincides with the K\"ahler-Ricci flow,
when the initial datum is restricted in the canonical K\"ahler class.
Let $X=C^0([0,T),C^{2+\a}(M,g))\cap
C^1([0,T),C^{\a}(M,g))$.
\begin{thm}(Chen-Ding-Zheng \cite{zheng-2009})\label{short time: main}
Let $\vphi_0\in C^{2,\a}(M,g)$ be such that $\l \om
\leq \om_{\vphi_0}\leq \L \om$ for two positive constants $\l$ and $\L$.
Then
the pseudo-Calabi flow
has a unique solution $\vphi(x,t)\in X$,
where $T$ is the maximal existence time.
\end{thm}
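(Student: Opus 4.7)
The plan is to produce the short-time solution by a Banach fixed-point argument in the parabolic H\"older space $X_T = C^0([0,T],C^{2,\a}(M))\cap C^1([0,T],C^{\a}(M))$ for sufficiently small $T>0$, and then extend to the maximal existence time. The essential preliminary observation is that, although $S(\vphi)$ formally carries four derivatives of $\vphi$, the coupling $\tri_\vphi f = S(\vphi) - \ul S$ conceals a crucial cancellation. Set $F(\vphi) := \log(\om_\vphi^n/\om^n)$; using $-\p_i\p_{\bar j}\log\det g = R_{i\bar j}(\om)$, a direct computation gives $S(\vphi) = \mathrm{tr}_{\om_\vphi} Ric(\om) - \tri_\vphi F(\vphi)$, so that $u := f + F$ satisfies the second-order elliptic equation
\begin{equation*}
\tri_\vphi u = \mathrm{tr}_{\om_\vphi} Ric(\om) - \ul S,
\end{equation*}
whose right-hand side depends on $\vphi$ only through its two-jet. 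Since $\l\om \leq \om_\vphi \leq \L\om$ makes $\tri_\vphi$ uniformly elliptic with $C^{\a}$ coefficients, standard elliptic Schauder theory provides $\|u\|_{C^{2,\a}} \leq C(\|\vphi\|_{C^{2,\a}},\l,\L)$ along with Lipschitz dependence on $\vphi$; hence $f = u - F$ defines a continuous, locally Lipschitz map from $C^{2,\a}(M)$ into $C^{\a}(M)$.

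With $f(\vphi)$ understood in this way as a $C^{\a}$-valued nonlinearity of $\vphi \in C^{2,\a}$, I would recast $\p_t\vphi = -f(\vphi)$ in a form that exhibits its parabolic character. Expanding $-f(\vphi) = F(\vphi) - u(\vphi) = \log(\om_\vphi^n/\om^n) - u(\vphi)$, one sees that the principal symbol of the right-hand side in $\vphi$ is that of $\tri_\vphi$, so the flow is a strictly parabolic quasilinear Monge-Amp\`ere-type equation. Concretely, I would define a map $\Psi$ on a small closed ball of $X_T$ centered at the constant path $\vphi \equiv \vphi_0$ by letting $\Psi(\tilde\vphi) = \vphi$ be the unique solution of the \emph{linear} problem $\p_t \vphi = -f(\tilde\vphi)$ with $\vphi(0) = \vphi_0$. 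Direct time integration gives $\vphi \in C^1([0,T],C^{\a})$; the remaining $C^0([0,T],C^{2,\a})$ regularity is recovered by treating the same equation as a linear parabolic equation with $C^{\a}$ coefficients obtained by freezing the principal part at $\vphi_0$, and applying the standard parabolic Schauder estimate of Ladyzhenskaya-Solonnikov-Ural'tseva.

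Shrinking $T$ so that both the chosen ball radius and the uniform ellipticity $\tfrac12\l\om \leq \om_\vphi \leq 2\L\om$ are preserved along the iteration, one checks that $\Psi$ sends the ball into itself and contracts in the weaker norm of $C^0([0,T],C^{\a})$. Banach's fixed point theorem then furnishes a unique solution $\vphi \in X_T$ of \eqref{PCF}, and uniqueness in $X$ on any common existence interval follows from the same contraction/Gr\"onwall estimates applied to the difference of two solutions. The main obstacle in this scheme is precisely the hidden fourth-order nonlinearity in $S(\vphi)$: without the cancellation producing the second-order elliptic equation for $u = f + F$, one cannot even place $f(\vphi)$ in $C^{\a}$ when $\vphi$ is only $C^{2,\a}$, and no Schauder-based fixed point argument can be set up at this regularity. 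The secondary technical issue, preserving uniform ellipticity along the iteration, is standard and handled by choosing $T$ in terms of $\|\vphi_0\|_{C^{2,\a}}$, $\l$, and $\L$.
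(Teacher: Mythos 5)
First, a remark on scope: the paper does not prove this theorem at all --- it is quoted from Chen--Ding--Zheng \cite{zheng-2009} as a black box --- so there is no in-paper proof to compare against. Judged on its own terms, your central observation is correct and is precisely the viewpoint behind the cited work: since $Ric(\om_\vphi)=Ric(\om)-\sqrt{-1}\p\bar\p\log(\om_\vphi^n/\om^n)$, one has $S(\vphi)=\mathrm{tr}_{\om_\vphi}Ric(\om)-\tri_\vphi F$ with $F=\log(\om_\vphi^n/\om^n)$, so $u=f+F$ solves a second-order Poisson equation and the flow becomes $\p_t\vphi=F(\vphi)-u(\vphi)$, a second-order parabolic Monge--Amp\`ere flow; this is exactly why \eqref{PCF} is described in Section \ref{Bg} as a ``second order Monge--Amp\`ere flow''. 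You should also record the solvability and normalization of the Poisson step ($\int_M(\mathrm{tr}_{\om_\vphi}Ric(\om)-\ul S)\om_\vphi^n=0$ by cohomology, and $u$ is determined only up to an additive constant), but that is minor.

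The genuine gap is in the fixed-point scheme. As literally defined, $\Psi(\tilde\vphi)$ solves $\p_t\vphi=-f(\tilde\vphi)$ with $\vphi(0)=\vphi_0$, which is an ODE in $t$ with a prescribed $C^{\a}$-valued right-hand side; hence $\Psi(\tilde\vphi)(t)=\vphi_0-\int_0^tf(\tilde\vphi)\,ds$ lies only in $C^{\a}$ in space, and there is no parabolic smoothing from which to ``recover'' $C^{2,\a}$. The subsequent appeal to parabolic Schauder theory applies to a \emph{different} map, namely one in which the principal part acts on the unknown, e.g. $\p_t\vphi-\tri_{\tilde\vphi}\vphi=F(\tilde\vphi)-\tri_{\tilde\vphi}\tilde\vphi-u(\tilde\vphi)$; you must commit to that version. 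Second, the contraction cannot be run in $C^0([0,T],C^{\a})$: the difference $F(\tilde\vphi_1)-F(\tilde\vphi_2)=\int_0^1\tri_{(1-s)\tilde\vphi_2+s\tilde\vphi_1}(\tilde\vphi_1-\tilde\vphi_2)\,ds$, as well as $u(\tilde\vphi_1)-u(\tilde\vphi_2)$ (through the coefficients of $\tri_{\tilde\vphi}$), is controlled by the $C^{2}$ norm of $\tilde\vphi_1-\tilde\vphi_2$, not its $C^{\a}$ norm, so the map is not Lipschitz in the weak norm you propose and the Banach iteration does not close. The standard repair is to contract in $C^0([0,T],C^{2,\a'})$ for some $\a'<\a$, keeping the invariant ball in the $C^{2,\a}$ topology and extracting the smallness factor from $T$ by interpolation; alternatively one can apply the implicit function theorem in parabolic H\"older spaces to $\vphi\mapsto\p_t\vphi-F(\vphi)+u(\vphi)$, whose linearization at the constant path is the invertible model operator $\p_t-\tri_{\vphi_0}-1$ plus compact terms. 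With either correction the argument closes, and uniqueness and continuation to the maximal time follow as you indicate.
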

\begin{thm}(Chen-Ding-Zheng \cite{zheng-2009})\label{regularity of PCF: reg of PCF}
The solution of the pseudo-Calabi flow $\vphi\in X$ is smooth for any $t>0$.
\end{thm}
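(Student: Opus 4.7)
The plan is a standard parabolic bootstrap argument, starting from the $C^{2,\alpha}$ regularity provided by \thmref{short time: main} and iteratively gaining one derivative at each step by alternating between the elliptic equation determining $f$ from $\vphi$ and the parabolic equation for $\vphi$ itself. The guiding observation is that, once $f$ is eliminated, the pseudo-Calabi flow becomes a quasilinear parabolic Monge-Amp\`ere equation with a nonlocal lower-order perturbation whose regularity always exceeds that of the leading term.

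First I would decouple $f$ from $\vphi$ by exploiting the identity $S(\vphi) = -\tri_\vphi \log(\om^n_\vphi/\om^n) + \text{tr}_{\om_\vphi}Ric(\om)$, which recasts the second equation of \eqref{PCF} as
$$\tri_\vphi\Bigl[f + \log\tfrac{\om^n_\vphi}{\om^n}\Bigr] = \text{tr}_{\om_\vphi}Ric(\om) - \ul{S}.$$
On $[t_0, T-\ep]$ with $t_0 > 0$, \thmref{short time: main} supplies a uniform bound $\lambda \om \leq \om_\vphi \leq \Lambda \om$ with $g_\vphi \in C^\alpha$, so the right-hand side lies in $C^\alpha$, the operator $\tri_\vphi$ is uniformly elliptic with $C^\alpha$ coefficients, and elliptic Schauder yields $w := f + \log(\om^n_\vphi/\om^n) \in C^{2,\alpha}$ after the additive constant is fixed by the normalization of $f$. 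The flow then takes the form
$$\p_t \vphi = \log\frac{\om^n_\vphi}{\om^n} - w(\vphi),$$
where $w(\vphi)$ depends on $\vphi$ nonlocally but carries two spatial derivatives more than $\vphi$.

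Next I would differentiate this rewritten equation in a local spatial direction $\p_k$. Writing $u = \p_k \vphi$, a direct computation shows that $u$ satisfies a linear, uniformly parabolic equation of the form $\p_t u = g_\vphi^{i\bar j}\p_i\p_{\bar j} u + b^k\p_k u + cu + F$, whose coefficients and source lie in $C^\alpha$. The interior parabolic Schauder estimates then give $u \in C^{2,\alpha}$, i.e.~$\vphi \in C^{3,\alpha}$, on a slightly shrunken interval $[t_0 + \delta, T-\ep]$. Iterating the two upgrades — first $w \in C^{k+2,\alpha}$ from $\vphi \in C^{k+2,\alpha}$ via the elliptic equation, then $\vphi \in C^{k+3,\alpha}$ from parabolic Schauder — produces $\vphi \in C^{k,\alpha}$ for every $k$ on $(0,T)$; a parallel argument with $\p_t$ in place of $\p_k$ upgrades the time regularity in step, giving $\vphi \in C^\infty(M \times (0,T))$.

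The main technical obstacle is ensuring that the nonlocal term $w$ never caps the iteration. This is resolved by observing that $\tri_\vphi w = \text{tr}_{\om_\vphi}Ric(\om) - \ul{S}$ always gains two derivatives over $g_\vphi$: if $\vphi \in C^{k+2,\alpha}$ then both the coefficients and the source lie in $C^{k,\alpha}$, so Schauder gives $w \in C^{k+2,\alpha}$, exactly matching the regularity of $\log(\om^n_\vphi/\om^n)$. Hence the nonlocality never dominates the Monge-Amp\`ere bottleneck, and the bootstrap propagates to all orders for every $t > 0$.
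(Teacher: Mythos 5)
This theorem is imported verbatim from Chen--Ding--Zheng \cite{zheng-2009}; the present paper contains no proof of it, so there is nothing internal to compare your argument against. On its own terms, your elliptic--parabolic bootstrap is the standard route to such a regularity statement and is correct in outline: the identity $S(\vphi)=\mathrm{tr}_{\om_\vphi}Ric(\om)-\tri_\vphi\log(\om^n_\vphi/\om^n)$ is right, it does convert the constraint equation into $\tri_\vphi w=\mathrm{tr}_{\om_\vphi}Ric(\om)-\ul S$ for $w=f+\log(\om^n_\vphi/\om^n)$, and the observation that $w$ always carries two more spatial derivatives than the Monge--Amp\`ere term is exactly what makes the nonlocal coupling harmless, so the iteration closes.

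Two points need to be shored up before the argument is airtight. First, interior parabolic Schauder requires the coefficients $g_\vphi^{i\bar j}$ and the source to be H\"older in the \emph{parabolic} sense, i.e.\ $C^{\a,\a/2}$ in space--time, whereas the space $X=C^0([0,T),C^{2+\a})\cap C^1([0,T),C^{\a})$ only gives continuity in $t$ with values in $C^{2,\a}$ together with Lipschitz dependence in $t$ with values in $C^{\a}$; you should note that interpolating between these two pieces of information yields temporal H\"older continuity of $\p\bar\p\vphi$ in $C^0$ (indeed in $C^{\a'}$ for $\a'<\a$), which is what licenses the Schauder step. Second, at the first rung of the ladder $\vphi$ has only two spatial derivatives, so the differentiated equation for $u=\p_k\vphi$ cannot be written classically; one should either work with difference quotients or interpret $u$ as a weak solution and upgrade. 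Both are routine repairs, and with them your proof is a legitimate self-contained substitute for the citation.
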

\begin{thm}(Chen-Ding-Zheng \cite{zheng-2009})\label{con dpd: main}
If $\phi$ is the solution of the pseudo-Calabi flow for initial datum $\phi_0$ on $[0,T]$, then there is a neighborhood $U$ of $\phi_0$ such that the pseudo-Calabi flow has a solution $\vphi(t)$ on $[0,T]$ for any $\vphi_0\in U$ and the mapping $\vphi_0\mapsto\vphi(t)$ is $C^k$ for $k=0,1,2,\ldots$
\end{thm}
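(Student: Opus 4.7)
The plan is to encode the pseudo-Calabi flow as a nonlinear operator equation between Banach spaces and appeal to the implicit function theorem, building on the linear estimates already developed for \thmref{short time: main}. Let $X = C^0([0,T], C^{2,\alpha}(M,g)) \cap C^1([0,T], C^\alpha(M,g))$ and $Y = C^0([0,T], C^\alpha(M,g))$, and define
\[
\mathcal{F}: X \to Y \times C^{2,\alpha}(M,g), \quad \mathcal{F}(\vphi) = \bl \partial_t \vphi + f(\vphi),\, \vphi(0) \br,
\]
where $f(\vphi)$ is the unique zero-mean solution (with respect to $\om_\vphi^n$) of $\triangle_\vphi f = S(\vphi) - \underline{S}$. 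The given solution $\phi$ satisfies $\mathcal{F}(\phi) = (0,\phi_0)$, so continuous $C^k$-dependence will follow once $\mathcal{F}$ is known to be Fr\'echet smooth with $D\mathcal{F}(\phi)$ an isomorphism of Banach spaces; the sought solution map is then the composition of $\mathcal{F}^{-1}$ with the slice $\vphi_0 \mapsto (0,\vphi_0)$.

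First I would check that $\mathcal{F}$ is Fr\'echet smooth. This reduces to smoothness of $\vphi \mapsto f(\vphi)$, which itself follows from the implicit function theorem applied to the auxiliary elliptic relation $\triangle_\vphi f - S(\vphi) + \underline{S} = 0$: on zero-mean H\"older functions $\triangle_\vphi$ is a Banach space isomorphism whenever $\om_\vphi$ is a K\"ahler metric, and the nonlinearities coming from the Monge-Amp\`{e}re determinant and the scalar curvature are rational in $\vphi$ and its derivatives up to order two. Second, I would compute the linearization by differentiating the defining equation for $f$, obtaining
\[
Df|_\phi(\psi) = \triangle_\phi^{-1}\bl DS|_\phi(\psi) - D\triangle|_\phi(\psi) \cdot f(\phi) \br,
\]
whose principal part is $-\triangle_\phi \psi$ since the top-order piece of $DS$ is $\triangle_\phi^2$, inverted by $\triangle_\phi^{-1}$. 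Hence $D\mathcal{F}(\phi)(\psi) = (\partial_t \psi + L_t \psi,\,\psi(0))$ with $L_t$ a time-dependent second-order elliptic operator depending smoothly on $t \in [0,T]$, thanks to \thmref{regularity of PCF: reg of PCF}.

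Third, I would verify that the linearized initial value problem $\partial_t \psi + L_t \psi = h$, $\psi(0) = \psi_0$ is uniquely solvable in $X$ for every $(h,\psi_0) \in Y \times C^{2,\alpha}$. Since $L_t$ is uniformly elliptic with H\"older coefficients, this is a standard linear parabolic Schauder problem, and in fact falls directly under the framework used in \cite{zheng-2009} to prove \thmref{short time: main}. Consequently $D\mathcal{F}(\phi)$ is a Banach space isomorphism, and the implicit function theorem produces a neighborhood $V$ of $(0,\phi_0)$ in $Y \times C^{2,\alpha}$ and a Fr\'echet smooth inverse $\mathcal{F}^{-1}: V \to X$. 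Restricting to the slice $\{0\} \times C^{2,\alpha}$ gives the claimed neighborhood $U$ of $\phi_0$ and a smooth solution map $U \ni \vphi_0 \mapsto \vphi \in X$. Because every step preserves real-analyticity (the nonlinearities and the elliptic inversion are analytic between the relevant H\"older spaces), the solution map is in fact real-analytic, hence $C^k$ for every $k$.

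The main obstacle is the nonlocal correction $\triangle_\phi^{-1}\bl D\triangle|_\phi(\cdot) \cdot f(\phi)\br$ hidden inside $L_t$: it is of lower order in $\psi$ but involves the global operator $\triangle_\phi^{-1}$, so local Schauder theory does not apply verbatim. A clean way around this is to linearize the coupled system $(\vphi,f)$ rather than the eliminated scalar equation, keeping all operators local; the cost is a slightly larger Banach space on which $(\vphi,f)$ lives, but the linearized problem then fits directly into the parabolic estimates used for \thmref{short time: main}, and the $C^\infty$ dependence on initial data follows at once.
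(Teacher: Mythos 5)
This theorem is quoted from Chen--Ding--Zheng \cite{zheng-2009}; the present paper contains no proof of it, so your proposal can only be measured against the strategy of the cited source, which is indeed the implicit-function-theorem-on-path-spaces argument you outline. The overall architecture (encode the flow as $\mathcal{F}(\vphi)=(\p_t\vphi+f(\vphi),\vphi(0))$, show $\mathcal{F}$ is Fr\'echet smooth, show the linearization is a parabolic isomorphism, invert) is the right one and matches how the well-posedness results \thmref{short time: main}--\thmref{con dpd: main} are obtained there.

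There is, however, a concrete gap in the way you set this up. You assert that the nonlinearities are ``rational in $\vphi$ and its derivatives up to order two,'' but $S(\vphi)$ involves \emph{fourth} derivatives of $\vphi$, so for $\vphi\in C^0([0,T],C^{2,\a})$ the expression $\triangle_\vphi f=S(\vphi)-\ul S$ is not even defined, and neither is your formula $Df|_\phi(\psi)=\triangle_\phi^{-1}(DS|_\phi(\psi)-\cdots)$, whose inner term has principal part $-\triangle_\phi^2\psi$ and is only a distribution for $\psi\in C^{2,\a}$. The map $\mathcal{F}$ therefore does not exist on your space $X$ as written. The fix --- and the reason the paper calls this a \emph{second order} Monge--Amp\`ere flow --- is the identity $S(\vphi)=-\triangle_\vphi\log\frac{\om_\vphi^n}{\om^n}+\mathrm{tr}_\vphi Ric(\om)$, which lets you write $f(\vphi)=-\log\frac{\om_\vphi^n}{\om^n}+\triangle_\vphi^{-1}\bl \mathrm{tr}_\vphi Ric(\om)-\ul S\br+c(\vphi)$; the first term is a smooth local function of $(\p\bar\p\vphi)$ and the second is a smooth nonlocal map $C^{2,\a}\to C^{2,\a}$ by Schauder theory, so $f:C^{2,\a}\to C^{\a}$ is well defined and Fr\'echet smooth, with linearization $-\triangle_\phi\psi+(\text{smooth lower-order nonlocal})$. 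With this decomposition your steps two and three go through, and the nonlocal term you worry about in the last paragraph is genuinely of lower order, so it is absorbed as a perturbation rather than requiring a coupled-system reformulation. One further point you should make explicit: solvability of $\p_t\psi+L_t\psi=h$, $\psi(0)=\psi_0$ in $C^0([0,T],C^{2,\a})\cap C^1([0,T],C^{\a})$ is not classical parabolic Schauder theory (which lives in parabolic H\"older spaces) but the Da Prato--Grisvard/Lunardi maximal H\"older regularity for analytic semigroups; this is exactly the linear machinery underlying \thmref{short time: main} in \cite{zheng-2009}, so citing it closes that step.
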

A direct corollary of the continuous dependence on initial data \thmref{con dpd: main} says,
\begin{thm}(Chen-Ding-Zheng \cite{zheng-2009})\label{short time: stability}
If $M$ admits a cscK metric $\om$. Let $\vphi_0\in C^{2,\a}(M,g)$ be such that $\l \om
\leq \om_{\vphi_0}\leq \L \om$ for two positive constants $\l$ and $\L$.
Then for any $T>0$
there exits a positive constant $\eps_0(T)$. If
$|\vphi_0|_{C^{2,\a}(M,g)}\leq\eps_0(T)$,
then the pseudo-Calabi flow has a unique solution on $[0,T]$, and
$$|\dot\vphi|_{C^{\a}(M,g)}+|\vphi|_{C^{2,\a}(M,g)}\leq C\eps_0(T)$$
for all $t\in[0,T]$, where $C$ depends on $M$, $g$ and $T$. Furthermore
$\eps_0(T)$ goes to zero, as $T$ goes to infinity.
\end{thm}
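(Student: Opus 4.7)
The plan is to view this as a direct linearisation-at-a-stationary-solution argument: the cscK assumption makes $\vphi\equiv 0$ a stationary solution of the pseudo-Calabi flow, and the continuous dependence on initial data supplied by \thmref{con dpd: main} then upgrades automatically to a Lipschitz-type bound in a neighbourhood of this stationary solution.

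First I would verify that $\vphi_0=0$ produces a globally stationary solution. Inserting $\vphi\equiv 0$ into \eqref{PCF} and using $S(\om)=\ul S$, the elliptic equation reads $\triangle f=0$; the normalisation built into the pseudo-Calabi system forces $f\equiv 0$, and hence $\partial_t\vphi\equiv 0$. Thus $\vphi(t)\equiv 0$ lies in the solution class $X$ and trivially satisfies the pinching $\l\om\le\om_{\vphi}\le\L\om$ for any $\l\le 1\le\L$. In particular it falls inside the hypotheses of both \thmref{short time: main} and \thmref{con dpd: main}.

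Next I would apply \thmref{con dpd: main} at the initial datum $\phi_0=0$. This supplies a neighbourhood $U$ of $0$ in $C^{2,\a}(M,g)$ such that for every $\vphi_0\in U$ the pseudo-Calabi flow is solvable on $[0,T]$, and the map $\vphi_0\mapsto\vphi(\cdot)$ is at least $C^1$ from $U$ into $X=C^0([0,T],C^{2+\a})\cap C^1([0,T],C^{\a})$. Taking its Fr\'echet differential at the stationary point $0$ and integrating along the segment $s\mapsto s\vphi_0$ for $s\in[0,1]$ then produces a constant $C=C(M,g,T)$ such that
$$\sup_{t\in[0,T]}\Bigl(|\vphi(t)|_{C^{2,\a}(M,g)}+|\dot\vphi(t)|_{C^{\a}(M,g)}\Bigr)\ \le\ C(M,g,T)\,|\vphi_0|_{C^{2,\a}(M,g)},$$
whenever $|\vphi_0|_{C^{2,\a}}$ is small enough that the segment stays in $U$. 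Setting $\eps_0(T):=\min\{\eta(T),\,1/C(M,g,T)\}$, where $\eta(T)$ denotes the size of the neighbourhood from \thmref{con dpd: main}, then yields both the stated existence on $[0,T]$ and the pointwise estimate $|\dot\vphi|_{C^{\a}}+|\vphi|_{C^{2,\a}}\le C\,\eps_0(T)$.

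The main obstacle, and exactly the reason $\eps_0(T)\to 0$ as $T\to\infty$, is that the Lipschitz constant $C(M,g,T)$ produced by \thmref{con dpd: main} is in general an increasing function of $T$: the linearisation of \eqref{PCF} at a cscK metric is a fourth-order parabolic operator whose non-negative spectrum permits $C(T)$ to grow, typically exponentially in $t$. Consequently the choice $\eps_0(T)\le 1/C(M,g,T)$ forces $\eps_0(T)\to 0$. Upgrading this to a uniform-in-$T$ stability statement is precisely what the later sections of the paper achieve through the geometry of $\mathcal{H}_0$, the monotonicity of the $K$-energy, and the holomorphic-transformation reparametrisation, rather than through the short-time continuous-dependence argument used here.
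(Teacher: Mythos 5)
Your proposal is correct and follows the same route the paper takes: the paper derives this result as a direct corollary of the continuous dependence theorem (\thmref{con dpd: main}) applied at the stationary solution, which is exactly what you do by observing that the cscK condition makes $\vphi\equiv 0$ a global stationary solution and then converting the $C^1$ dependence of $\vphi_0\mapsto\vphi(\cdot)$ into a Lipschitz bound near $0$. Your added discussion of why $\eps_0(T)\to 0$ (growth of the Lipschitz constant and shrinkage of the neighbourhood $U$ with $T$) is consistent with the intended meaning and fills in detail the paper leaves implicit.
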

\section{No nontrivial holomorphic vector fields}\label{Nnhvf}
Let $\eta(M)$ be the set composed of all holomorphic vector fields on $M$.
Now, we start with the case $\eta(M)=\phi$. We shall prove the following proposition in this section.
\begin{prop}\label{no hol}
Assume $M$ admits a K\"ahler-Einstein metric $\om$ and has no
holomorphic vector fields.
There exits a small positive constant $\eps_0$,
suppose the initial datum satisfies $$|\vphi_0|_{C^{2,\a}(M)}\leq\eps_0,$$
then the K\"ahler-Ricci flow $g_\vphi$ converges smoothly to $g$.
\end{prop}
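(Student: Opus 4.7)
The plan is to combine the short-time stability theorem of Chen--Ding--Zheng (Theorem~\ref{short time: stability}), the monotonicity of the $K$-energy along the K\"ahler-Ricci flow, and the uniqueness of the K\"ahler-Einstein metric in the absence of holomorphic vector fields. By adding a suitable constant to $\vphi_0$ we may assume $\vphi_0\in\mathcal{H}_0$, so the normalized flow \eqref{KRF potential ke}--\eqref{a(t)} keeps $\vphi(t)$ in $\mathcal{H}_0$ for all $t$ where it exists. Fix a small $\delta>0$, to be chosen only in terms of $M$ and $\om$; my goal is to show that if $\eps_0$ is small enough then $|\vphi(t)|_{C^{2,\a}}\le\delta$ for all $t\ge0$ and that $\vphi(t)\to 0$ smoothly.

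First I would run a continuity/bootstrap argument. Let
\[
T^{*}=\sup\{\,T\ge 0:|\vphi(t)|_{C^{2,\a}(M,g)}\le\delta\text{ for all }t\in[0,T]\,\}.
\]
Theorem~\ref{short time: stability} guarantees $T^{*}>0$ and that by shrinking $\eps_0$ we can make $T^{*}$ as large as we please on any prescribed interval $[0,T]$. The nontrivial step is to show that in fact $T^{*}=+\infty$. On $[0,T^{*})$ the uniform $C^{2,\a}$-smallness yields two-sided metric bounds $\tfrac12\om\le\om_{\vphi(t)}\le 2\om$, uniform Sobolev and Poincar\'e constants, and (via parabolic smoothing and Theorem~\ref{regularity of PCF: reg of PCF}) uniform higher-order bounds for $t\ge 1$. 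Combined with Corollary~\ref{a(t) bdd} (uniform boundedness of $a(t)$) and \eqref{pt a} (the $K$-energy is nonincreasing and bounded below by Theorem~\ref{BM en}), this is enough compactness to extract, from any sequence $t_i\uparrow T^{*}$ (including $T^{*}=+\infty$), a subsequential smooth limit $\vphi_{\infty}\in\mathcal{H}_0$ with $|\vphi_{\infty}|_{C^{2,\a}}\le\delta$.

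Next, since $\nu(\vphi(t))$ is nonincreasing and bounded below, one has
\[
\int_{0}^{\infty}\!\!\int_{M}(S_{\vphi}-n)\,\dot\vphi\,\om^{n}_{\vphi}\,dt<\infty,
\]
which, together with the uniform geometric bounds, forces $\vphi_{\infty}$ to be a critical point of the $K$-energy, hence a K\"ahler-Einstein potential. Here the hypothesis $\eta(M)=\emptyset$ is decisive: by Theorem~\ref{BM en}, $\mathcal{E}$ consists of a single $\Aut_{0}(M)$-orbit, and the triviality of $\Aut_{0}(M)$ reduces this orbit to the single point $\om$. Thus $\om_{\vphi_{\infty}}=\om$, i.e.\ $\vphi_{\infty}\equiv 0$ in $\mathcal{H}_{0}$. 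Since every subsequential limit is forced to be $0$, the full flow converges smoothly to $0$, and in particular $|\vphi(t)|_{C^{2,\a}}<\delta/2$ for $t$ large. This rules out $T^{*}<\infty$ (an exit time would contradict $\vphi(T^{*})=0$) and simultaneously delivers smooth convergence of $g_{\vphi}$ to $g$.

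The main obstacle is closing the continuity argument: one needs to exclude the escape of $\vphi(t)$ from the $C^{2,\a}$-ball before the asymptotic analysis can be invoked. I would handle this by using Theorem~\ref{short time: stability} twice, picking $\eps_{0}\ll\delta$ small enough that the flow remains in the ball on $[0,T]$ for a $T$ large enough that the monotone $K$-energy has decreased below the threshold where the Bando--Mabuchi uniqueness, together with a compactness/contradiction argument, prevents any later escape. The absence of holomorphic vector fields is what makes this threshold exist, since it guarantees a strict positivity of the Hessian of $\nu$ at $\om$ (equivalently, a positive spectral gap of $\Delta_{\om}$ on functions with $\int f\,\om^{n}=0$ that are not Killing potentials), so that leaving the $C^{2,\a}$-ball costs a definite amount of $K$-energy.
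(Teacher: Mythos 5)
Your overall strategy is the same as the paper's: keep the flow in a small $C^{2,\alpha}$-ball using the monotonicity of the $K$-energy together with Theorem~\ref{BM en}, use Theorem~\ref{short time: stability} for a uniform lower bound on the escape time and Theorem~\ref{regularity of PCF: reg of PCF} for the compactness needed to pass to smooth limits, and identify every subsequential limit with $0$ via the normalization $I=0$ and the fact that $\eta(M)=\emptyset$ collapses $\mathcal{E}$ to the single point $\omega$. However, the step by which you close the continuity argument does not work as written. To rule out $T^{*}<\infty$ you need the subsequential limit of $\vphi(t_i)$ with $t_i\uparrow T^{*}$ to be a K\"ahler--Einstein potential, but your justification — finiteness of $\int_0^\infty\int_M(S_\vphi-n)\dot\vphi\,\om_\vphi^n\,dt$ — only forces $\tfrac{d}{dt}\nu\to 0$ along some sequence of times tending to $+\infty$; it gives no information at a finite exit time, where $\vphi(T^{*})$ has $C^{2,\alpha}$-norm exactly $\delta$ and has no reason to be a critical point of $\nu$. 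So the contradiction ``$\vphi(T^{*})=0$'' is not available by that route, and the argument is circular: the asymptotic analysis presupposes $T^{*}=\infty$.

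Your proposed repair also misidentifies the mechanism. One does not wait for the monotone $K$-energy to ``decrease below the threshold'' by some large $T$ — there is no rate guaranteeing that it ever would; instead, the explicit formula \eqref{K en im} shows $\nu(\vphi_0)\le C\eps_0$ is \emph{already} below the threshold at $t=0$, and monotonicity keeps it there for all time. Likewise, the threshold itself cannot come from strict positivity of the Hessian of $\nu$ at $\om$ (a pointwise second-variation statement gives no lower bound for $\nu$ on the $C^{2,\alpha}$-sphere, which is not compact). What makes the threshold exist is exactly the paper's lemma: argue by contradiction over a sequence of initial data $\vphi_s^0\to 0$; the exit times satisfy $T_s\ge T_1>0$ by Theorem~\ref{short time: stability}, so the exit slices $\vphi_s(T_s)$ carry uniform $C^{k}$ bounds by Theorem~\ref{regularity of PCF: reg of PCF} and hence converge smoothly (after passing to a subsequence) to some $\phi_\infty$ with $|\phi_\infty|_{C^{2,\alpha}}=\eps_1$, while $0\le\nu(\vphi_s(T_s))\le\nu(\vphi_s^0)\to 0$ forces $\nu(\phi_\infty)=0$, hence $\phi_\infty=0$ by Theorem~\ref{BM en} and the normalization — a contradiction. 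Once this lemma is in place, your asymptotic analysis on $[0,\infty)$ goes through and gives smooth convergence to $g$.
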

\begin{proof}
We at first show that under the assumption of the proposition,
the solution of \eqref{KRF potential ke} always stays in some small $\eps_1$-neighborhood of the zero function.
\begin{lem}
For any $\eps_1>0$, there exits a small positive constant $\eps_0$.
If $$|\vphi_0|_{C^{2,\a}(M)}\leq\eps_0,$$
then $|\vphi(t)|_{2,\a}\leq\eps_1$ for all
$t\in[0,+\infty)$.
\end{lem}
\begin{proof}
Suppose that the conclusion fails, then there must exist a sequence of initial datum
$\vphi^0_{s}$ such that
\[
|\vphi^0_s|_{C^{2,\a}}\leq\frac{1}{s}.
\]
By virtue of \thmref{short time: stability},
we get a sequence of solutions $\vphi_s(t)$ satisfying the
flow equations \eqref{KRF potential ke}
with $\vphi_s(0)=\vphi^0_s$.
Let $T_s$ be the first time such that
\begin{align}\label{no ho ass}
|\vphi_s(T_s)|_{C^{2,\a}}=\eps_1
\text{ and }|\vphi_s(t)|_{C^{2,\a}}<\eps_1
\end{align}
on $[0,T_s)$. According to \thmref{short time: stability} again, we have $T_s\geq T_1>0$.
Moreover, we apply \thmref{regularity of PCF: reg of
PCF} to \eqref{KRF potential ke} on $[T_s-2a,T_s]$ for fixed $a$ such that $0<a<\frac{T_s}{2}-\frac{T_1}{4}$, then we obtain the uniform higher order bound of the sequence of
the solutions
\[
|\vphi_s|_{C^{k,\a}(M)}\leq C(k,\eps_1,a), \forall k\geq0
\] on $[T_s-a,T_s]$.
Consequently, there is a subsequence of $\phi_s=\vphi_s(T_s)$ converges smoothly to $\phi_\infty$ satifying
\begin{align}\label{no ho ass lim}
|\phi_\infty|_{C^{2,\a}}=\eps_1.
\end{align}
It is obvious that $g_{\phi_\infty}$ is still a K\"ahler metric.
Since the $K$-energy is not only well defined for $\vphi^0_s$ by \eqref{K en im} but also decreasing along the K\"ahler-Ricci flow,
\thmref{BM en} implies
\[
0\leq\nu_\om(\phi_s)\leq\nu_\om(\vphi_s(0))\leq\frac{C}{s}.
\]
By passing the limit we obtain
\[
\lim_{s\rightarrow\infty}\nu_\om(\vphi_s)
=\nu_\om(\vphi_\infty)=0.
\]
According to \thmref{BM en}, we obtain $g_{\phi_\infty}$ is a K\"ahler-Einstein metric.
From \thmref{BM en} we deduce that
$\phi_\infty$ must be a constant. Furthermore the normalization condition $I(\phi_\infty)=0$ gives rise to $\phi_\infty=0$ which
contradicts to \eqref{no ho ass lim} and the lemma follows.
\end{proof}
According to \thmref{regularity of PCF: reg of
PCF} and $|\vphi(t)|_{C^{2,\a}}\leq\eps_1$ uniformly, we
have that $|\vphi(t)|_{C^k}\leq C_k$ for any $k\geq3$ away from
$t=0$. It follows that there is a subsequence of
any sequence $t_i$ converges smoothly to a limit function
$\vphi_\infty$. Moreover,
since the $K$-energy has lower bound
and it decays along the flow,
$\om_{\vphi_\infty}$ must be a K\"ahler-Einstein metric.
This togethers with \thmref{BM en} and the normalization condition implies that $\vphi_\infty=0$. Because ${t_i}$ is chosen randomly, we conclude the K\"ahler-Ricci flow converges smoothly
to the original K\"ahler-Einstein metric.
\end{proof}
\section{$M$ admits nontrivial holomorphic vector fields}\label{NHVF}
\subsection{Choice and estimate of holomorphic transformations}\label{CHT}
When M admits holomorphic vector fields,
we need to find an appropriate
holomorphic transformation.
Set $\mathcal{E}_0\subset\mathcal{H}_0$
be the space of K\"ahler potentials of K\"ahler-Einstein metrics.

Let $\sigma^\ast_t\om$ be any curve  with $\sigma_0=id$
in $\mathcal{E}_0$,
the tangent vector at $\om$ is
$\frac{d}{dt}\sigma^\ast_t|_{t=0}\om=L_X\om$. Here $X=(\sigma_t)_\ast^{-1}\p_t\sigma_t|_{t=0}$ is
the real part of some holomorphic vector field. Since $C_1(M)>0$ implies $M$
is simple connected by Kobayashi \cite{MR0133086}, we obtain
$L_X\om=\sqrt{-1}\p\bar\p\theta_X$ for some function $\theta_X$.
Hence, that the dimension of
the space of holomorphic vector fields is finite
infers which of $\mathcal{E}_0$ is finite.
Moreover, according to Mabuchi \cite{MR909015}, we have $\mathcal{E}_0$ is also a totally geodesic submanifold
of $\mathcal{H}_0$.
Then the point $\rho\in \mathcal{E}_0$ realizes the shortest distance between
$\varphi$ and $\mathcal{E}_0$ is uniquely determined.
In fact, according to \thmref{BM en}
we obtain a holomorphic diffeomorphism $\sigma\in Aut_0(M)$
such that
$\sigma^\ast\om
=\om+\sqrt{-1}\p\bar\p\rho$.
The following invariance of the $K$-energy
under the holomorphic transformation is known in Mabuchi \cite{MR867064}.
\begin{lem}\label{PCF on SoKM: energy decay}
$\nu(\om,\om_{(\sigma^{-1})^\ast(\vphi-\rho)})=\nu(\om,\om_{\vphi})=
\nu(\om_{\rho},\om_{\vphi})$.
\end{lem}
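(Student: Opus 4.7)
The plan is to verify the two equalities in turn, each reducing to a standard fact about the K-energy.

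For the second equality $\nu(\om,\om_\vphi) = \nu(\om_\rho,\om_\vphi)$, I would first note the cocycle identity
\[
\nu(\om,\om_\vphi) = \nu(\om,\om_\rho) + \nu(\om_\rho,\om_\vphi),
\]
which follows immediately from the path-integral definition \eqref{K en} by concatenating a path from $\om$ to $\om_\rho$ with one from $\om_\rho$ to $\om_\vphi$. Since $\om_\rho = \sigma^\ast\om$ is itself K\"ahler-Einstein, \thmref{BM en}(ii) lets me choose such a path inside the single $\Aut_0(M)$-orbit $\mathcal{E}$, along which $S \equiv n = \as$; the integrand in \eqref{K en} then vanishes identically, so $\nu(\om,\om_\rho)=0$ and the second equality follows.

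For the first equality, I would compute $\sigma^\ast\om_{(\sigma^{-1})^\ast(\vphi-\rho)}$ directly. Writing $\psi = (\sigma^{-1})^\ast(\vphi-\rho)$, the facts that pullback commutes with $\sqrt{-1}\p\bar\p$ and that $\sigma^\ast\om = \om_\rho$ yield
\[
\sigma^\ast\om_\psi = \sigma^\ast\om + \sqrt{-1}\p\bar\p(\vphi-\rho) = \om_\rho + \sqrt{-1}\p\bar\p(\vphi-\rho) = \om_\vphi,
\]
so $\om_\psi$ and $\om_\vphi$ are related by the biholomorphism $\sigma$. Since the K-energy is built out of intrinsic curvature quantities and invariant integrals, it transforms naturally under pullback by biholomorphisms:
\[
\nu(\om_\rho,\om_\vphi) = \nu(\sigma^\ast\om,\sigma^\ast\om_\psi) = \nu(\om,\om_\psi).
\]
Combining this with the second equality proved above gives the first.

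There is no real obstacle here; the statement is a formal consequence of two inputs, namely the biholomorphism invariance of the K-energy and the Bando-Mabuchi vanishing of $\nu$ on the K\"ahler-Einstein orbit. The only step demanding a bit of care is the pullback bookkeeping, specifically that $\sigma^\ast(\sigma^{-1})^\ast f = f$ for functions and that $\sqrt{-1}\p\bar\p$ commutes with $\sigma^\ast$ because $\sigma$ is holomorphic; both are routine.
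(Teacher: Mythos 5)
Your proof is correct and follows essentially the same route as the paper: biholomorphism invariance of the K-energy (giving $\nu(\om,\om_\psi)=\nu(\sigma^\ast\om,\om_\vphi)=\nu(\om_\rho,\om_\vphi)$), the cocycle property, and the vanishing of the K-energy between the two K\"ahler-Einstein metrics $\om$ and $\om_\rho$. The paper simply cites Mabuchi for the first and last steps, whereas you supply the pullback bookkeeping and the path-inside-the-orbit argument for $\nu(\om,\om_\rho)=0$ explicitly; both are fine.
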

\begin{proof}
Since $\omega$ and $\omega_{\rho}$ are both K\"ahler-Einstein metrics,
we have that
\begin{align*}
\nu(\om,\om_{(\sigma^{-1})^\ast(\vphi-\rho)})
&=\nu(\sigma^\ast\om,\om_{\vphi})
=\nu(\om_{\rho},\om_{\vphi})\\
&=\nu(\om_{\rho},\om)+\nu(\om,\om_{\vphi})
=\nu(\om,\om_{\vphi}).
\end{align*}
The first inequality holds by Lemma (5.4.1) in \cite{MR867064}
and the last one follows by Theorem (5.3) in \cite{MR867064}.
\end{proof}
In \cite{zheng-2009} we prove some lemmas
regarding to the metric geometry
of the space of K\"ahler-Einstein metrics.
The following two lemmas show that
when metrics stay close to $\om$,
their projection metrics are uniformly bounded.

\begin{lem}\label{Kf on SoKM: dis imp norm}
There exists a positive constant $\eps$,
for any $\rho$ satisfies $d(0,\rho)\leq\eps$,
we have
$|\rho|_{C^{3,\a}}\leq C_2\eps$.
\end{lem}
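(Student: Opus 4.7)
The plan is to exploit the fact that $\mathcal{E}_0$ is a \emph{finite dimensional} totally geodesic submanifold of $\mathcal{H}_0$, so that near $0$ both the Mabuchi distance $d(0,\cdot)$ and the $C^{3,\a}$ norm become controlled by one common finite dimensional parameter, after which a standard bi-Lipschitz comparison finishes the job.

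First I would parametrize a neighborhood of $0$ in $\mathcal{E}_0$ by a finite dimensional Euclidean chart. By \thmref{BM en}, $\mathcal{E}_0$ is a single $\Aut_0(M)$-orbit, hence identifies with $\Aut_0(M)/K$ where $K$ is the identity component of the isometry group of $\om$. Let $\mathfrak{m}$ be a linear complement to $\mathrm{Lie}(K)$ in $\mathrm{Lie}(\Aut_0(M))$, equipped with a fixed Euclidean norm $|\cdot|$. For $X\in\mathfrak{m}$ small, define $\rho(X)\in\mathcal{E}_0$ by
\[
(\exp X)^\ast\om=\om+\sqrt{-1}\p\bar\p\,\rho(X),\qquad I(\rho(X))=0.
\]
Then $X\mapsto \rho(X)$ is smooth into $C^\infty(M)$ and a local diffeomorphism onto a neighborhood of $0\in\mathcal{E}_0$. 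Since $\rho(0)=0$ and the map is $C^1$ at the origin, one obtains an upper bound of the form
\[
|\rho(X)|_{C^{3,\a}(M,g)}\leq A\,|X|\quad\text{for }|X|\leq\delta,
\]
where $A$ and $\delta$ depend only on this parametrization.

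Second I would bound $|X|$ from above by $d(0,\rho(X))$. The Mabuchi metric restricts to a smooth finite dimensional Riemannian metric $g_{\mathcal{E}}$ on $\mathcal{E}_0$ which is positive definite at $0$; consequently, in the chart $X\mapsto\rho(X)$ the induced Riemannian distance on $\mathcal{E}_0$ is bi-Lipschitz to $|\cdot|$. Because $\mathcal{E}_0\subset\mathcal{H}_0$ is totally geodesic (Mabuchi \cite{MR909015}) and $\mathcal{H}_0$ is nonpositively curved in the Alexandrov sense (Calabi-Chen \cite{MR1969662}), Chen's $C^{1,1}$ geodesic connecting $0$ to $\rho(X)$ must also lie in $\mathcal{E}_0$, so $d(0,\rho(X))$ coincides with the intrinsic distance of $(\mathcal{E}_0,g_\mathcal{E})$. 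This yields
\[
d(0,\rho(X))\geq a\,|X|\quad\text{for }|X|\leq\delta'.
\]
Combining the two steps, if $\eps<a\delta'$ and $d(0,\rho)\leq\eps$, the corresponding parameter $X$ satisfies $|X|\leq\eps/a$, so $|\rho|_{C^{3,\a}}\leq (A/a)\eps$, proving the lemma with $C_2=A/a$.

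The main obstacle is the identification of $d(0,\rho(X))$ with the intrinsic Riemannian distance on $\mathcal{E}_0$: Chen's distance is defined through $C^{1,1}$ geodesics, whereas Mabuchi's total geodesicity is a statement about the smooth Levi-Civita connection on $\mathcal{H}_0$. The Alexandrov nonpositive curvature should bridge the gap; alternatively, one can bypass it via a direct minimization in the $\Aut_0(M)$-orbit or by an approximation argument along smooth geodesic segments.
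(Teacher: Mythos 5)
Your proof follows essentially the same route as the paper: both reduce the statement to the finite dimensionality of $\mathcal{E}_0$, combining a smooth local chart around $0$ (the paper uses the Riemannian exponential $\exp_0$ of the symmetric space $\mathcal{E}_0$ where you use the group exponential of $\Aut_0(M)$) with the equivalence of all norms on a finite-dimensional vector space. You are in fact more careful than the paper on the one delicate point, namely identifying the ambient Mabuchi distance $d(0,\rho)$ with the intrinsic distance of $\mathcal{E}_0$ via total geodesicity and nonpositive curvature, a step the paper passes over silently.
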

\begin{proof}
Since $\mathcal{E}_0$ is a finite dimension Riemannian symmetric space,
the small $\eps$ neighborhood near $\rho=0$ in this submanifold can be pulled back
by the exponential map $exp_0$ to the tangent space
$T_0(\mathcal{E}_0)$ at $0$.
Denote $\psi=exp_0^{-1}(\rho)$. Then the length from $\psi$ to $0$
is $\eps$.
We obtain the norm induced by the distance on $T_0(\mathcal{E}_0)$ is equivalent to the $C^{2,\a}$ norm, since all norms on a finite-dimensional vector space are equivalent.
Then we have $|exp_0^{-1}(\rho)|_{C^{2,\a}}$ is bounded by $C_1\eps$.
Furthermore, since the exponential map is a diffeomorphism in the $\eps$ neighborhood near $\rho=0$,
we obtain $|\rho|_{C^{2,\a}}\leq C_2\eps$ for some constant $C_2$ and this lemma follows by an appropriate choice of $\eps$.
\end{proof}
\begin{rem}
In fact, we can improve the above conclusion
in \lemref{Kf on SoKM: dis imp norm} for $C^{k}$
of fix $k\geq0$, not only for $C^{3,\a}$ norm.
\end{rem}
\begin{lem}\label{Kf on SoKM: gauge bound}
There exists a positive constant $\eps_1$.
If $|\vphi|_{C^{2,\a}}\leq\eps_1$,
then $|\rho|_{C^{3,\a}}\leq C_4$
and $|\sigma|_h\leq C_5$.
Here $h$ is the left invariant metric in $Aut(M)$.
\end{lem}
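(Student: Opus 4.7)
The strategy is to first control $\rho$ via the Mabuchi distance and then invoke \lemref{Kf on SoKM: dis imp norm}.

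Since $\rho$ realizes the minimum of $d(\vphi,\cdot)$ on the totally geodesic submanifold $\mathcal{E}_0\ni 0$, the projection property gives $d(\vphi,\rho)\leq d(\vphi,0)$, hence by the triangle inequality
\[
d(0,\rho)\leq d(0,\vphi)+d(\vphi,\rho)\leq 2\,d(0,\vphi).
\]
To bound $d(0,\vphi)$ by $|\vphi|_{C^0}$, I would consider the affine path $\vphi(t)=t\vphi$, $t\in[0,1]$. It lies in $\mathcal{H}$ because $\om+\sqrt{-1}\p\bar\p(t\vphi)=(1-t)\om+t\om_\vphi>0$. Combining the length formula \eqref{SKM: length} with the cohomological identity $\int_M\om^n_{t\vphi}=V$ gives
\[
d(0,\vphi)\leq\int_0^1\Bl\int_M\vphi^2\om^n_{t\vphi}\Br^{\frac{1}{2}}dt\leq V^{\frac{1}{2}}|\vphi|_{C^0}\leq C\eps_1.
\]
Choosing $\eps_1$ small enough that $2C\eps_1\leq\eps$, where $\eps$ is the threshold appearing in \lemref{Kf on SoKM: dis imp norm}, that lemma directly produces the desired estimate $|\rho|_{C^{3,\a}}\leq C_2\cdot 2C\eps_1=:C_4$.

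For the bound on $\sigma$, recall from \thmref{BM en} that $\mathcal{E}_0$ is identified with the smooth finite-dimensional homogeneous space $Aut_0(M)/K$, where $K\subset Aut_0(M)$ is the compact isotropy subgroup of $\om$, via $[\sigma]\mapsto\rho$ with $\sigma^\ast\om=\om+\sqrt{-1}\p\bar\p\rho$. Since this identification is a diffeomorphism of finite-dimensional manifolds, the preimage of the $C^{3,\a}$-ball of radius $C_4$ in $\mathcal{E}_0$ is a bounded neighborhood of $[\Id]$ in $Aut_0(M)/K$ with respect to the quotient metric induced by $h$.

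The only subtlety is that $\sigma$ itself is determined only modulo $K$; compactness of $K$ allows the $h$-shortest representative in the coset to be selected, which then satisfies the desired uniform estimate $|\sigma|_h\leq C_5$. This last step is the one that requires the most care, but it reduces to a routine continuity argument on the finite-dimensional Lie group $Aut_0(M)$ once the $C^{3,\a}$ control on $\rho$ has been established.
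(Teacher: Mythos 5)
Your argument follows the paper's proof essentially verbatim: the paper likewise bounds $d(0,\vphi)\leq C\eps_1$ by the length of the linear path (normalized as $t\vphi-I(t\vphi)$ so as to stay in $\mathcal{H}_0$), uses the same projection-plus-triangle-inequality step to get $d(0,\rho)\leq 2d(0,\vphi)$, and then applies \lemref{Kf on SoKM: dis imp norm}. For the estimate $|\sigma|_h\leq C_5$ the paper simply cites Lemma 4.6 of Chen--Tian \cite{MR2219236}, which packages the finite-dimensional argument on $Aut_0(M)$ that you sketch in your last two paragraphs.
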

\begin{proof}
Choose a path $\gamma_t=t\vphi-I(t\vphi)\in \mathcal{H}_0$
for $0\leq t\leq1$.
Denote $d(0,\vphi)$
the distance between $0$ and $\vphi$.
Then by using \eqref{SKM: length}, we compute
\begin{align*}
d(0,\vphi)
&\leq L(\gamma_t)=\int_0^1(\int_M(\frac{\p\gamma_t}{\p t})^2
\om^n_{\gamma_t})^{\frac{1}{2}}dt\\
&=\int_0^1(\int_M(\vphi-\p_tI(t\vphi))^2
\om^n_{\gamma_t})^{\frac{1}{2}}dt\leq C_3\eps_1
\end{align*}
for $|\vphi|_{C^{2,\a}}\leq\eps_1$.
Moreover,
the choice of the $\rho$ implies
\begin{align*}
d(0,\rho)
\leq d(0,\varphi)+d(\varphi,\rho)
\leq 2d(0,\varphi) \leq C_3\epsilon_1
\end{align*}
by the triangle inequality.
From \lemref{Kf on SoKM: dis imp norm}, it follows that
$|\rho|_{C^{3,\a}}\leq C_4=C_2C_3\eps_1$.
Furthermore, using Lemma 4.6 in Chen-Tian \cite{MR2219236},
we derive $|\sigma|_h\leq C_5$ and the lemma follows.
\end{proof}
\begin{rem}
Alternatively the
holomorphic transformation can be derived by minimizing $I-J$ functional
in Bando-Mabuchi's work \cite{MR946233}, that will be further discussed in the Section \ref{ACHT}.
They use this minimizer to prove the
uniqueness of the K\"ahler-Einstein metric
when the first Chern class is positive.
The minimizer of $I-J$ functional is not unique in general,
since the second variation of this functional is not strictly positive.
However, we observe that when the potential is small enough,
the minimizer is unique. Furthermore, we prove a similar estimate \corref{PCF on SoKM: gauge bound} to \lemref{Kf on SoKM: gauge bound}.
\end{rem}
\subsection{Long time existence and Cheeger-Gromov convergence}
Set
$$\mathcal{S}(\eps_1,C(k,\eps_1))
=\{\vphi\vert|\vphi|_{C^{2,\a}}\leq\eps_1;
|\vphi|_{C^{k,\a}(M)}\leq C(k,\eps_1)\}.$$
It is obvious that $0\in\mathcal{S}$.
\begin{lem}\label{PCF on SoKM: norm bound by enery}
For any $\eps>0$, There is a small positive constant $o$ depends on $\eps$ and $\mathcal{S}$
such that for any $\vphi\in\mathcal{S}$,
if $\nu_{\om}(\vphi)\leq o$,
then $|(\sigma^{-1})^\ast(\vphi-\rho)|_{C^{2,\a}}<\eps$.
\end{lem}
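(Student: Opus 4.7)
The plan is to argue by contradiction via a compactness argument, using the uniform higher regularity built into $\mathcal{S}$ and the fact that $\nu_\om$ is a continuous functional on $C^{2,\a}$. Suppose the claim fails. Then for some fixed $\eps>0$ there is a sequence $\vphi_s\in\mathcal{S}$ with $\nu_\om(\vphi_s)\to 0$ yet
\[
|(\sigma_s^{-1})^\ast(\vphi_s-\rho_s)|_{C^{2,\a}}\geq\eps,
\]
where $(\rho_s,\sigma_s)$ is the projection data associated to $\vphi_s$ as in \secref{CHT}.

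First I would exploit the $C^{k,\a}$ bound built into the definition of $\mathcal{S}$: choosing any $k>3$ gives $|\vphi_s|_{C^{k,\a}}\leq C(k,\eps_1)$, so Arzela--Ascoli produces a subsequence converging in $C^{3,\a}$ to some $\vphi_\infty$ with $|\vphi_\infty|_{C^{2,\a}}\leq\eps_1$. The explicit formula \eqref{K en im} shows that $\nu_\om$ is continuous on $C^{2,\a}$ (for $\eps_1$ small enough to keep $\om_\vphi$ uniformly positive), hence $\nu_\om(\vphi_\infty)=0$, and \thmref{BM en} then forces $\om_{\vphi_\infty}$ to be K\"ahler--Einstein, i.e. $\vphi_\infty\in\mathcal{E}_0$.

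Next I would pass to the projections. By \lemref{Kf on SoKM: gauge bound}, $|\rho_s|_{C^{3,\a}}\leq C_4$ and $|\sigma_s|_h\leq C_5$, so further extraction gives $\rho_s\to\rho_\infty$ in $C^{2,\a}$ and $\sigma_s\to\sigma_\infty$ in $\Aut_0(M)$. To identify $\rho_\infty$ with $\vphi_\infty$ I use the minimizing property of the projection together with $\vphi_\infty\in\mathcal{E}_0$:
\[
d(\vphi_s,\rho_s)\leq d(\vphi_s,\vphi_\infty).
\]
The straight-segment estimate for the Mabuchi length used in \lemref{Kf on SoKM: gauge bound} bounds the right-hand side by a constant multiple of $|\vphi_s-\vphi_\infty|_{C^{2,\a}}\to 0$, so $d(\vphi_s,\rho_s)\to 0$, and by the triangle inequality $d(\rho_s,\vphi_\infty)\to 0$. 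Since $\rho_s,\vphi_\infty\in\mathcal{E}_0$ and the latter is a finite-dimensional totally geodesic submanifold where \lemref{Kf on SoKM: dis imp norm} converts Mabuchi-distance convergence into $C^{2,\a}$ convergence in any small neighborhood, we conclude $\rho_\infty=\vphi_\infty$ and $\rho_s\to\vphi_\infty$ in $C^{2,\a}$.

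Combining the pieces gives $\vphi_s-\rho_s\to 0$ in $C^{2,\a}$, while the uniform bound $|\sigma_s|_h\leq C_5$ provides a uniform $C^{2,\a}$ bound on $\sigma_s^{-1}$ as diffeomorphisms of $M$; pull-back is then a continuous operation and $(\sigma_s^{-1})^\ast(\vphi_s-\rho_s)\to 0$ in $C^{2,\a}$, contradicting our standing assumption. The delicate step is the identification $\rho_\infty=\vphi_\infty$, since it requires converting Mabuchi-distance convergence into $C^{2,\a}$ convergence; this is only valid inside the small neighborhood of $\mathcal{E}_0$ where \lemref{Kf on SoKM: dis imp norm} applies, so I would take care to verify via the uniform $C^{3,\a}$ bound from \lemref{Kf on SoKM: gauge bound} that $\rho_s$ is eventually trapped in such a neighborhood of $\vphi_\infty$, which is where the finite-dimensionality of $\mathcal{E}_0$ plays its essential role.
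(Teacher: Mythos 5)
Your proposal is correct and takes essentially the same route as the paper: argue by contradiction, extract a smooth limit $\vphi_\infty$ from the uniform bounds defining $\mathcal{S}$, use the $K$-energy together with \thmref{BM en} to place $\vphi_\infty$ in $\mathcal{E}_0$, and show that the projections $\rho_s$ and gauges $\sigma_s$ converge so that the pulled-back potentials tend to zero. The only (harmless) deviation is in identifying $\rho_\infty=\vphi_\infty$: you use the minimizing property $d(\vphi_s,\rho_s)=d(\vphi_s,\mathcal{E}_0)\le d(\vphi_s,\vphi_\infty)\to 0$ together with the finite-dimensionality of $\mathcal{E}_0$ directly, whereas the paper reaches the same conclusion by contradiction via the $C^1$ regularity of the distance function from Chen's work; both rest on the same ingredients.
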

\begin{proof}
 If the conclusion fails, we assume there exist a positive constant $\eps$ and a sequence of
 $\vphi_s\in\mathcal{S}$ satisfying
\begin{align}\label{PCF on SoKM: bound of vs}
\nu_{\om}(\vphi_s)\leq\frac{1}{s}\nonumber
\end{align} such that
\begin{equation}\label{PCF on SoKM: contra assum}
|(\sigma_s^{-1})^\ast(\vphi_s-\rho_s)|_{C^{2,\a}}\geq\eps.
\end{equation}
Since $\vphi_s\in \mathcal{S}(\eps_1,C(k,\eps_1))$, we obtain a subsequence $\vphi_{s_j}$ of $\vphi_s$ converges to $\vphi_\infty$ in smooth sense.
Let $\hat\vphi_s=(\sigma_s^{-1})^\ast(\vphi_s-\rho_s)$.
\lemref{Kf on SoKM: gauge bound} gives
\begin{align*}
|\rho_s|_{C^{3,\a}}\leq C_4\text{ and }|\sigma_s|_h\leq C_5
\end{align*}
which implies there are subsequences (using the same notation) of
$\rho_{s_j}$ and $\sigma_{s_j}$
by the Azela-Ascoli theorem and the Bolzano-Weierstrass theorem respectively
such that
\begin{align*}
&\rho_{s_j}\rightarrow\rho_\infty \text{ in } C^{3,\b} \text{ sense for any } \b<\a \\&\text{ and }
\sigma_{s_j}\rightarrow\sigma_\infty \text{ in the left invariant metric}.
\end{align*}
Then combining with \lemref{PCF on SoKM: energy decay} which
implies that
$$\nu_{\om}(\vphi_\infty)=\nu_{\om}(\hat\vphi_\infty)=0$$ we derive $\hat\vphi_{s_j}$ converges to $\hat\vphi_\infty=(\sigma_\infty^{-1})^\ast(\vphi_\infty-
\rho_\infty)\in\mathcal{E}_0$ in $C^{3,\b}$ and $\sigma_\infty^\ast\om=
\om+\p\bar\p\rho_\infty$. Moreover, according to \thmref{BM en}, we have $\hat\vphi_\infty,\vphi_\infty\in \mathcal{E}_0$.
We claim that $$d(\vphi_\infty,\rho_\infty)=0.$$
Otherwise for some sufficient large $N$,
when $s_j>N$,
$d(\vphi_{s_j},\rho_{s_j})=d(\vphi_{s_j},\mathcal{E}_0)$ has a strictly positive lower bound.
Since it is shown the distance function is at least $C^1$ in Chen \cite{MR1863016},
we have $d(\vphi_\infty,\mathcal{E}_0)>0$
that contradicts to $\vphi_\infty\in \mathcal{E}_0$.
Consequently, this claim holds and
implies $\hat\vphi_\infty=0$ which is a contradiction to
$|\hat\vphi_\infty|_{C^{2,\a}}\geq\eps$ given by \eqref{PCF on SoKM: contra assum}.
\end{proof}

\begin{prop}\label{hol}
Assume $M$ admits a K\"ahler-Einstein metric $\om$
and has nontrivial holomorphic vector fields.
There is a small positive constant $\eps_0$.
If $|\vphi_0|_{C^{2,\a}(M)}\leq\eps_0$,
then there is a unique solution $\vphi(t)$
and the corresponding holomorphic transformation $\varrho(t)$
such that the normalization potential of $\varrho(t)^\ast \om(t)$
always stays in $\mathcal{S}$.
Moreover, for any sequence $\om(t_i)$,
there is a subsequence $\om(t_{i_j})$ such that
$\varrho(t_{i_j})^\ast \om(t_{i_j})$ converges smoothly to a K\"ahler-Einstein metric $\om_\infty$.
\end{prop}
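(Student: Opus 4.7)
The strategy parallels that of \thmref{no hol}, but tracks the pulled-back normalized potential $\hat\vphi(t) = (\sigma(t)^{-1})^\ast(\vphi(t) - \rho(t))$ instead of $\vphi(t)$ itself. Here $\rho(t) \in \mathcal{E}_0$ is the unique closest point in Mabuchi metric (well defined in a tubular neighborhood of $\mathcal{E}_0$, since $\mathcal{E}_0$ is a finite-dimensional totally geodesic, hence convex, submanifold of the $\text{CAT}(0)$ space $\mathcal{H}_0$), and $\sigma(t) \in \Aut_0(M)$ satisfies $\sigma(t)^\ast\om = \om_{\rho(t)}$ by \thmref{BM en}. Taking $\varrho(t) = \sigma(t)^{-1}$, a direct pullback computation gives $\varrho(t)^\ast\om(t) = \om + \sqrt{-1}\p\bar\p\hat\vphi(t)$ with $\hat\vphi(t) \in \mathcal{H}_0$.

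By Cao's long-time existence the K\"ahler-Ricci flow persists for all $t \geq 0$, and by \thmref{short time: stability} it remains small in $C^{2,\a}$ on some interval $[0,T_1]$ where \lemref{Kf on SoKM: gauge bound} applies. I argue by contradiction: suppose a sequence $\vphi_0^s$ with $|\vphi_0^s|_{C^{2,\a}} \to 0$ produces $\hat\vphi_s$ exiting $\mathcal{S}$ at a first time $T_s$, with $|\hat\vphi_s(T_s)|_{C^{2,\a}} = \eps_1$ (higher-order saturation is handled identically). Short-time stability forces $T_s \geq T_1 > 0$, and \thmref{regularity of PCF: reg of PCF} on a shrunken interval $[T_s - a, T_s]$ provides uniform $C^{k,\a}$ bounds, yielding a subsequence with $\hat\vphi_s(T_s) \to \hat\vphi_\infty$ smoothly and $|\hat\vphi_\infty|_{C^{2,\a}} = \eps_1$.

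The contradiction comes from the $K$-energy. Monotonicity of $\nu_\om$ along the K\"ahler-Ricci flow together with continuity at $0$ give $\nu_\om(\vphi_s(T_s)) \leq \nu_\om(\vphi_0^s) \to 0$, and \lemref{PCF on SoKM: energy decay} transfers this to $\nu_\om(\hat\vphi_s(T_s)) \to 0$. Now the pivotal observation: the $\mathcal{E}_0$-projection of $\hat\vphi_s(T_s)$ is identically the zero potential, because pullback by $\sigma_s^{-1} \in \Aut_0(M)$ is a Mabuchi isometry of $\mathcal{H}_0$ preserving $\mathcal{E}_0$ and it sends the closest K\"ahler-Einstein metric $\om_{\rho_s}$ to $\om$. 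Applying \lemref{PCF on SoKM: norm bound by enery} to $\hat\vphi_s(T_s) \in \mathcal{S}$, whose projection and associated holomorphic transformation are therefore both trivial, yields $|\hat\vphi_s(T_s)|_{C^{2,\a}} < \eps_1/2$ for large $s$, contradicting $|\hat\vphi_s(T_s)|_{C^{2,\a}} = \eps_1$.

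For the subsequential smooth convergence, the uniform $C^{k,\a}$ bounds together with $\hat\vphi(t) \in \mathcal{S}$ give, via Arzela-Ascoli, smooth subsequential limits $\hat\vphi(t_{i_j}) \to \hat\vphi_\infty$. To identify $\om_\infty = \om_{\hat\vphi_\infty}$ as K\"ahler-Einstein I would use the dissipation identity
\[
\frac{d}{dt}\nu_\om(\vphi(t)) = -\frac{1}{V}\int_M|\nabla\dot\vphi|^2_\vphi\om^n_\vphi,
\]
which follows from $S_\vphi - n = -\Delta_\vphi\dot\vphi$, a direct consequence of $\om$ being K\"ahler-Einstein and \eqref{KRF potential ke}. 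Combined with the lower bound from \thmref{BM en}, this forces $\dot\vphi \to 0$ in $L^2$ along a subsequence, and the pullback invariance transfers this to the limit, forcing $\hat\vphi_\infty$ to solve the K\"ahler-Einstein equation. The principal conceptual obstacle is the identification of the $\mathcal{E}_0$-projection of $\hat\vphi_s(T_s)$ as the zero potential, which hinges on the $\Aut_0(M)$-invariance of the Mabuchi metric and the Bando-Mabuchi characterization of $\mathcal{E}$ as a single $\Aut_0(M)$-orbit; without these two ingredients the contradiction mechanism collapses.
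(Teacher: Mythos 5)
Your proposal assembles the right ingredients (the Mabuchi-metric projection onto $\mathcal{E}_0$, the invariance and monotonicity of the $K$-energy via \lemref{PCF on SoKM: energy decay}, and \lemref{PCF on SoKM: norm bound by enery} applied to a potential whose projection is trivial), and the observation that pullback by $\sigma^{-1}$ is a Mabuchi isometry sending the projection of $\hat\vphi_s(T_s)$ to the zero potential is correct. But the single global contradiction argument for the continuously-gauged potential $\hat\vphi(t)=(\sigma(t)^{-1})^\ast(\vphi(t)-\rho(t))$ does not close. The exit time $T_s$ is defined by $|\hat\vphi_s(T_s)|_{C^{2,\a}}=\eps_1$, but every lemma you need at that moment --- the gauge bound \lemref{Kf on SoKM: gauge bound}, the regularity \thmref{regularity of PCF: reg of PCF} giving the higher-order bounds required for membership in $\mathcal{S}$, and the compactness of $(\rho_s,\sigma_s)$ --- has hypotheses phrased in terms of the \emph{un-gauged} potential $\vphi_s(t)$. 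Smallness of $\hat\vphi$ does not control $\vphi$ or the gauge: $\om_\vphi$ could be the pullback by a very large automorphism of a metric close to $\om$, in which case $\hat\vphi$ is tiny while $\vphi$ and $\rho$ are huge and $d(0,\rho)$ is not small, so \lemref{Kf on SoKM: dis imp norm} is unavailable. On $[0,T_1]$ you do have control of $\vphi_s$ from \thmref{short time: stability}, but between $T_1$ and $T_s$ you have none, because $\eps_0(T)\to 0$ as $T\to\infty$ and, in the presence of nontrivial holomorphic vector fields, $\vphi(t)$ can genuinely drift along the $\Aut_0(M)$-orbit and leave every $C^{2,\a}$-ball --- which is precisely why the gauge is needed.

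The missing idea is the paper's restart-and-iterate mechanism, whose linchpin is the computation that $\vphi_1(t)=(\sigma^{-1})^\ast(\vphi(t)-\rho)$ again solves the K\"ahler--Ricci flow \eqref{KRF potential ke} with the \emph{same} reference metric $\om$, precisely because $\om_\rho=\sigma^\ast\om$ is again K\"ahler--Einstein. The paper lets $T$ be the first time the actual flow potential reaches $|\vphi(T)|_{C^{2,\a}}=\eps_1$ (so $\vphi(T)\in\mathcal{S}$ by regularity on $[T/2,T]$), uses the $K$-energy argument to see that the pulled-back potential has norm $<\eps_1/2$, and then restarts the flow from this new small datum, gaining another interval of length at least $T$ each time; the gauge $\varrho(t)$ is the accumulated product of the $\sigma_i$. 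This simultaneously delivers long-time existence (which you instead import from Cao, contrary to the paper's stated aim of not presupposing it), all-time control of the current-stage potential, and a well-defined $\varrho(t)$. Without verifying the flow's invariance under the pullback and performing the iteration, your contradiction has no way to certify that $\hat\vphi_s(T_s)$ lies in $\mathcal{S}$ or that the gauges converge, so \lemref{PCF on SoKM: norm bound by enery} cannot be invoked. The convergence and identification of the limit as K\"ahler--Einstein in your last paragraph matches the paper's argument and is fine once the first part is repaired.
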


\begin{figure}[t]
\begin{center}
  \includegraphics[width=\columnwidth]{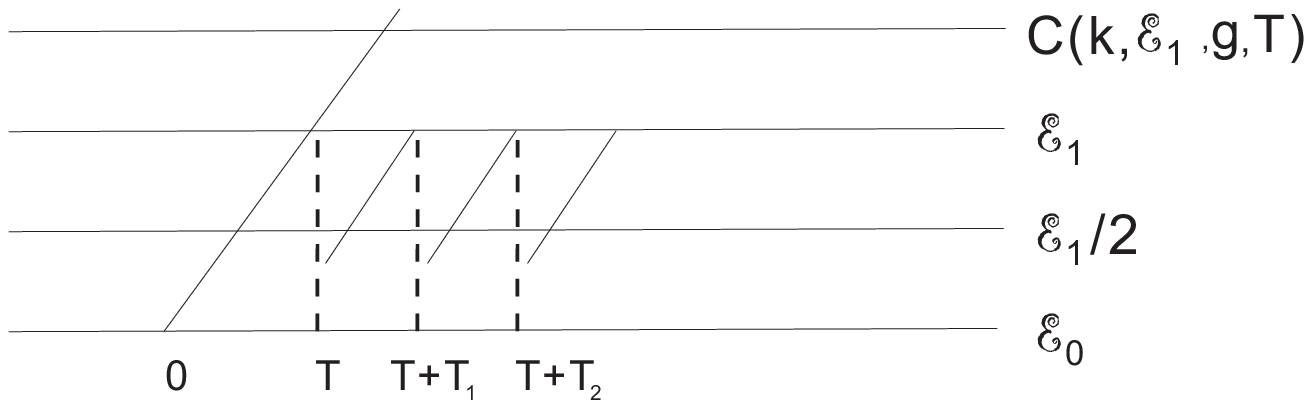}
\end{center}
\caption{Illustration of the proof idea of Proposition \ref{hol} : solving the equation after pulling-back.}
\label{F:sta}
\end{figure}

\begin{proof}
We prove this proposition by the contradiction method.
Let $\eps_1$ be determined in \lemref{Kf on SoKM: gauge bound}.
Owing to \thmref{short time: stability},
we assume there is a maximal time $T$ such that
$$|\vphi|_{C^{2,\a}}<\eps_1\text{ on }[0,T)\text{ and }
|\vphi(T)|_{C^{2,\a}}=\eps_1.$$
According to \thmref{regularity of PCF: reg of PCF}
we obtain $|\vphi(T)|_{C^{k,\a}}\leq C(k,\eps_1,\frac{T}{2})$ on $[\frac{T}{2},T]$. So we get $$\vphi(T)\in\mathcal{S}(\eps_1,C(k,\eps_1,\frac{T}{2})).$$
There are two situations.
If $\vphi(T)$ is a K\"ahler-Einstein metric,
the flow will stop
here and our theorem is proved.
Otherwise, we will extend the flow as follows.
We first choose $\eps_0$ small enough to guarantee
$$\nu_\om(\vphi_0)\leq o(\frac{\eps_1}{2},\mathcal{S}(\eps_1,C(k,\eps_1,\frac{T}{2})))$$
where the constant $o(\frac{\eps_1}{2},\mathcal{S}(\eps_1,C(k,\eps_1,\frac{T}{2})))$ is determined
in \lemref{PCF on SoKM: norm bound by enery}.
Let the holomorphic transformation $\sigma$ be the projection of $\vphi(T)$ in $\mathcal{E}_0$ with
$\sigma^\ast\om
=\om+\sqrt{-1}\p\bar\p\rho\nonumber$. We set $\vphi_1^0$ be the K\"ahler potential of the metric pulled back by $\sigma$, i.e.
$$(\sigma^{-1})^\ast\om_{\vphi(T)}
=\om+\sqrt{-1}\p\bar\p[(\sigma^{-1})^\ast(\vphi(T)-\rho)]
=\om+\sqrt{-1}\p\bar\p\vphi_1^0.$$
Since the $K$-energy is decreasing along the K\"ahler-Ricci flow, we obtain according to \lemref{PCF on SoKM: energy decay},
\begin{align}\label{PCF on SoKM: energy tra}
\nu_\om(\vphi_1^0)\leq o(\frac{\eps_1}{2},\mathcal{S}(\eps_1,C(k,\eps_1,\frac{T}{2}))).
\end{align}
So \lemref{PCF on SoKM: norm bound by enery} implies that
\begin{align}\label{PCF on SoKM: itr small norm}
|\psi|_{C^{2,\a}(g)}
=|(\sigma^{-1})^\ast(\vphi(T)-\rho)|_{C^{2,\a}(g)}
<\frac{\eps_1}{2}.
\end{align}
We next show that the K\"ahler-Ricci flow is invariant under the transformation. Let $\vphi_1=
(\sigma^{-1})^\ast(\vphi(t)-\rho)$. Since
\begin{align*}
\frac{\p}{\p t}\vphi_1
&=(\sigma^{-1})^\ast[\log\frac{\om^n_{\vphi}}{\om^n}+\vphi-\frac{1}{V}\int_M(\log\frac{\om^n_\vphi}{\om^n}
+\vphi)\om^n_\vphi]\\
&=(\sigma^{-1})^\ast[\log\frac{\om^n_{\vphi}}{\om_\rho^n}
+\vphi-\rho-\frac{1}{V}\int_M(\log\frac{\om^n_\vphi}{\om_\rho^n}
+\vphi-\rho)\om^n_\vphi]\\
&=\log\frac{\om^n_{\vphi_1}}{\om^n}
+\vphi_1-\frac{1}{V}\int_M(\log\frac{\om^n_{\vphi_1}}{\om^n}
+\vphi_1)\om^n_{\vphi_1}]
\end{align*}
The second equality follows form the fact that $\om_{\rho}$ is a K\"ahler-Einstein metric.
We conclude that
$\vphi_1$
is the solution of the equation of the form
\begin{equation}\label{PCF on SoKM: extend equ}
\begin{cases}
\frac{\p}{\p t}\vphi_1
&=\log\frac{\om^n_{\vphi_1}}{\om^n}+\vphi_1+a(t)\\
\vphi_1(0)&=\vphi_1^0=(\sigma^{-1})^\ast(\vphi(T)-\rho)
\end{cases}
\end{equation}
with \eqref{PCF on SoKM: itr small norm} and \eqref{PCF on SoKM: energy tra}.
Again, \thmref{short time: stability}
implies \eqref{PCF on SoKM: extend equ} has a solution on $[0,T_1]$ with $T_1\geq T$ such that $$|\vphi_1(T_1)|_{C^{2,\a}}=\eps_1.$$
Moreover, let $\vphi(t)=\sigma^\ast\vphi_1(t-T)+\rho$ on $[T,T+T_1)$,
the new $\vphi(t)$
is the solution of \eqref{KRF potential ke} on $[0,T+T_1]$.
Then we repeat the same steps inductively for
\begin{equation*}
\vphi_{s-1}(T_{s-1})\in\mathcal{S}(\eps_1,C(k,\eps_1,\frac{T}{2})),
\end{equation*}
with $T_{s-1}\geq T$ which is obtained in \thmref{short time: stability}
till $\vphi_s$ becomes a K\"ahler-Einstein at time $T_s$, if $T_s<\infty$.
If not, we have the K\"ahler-Ricci flow has the long time existence and the solution $\vphi(t)$ for all $t\geq0$ given by
$$\om_{\vphi(t)}
=\prod_{i=0}^{s-1}\sigma_{i}^\ast\om_{\vphi_{s}(t)}\text{ on } [\sum_{i=0}^{s-1}T_{i},\sum_{i=0}^{s}T_{i}).$$
For any sequence $\{\vphi_{t_j}\}$,
there is $s$ such that
$\sum_{i=0}^{s-1}T_{i}\leq t_j \leq\sum_{i=0}^{s}T_i$.
Furthermore, let $\varrho_{j}=(\prod_{i=0}^{s-1}\sigma_{i})^{-1}$. We have
\begin{align*}
|\varrho_{j}^\ast\om_{\vphi_{t_j}}
-\om|_{C^{\a}}\leq \eps_1 \text{ and }
|\varrho_{j}^\ast\om_{\vphi_{t_j}}
-\om|_{C^{k}}\leq C(k,\eps_1,\frac{T}{2}).
\end{align*}
Therefore all metrics are equivalent and their derivatives are bounded. We denote
$$\om_{\psi_{t_j}}=\varrho_{j}^\ast \om_{\vphi_{t_j}}.$$
It follows that by abuse of notation there is a subsequence of
$\om_{\psi_{t_j}}$
converges to a limit metric $\om_\infty$. However, $\om_\infty$
depends on the choice of the subsequence.
Since the $K$-energy is bounded below, we have
$\lim_{s\rightarrow\infty}\nu(\om,\om_{\psi_{t_j}})=0$.
It follows that $g_\infty$ is a K\"ahler-Einstein metric from \thmref{BM en}.
Consequently, this proposition is proved.
\end{proof}
Let $t_s=\sum_{i=0}^{s}T_i$.
Follow the same argument in Chen-Tian \cite{MR2219236},
we can first connect each disperse points by geodesics
in the space of K\"ahler-Einstein metric so that
$$\varrho(t)=\varrho(s)\exp((t-s)X_s),\forall t\in[s,s+1]$$
for $X_s$ is uniformly bounded by \lemref{Kf on SoKM: gauge bound}.
Then smooth the corner at each points $t_s$ by replacing the
broken line by a smooth curve in small neighborhood near $t_s$
without changing the value and $t$ derivative in the end points.
Hence we have extended the holomorphic transformation to each $t$
so that it is Lipschitz continuous in $t$.

\section{Exponential convergence}\label{EC}
In this section, we show that the sequence
of holomorphic transformations $\varrho(t)$ are compact and the exponential convergence of the K\"ahler-Ricci flow. Let $\om_{\psi(t)}=\varrho(t)^\ast \om_{\vphi(t)}.$
We have already obtained
$\lim_{t\rightarrow\infty}Ric(g_{\psi_{t}})-\om_{\psi_{t}}=0$ in Section \ref{NHVF}.
Since the holomorphic transformation keeps this identity invariant, we obtain
\begin{align}\label{ricci}
\lim_{t\rightarrow\infty}Ric(g_{\vphi_{t}})-\om_{\vphi_{t}}=0.
\end{align}
After taking the trace, we get
\begin{align}\label{scalar}
\lim_{t\rightarrow\infty}\tri_{\vphi_t}\dot\vphi
=\lim_{t\rightarrow\infty}S(g_{\vphi_t})-n=0.
\end{align}
Now for each time $t$, we apply the De Giorgi iteration
to derive the $L^\infty$ bound of $\dot\vphi$
under the normalization condition \eqref{normalized cond}. In the following, the constant $C$ may be different from line to line.
\begin{lem}\label{dot vphi}Along the K\"ahler-Ricci flow, we have
\begin{align}
||\dot\vphi||_\infty\leq C||S-n||_\infty \text{ for any } t>0.
\end{align}
\end{lem}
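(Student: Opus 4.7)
\medskip
\noindent\textbf{Proof plan.} First I would derive a linear elliptic equation for $\dot\vphi$ at each fixed time. Applying $\sqrt{-1}\p\bar\p$ to \eqref{KRF potential ke} and using $Ric(\om)=\om$ (since $\om$ is K\"ahler--Einstein) gives
\begin{equation*}
\sqrt{-1}\p\bar\p\dot\vphi=\om_\vphi-Ric(\om_\vphi),
\end{equation*}
so that $\tri_\vphi\dot\vphi=n-S_\vphi$, i.e.\ $\dot\vphi$ solves a Poisson equation with right-hand side controlled by $\|S-n\|_\infty$. The normalization \eqref{normalized cond} provides $\int_M\dot\vphi\,\om_\vphi^n=0$, which is what makes an $L^\infty$ bound in terms of the right-hand side possible.

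Next I would produce an $L^2$ bound. Multiplying $\tri_\vphi\dot\vphi=n-S_\vphi$ by $\dot\vphi$ and integrating by parts yields
\begin{equation*}
\int_M|\nabla\dot\vphi|^2_{\vphi}\,\om_\vphi^n
=\int_M\dot\vphi(S_\vphi-n)\,\om_\vphi^n\le\|\dot\vphi\|_{L^2(\om_\vphi)}\,\|S_\vphi-n\|_{L^2(\om_\vphi)}.
\end{equation*}
Since $\dot\vphi$ has zero mean with respect to $\om_\vphi^n$, the Poincar\'e inequality for $\om_\vphi$ gives $\|\dot\vphi\|_{L^2(\om_\vphi)}\le C_P\|\nabla\dot\vphi\|_{L^2(\om_\vphi)}$, and combining these yields $\|\dot\vphi\|_{L^2(\om_\vphi)}\le C\|S-n\|_\infty$.

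Then I would run a standard De Giorgi--Nash--Moser iteration. Testing the equation against $|\dot\vphi|^{2p-2}\dot\vphi$ and integrating by parts controls $\|\nabla|\dot\vphi|^p\|_{L^2(\om_\vphi)}^2$ by $p\,\|S-n\|_\infty\|\dot\vphi\|_{L^{2p-1}}^{2p-1}$; the Sobolev inequality for $\om_\vphi$ then upgrades this to an $L^{\frac{2np}{n-1}}$ bound on $\dot\vphi$, and iterating over a geometric sequence of exponents $p_k$ produces
\begin{equation*}
\|\dot\vphi\|_\infty\le C\bigl(\|\dot\vphi\|_{L^2(\om_\vphi)}+\|S-n\|_\infty\bigr)\le C\|S-n\|_\infty.
\end{equation*}

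The main obstacle, and the reason this cannot be applied naively at an arbitrary time along the unpulled-back flow, is that the Poincar\'e and Sobolev constants for $\om_{\vphi(t)}$ must be controlled uniformly in $t$. This is exactly where the earlier sections pay off: after the diffeomorphism $\varrho(t)$, the metric $\om_{\psi(t)}=\varrho(t)^{\ast}\om_{\vphi(t)}$ lies in $\mathcal{S}(\eps_1,C(k,\eps_1))$, so it is uniformly equivalent to $\om$ with bounded geometry; since the Sobolev and Poincar\'e constants are diffeomorphism invariants, the same uniform constants work for $\om_{\vphi(t)}$. With the constants uniform, the three steps above combine to give the claimed pointwise bound with a single constant $C$ independent of $t$.
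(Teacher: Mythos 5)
Your proposal is correct and follows essentially the same route as the paper: derive $\tri_\vphi\dot\vphi=n-S_\vphi$ from the flow equation, use the normalization $\int_M\dot\vphi\,\om_\vphi^n=0$ together with the Poincar\'e inequality to get the $L^2$ bound, invoke the diffeomorphism invariance of the Sobolev and Poincar\'e constants to make everything uniform in $t$, and then run a De Giorgi--Nash--Moser iteration to reach $L^\infty$. The only (immaterial) difference is in the last step: you iterate exponents globally in Moser's style, whereas the paper first establishes a fixed $L^{\frac{2np}{2n+p}}$ bound and then performs a local De Giorgi level-set truncation with cut-off functions; both are standard and yield the same estimate.
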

\begin{proof}
We first notice that $\dot\vphi$ satisfies the equation
\begin{align*}
\tri_{\vphi}\dot\vphi=-S(g_{\vphi_t})+n.
\end{align*}
Then we multiply this equation with $\dot\vphi^{2q-1}$ and integrate on $M$ to get
\begin{align}\label{dvphi it}
\frac{2q-1}{q^2}
\int_{M}|\nabla(\dot\vphi^q)|^2\om^n_\vphi
=\int_{M}\dot\vphi^{2q-1}(S-n)\om^n_\vphi.
\end{align}
Since $\om_\vphi=\sigma^\ast\om_\psi$ and
the Sobolev constant and Poincare constant are invariant under the holomorphic transformation, we have they are uniformly bounded. Let $q=1$, we have
\begin{align}\label{q=1}
\int_{M}|\nabla\dot\vphi|^2\om^n_\vphi
=\int_{M}\dot\vphi(S-n)\om^n_\vphi.
\end{align}
We apply the Poincare inequality and \eqref{normalized cond} to \eqref{q=1} to get
\begin{align*}
||\dot\vphi||^2_{L^{2}}
\leq C_p^2\int_{M}\dot\vphi(S-n)\om^n_\vphi.
\end{align*}
Here $C_p$ is the Poincare constant.
Then we use the $\frac{1}{2}$-H\"older inequality to get
\begin{align}\label{dvphi L2}
||\dot\vphi||^2_{L^{2}}
&\leq \frac{1}{2}||\dot\vphi||^2_{L^{2}}+\frac{1}{2}C^4_p||S-n||^2_\infty\nonumber\\
&\leq C^4_pC(\delta)||S-n||^2_\infty
\end{align}
and apply the $\frac{1}{2}$-H\"older inequality to \eqref{q=1} again to get
\begin{align*}
\int_{M}|\nabla\dot\vphi|^2\om^n_\vphi
\leq||\dot\vphi||^2_{L^2}+||S-n||^2_\infty.
\end{align*}
Accordingly, combining with \eqref{dvphi L2} we obtain
\begin{align}\label{dvphi 1,2}
||\dot\vphi||_{W^{1,2}}
\leq C||S-n||_\infty.
\end{align}
Let $p>n$.
We claim: \begin{align}\label{dvphi q}
||\dot\vphi||_{\frac{2np}{2n+p}}
\leq C||S-n||_\infty.
\end{align}
When $n=1$, the claim obviously follows from \eqref{dvphi 1,2} by the Sobolev imbedding theorem.
Since the Sobolev constant is bounded, we can use the Sobolev inequality
\begin{align}\label{solblev inequ}
C_1(\int_M|f|^{\frac{2n}{n-1}}\om_\vphi^{n})^\frac{n-1}{n}
\leq C_2\int_M|f|^2\om_\vphi^{n}+\int_M|\nabla f|^2\om_\vphi^{n}.
\end{align}
For any $n\geq 2$,
we apply the Sobolev inequality \eqref{solblev inequ} to the left side of \eqref{dvphi it} and the H\"older inequality to the right one to obtain
\begin{align*}
(\int_M|\dot\vphi|^{\frac{2nq}{n-1}}
\om_\vphi^{n})^\frac{n-1}{n}
\leq C\int_M|\dot\vphi|^{2q}\om^n_\vphi
+\int_M(S-n)^{2q}\om^n_\vphi.
\end{align*}
Setting $q=\frac{(n-1)p}{2n+p}$, we have
\begin{align}\label{dvphi claim}
(\int_M|\dot\vphi|^{\frac{2np}{2n+p}}
\om_\vphi^{n})^\frac{n-1}{n}
\leq C\int_M|\dot\vphi|^{\frac{2(n-1)p}{2n+p}}\om^n_\vphi
+\int_M(S-n)^{\frac{2(n-1)p}{2n+p}}\om^n_\vphi.
\end{align}
Since the $L^p$ interpolation inequality implies
\begin{align*}
||\dot\vphi||_{\frac{2(n-1)p}{2n+p}}\leq \eps||\dot\vphi||_{\frac{2np}{2n+p}}
+C(\eps)||\dot\vphi||_{2},
\end{align*}
we have by \eqref{dvphi 1,2}
\begin{align}\label{dvphi p int}
||\dot\vphi||_{\frac{2(n-1)p}{2n+p}}\leq \eps||\dot\vphi||_{\frac{2np}{2n+p}}
+C(\eps)||S-n||_\infty.
\end{align}
Hence substituting \eqref{dvphi p int} into \eqref{dvphi claim} we obtain the claim provided $C\eps=\frac{1}{2}$.

Next, let $x$ be any point in $M$ and $\eta$ be a smooth cut-off function defined in the closed ball $B_{2r}(x)$ centered at the point $x$ such that $\eta$ equals to $1$ within $B_{\frac{1}{2}r}(x)$ and vanishes outside $B_{r}(x)$. In $B_{2r}(x)$, by chain and product rules $\eta\dot\vphi$ satisfies
\begin{align}\label{eta vphi}
\tri_{\vphi}(\eta\dot\vphi)&=(\eta-2\tri\eta)(-S(g_{\vphi_t})+n)
+\tri_\vphi\eta\dot\vphi
+2\nabla^i(\nabla_i\eta(-S(g_{\vphi_t})+n))\nonumber \\
&\doteq f+\nabla^i g_i
\end{align}
for $f=(\eta-2\tri\eta)(-S(g_{\vphi_t})+n)
+\tri_\vphi\eta\dot\vphi$ and $g=2(\nabla_i\eta(-S(g_{\vphi_t})+n))$.
Set $F=(||f||_{\frac{2np}{2n+p}}+||g_i||_{p})$ and the test function $\psi$ be $\max\{k,\eta\dot\vphi\}$ for any fixed $k>0$. Multiplying \eqref{eta vphi} with $\psi$ and integrating by part on $M$, we have
\begin{align}\label{dvphi local}
\int_M|\nabla\psi|^2\om_\vphi^n=-\int_M f\psi\om_\vphi^n+\int_M g_i\psi^i\om_\vphi^n.
\end{align}
By using the H\"older inequality to the right side of \eqref{dvphi local} we obtain
\begin{align*}
||\nabla\psi||^2_{2}\leq F(||\nabla\psi||_{2}+||\psi||_{\frac{2n}{n-1}})|A(k)|^{\frac{1}{2}-\frac{1}{p}}
\end{align*}
for $A(k)=\{y\in B_r(x)\vert(\eta\dot\vphi)(y)>k\}$.
Using the Sobolev inequality \eqref{solblev inequ} to $\psi$, we have
\begin{align}\label{solblev inequ loc}
C_1(\int_M|\psi|^{\frac{2n}{n-1}}\om_\vphi^{n})^\frac{n-1}{n}
&\leq C_2\int_M|\psi|^2\om_\vphi^{n}+\int_M|\nabla \psi|^2\om_\vphi^{n}\nonumber\\
&\leq C_2|B_r|^{\frac{1}{n}}
(\int_M|\psi|^{\frac{2n}{n-1}}\om_\vphi^{n})^\frac{n-1}{n}
+\int_M|\nabla \psi|^2\om_\vphi^{n}\nonumber\\
&\leq 2\int_M|\nabla \psi|^2\om_\vphi^{n}
\end{align} when we choose $r$ small enough so that $C_2|B_r|^{\frac{1}{n}}\leq\frac{C_1}{2}$.
Furthermore, using \eqref{solblev inequ loc} we derive
\begin{align*}
||\nabla\psi||^2_{2}\leq CF||\nabla\psi||_{2}|A(k)|^{\frac{1}{2}-\frac{1}{p}}.
\end{align*}
Then we apply the H\"older inequality to obtain
\begin{align*}
||\nabla\psi||^2_{2}&\leq \frac{1}{2}||\nabla\psi||^2_{2}+ \frac{1}{2}C^2F^2|A(k)|^{1-\frac{2}{p}}\\
&\leq C^2F^2|A(k)|^{1-\frac{2}{p}}.
\end{align*}
Now using \eqref{solblev inequ loc} again we derive
\begin{align*}
||\psi||_{\frac{2n}{n-1}}\leq CF|A(k)|^{\frac{1}{2}-\frac{1}{p}}
\end{align*}
which provides for any $h>k$,
\begin{align*}
|A(h)|\leq (\frac{CF}{h-k})^{\frac{2n}{n-1}}|A(k)|^{\frac{n(p-2)}{p(n-1)}}.
\end{align*}
Let $d=CF
|A(0)|^{\frac{p-2n}{2np}}2^{\frac{n(p-2)}{p-2n}}$ and $k_s=d(1-\frac{1}{2^s})$.
It is proved inductively that $$|A(k_{s+1})|\leq |A(0)|2^{-\frac{2nps}{p-2n}}.$$ Consequently, $|A(d)|=0$ as $s\rightarrow\infty$. In other words, we obtain
\begin{align*}
\dot\vphi\leq CV^{\frac{p-2n}{2np}}2^{\frac{n(p-2)}{p-2n}}
(||\dot\vphi||_{\frac{2np}{2n+p}}+||S-n||_\infty)
 \text{ in } B_{\frac{1}{2}r}(x).
\end{align*}
Finally, this estimate together with \eqref{dvphi q} provides the global supper bound of $\dot\vphi$ we desired. Similarly, we obtain the global lower bound.
\end{proof}
\begin{rem}
Given the bound of the scalar curvature \eqref{scalar} and the $K$-energy's lower bound, it is available to apply Phong-Sturm's argument \cite{MR2215459} to get
$\int_M|\nabla\dot\vphi|^2\om^n_\vphi\rightarrow0.$
Then we apply the Poincare inequality and the normalization condition \eqref{normalized cond} to obtain
$\int_M|\dot\vphi|^2\om^n_\vphi\rightarrow0.$
\end{rem}
The evolution of
$\mu_0(t)=\frac{1}{V}\int_M\dot\vphi^2\om_\vphi^n$ is given by
\begin{align*}
\frac{\p}{\p t}\mu_0(t)
=-2\int_M(1+\dot\vphi)|\nabla\dot\vphi|^2\om^n_\vphi
+2\int_M\dot\vphi^2\om^n_\vphi.
\end{align*}
Then following the same argument in Chen-Tian \cite{MR1893004},
we use the fact the the spectrum of $g_t$ converges to
the spectrum of the K\"ahler-Einstein metric
and the Futaki invariant is zero
to obtain
\begin{align}\label{0 exponential}
\mu_0(t)=\frac{1}{V}\int_M\dot\vphi^2\om_\vphi^n
\leq\mu_0(0)e^{-\theta t}.
\end{align}
Moreover it is direct to compute that (see for Page 539 in Chen-Tian \cite{MR1893004})
\begin{align}\label{l exponential}
\mu_l(t)=\frac{1}{V}\int_M|\nabla^l\dot\vphi|^2\om_\vphi^n
\leq\mu_l(0)e^{-\theta t}.
\end{align}
We apply the Sobolev imbedding theorem to obtain
\begin{align}\label{potential l exponential}
|\dot\vphi|_{C^l(g_\vphi)}\leq Ce^{-\theta t}.
\end{align}
We use $\vphi(t)=\vphi(0)+\int_0^1\dot\vphi dt$ and \eqref{potential l exponential} to obtain the $C^0$ estimate
\begin{align}
|\vphi(t)|_{C^0}\leq|\vphi(0)|_{C^0}+Ce^{-\theta t}.
\end{align}
From the equation
$\dot\vphi-\log\frac{\om^n_\vphi}{\om^n}
=\vphi+a(t),$
Yau's estimate \cite{MR480350} (see \cite{MR799272}) gives the second order estimate
$0<n+\tri\vphi\leq C$.
It follows $g$ and $g_\vphi$ are equivalent.
So we have $|\dot\vphi-\vphi-a(t)|_{C^2}\leq C$
thanks to the uniform bound of $a(t)$ given by \eqref{a} and \eqref{potential l exponential}.
Then the $C^{2,\a}$ estimates by Evans \cite{MR649348} and Krylov \cite{MR661144}
shows $\vphi$ has uniform $C^{2,\a}$ bound. Moreover, \thmref{regularity of PCF: reg of PCF} provides the uniform bound on all higher order derivatives. Then $g$ and $g_\phi$ are $C^\infty$ equivalent.
So \eqref{l exponential} implies
$$|\vphi-\vphi_\infty|_{C^{k}(g_\infty)}\leq C_ke^{-\theta t},\forall k\geq0.$$
Therefore, we have obtained the exponential convergence of the K\"ahler-Ricci flow.
\begin{prop}\label{sta}
If the K\"ahler-Ricci flow converges to a K\"ahler-Einstein metric in Cheeger-Gromov sense, i.e. for any sequence $g(t_i)$,
there is a subsequence $g(t_{i_j})$
and the holomorphic transformation $\varrho(t_{i_j})$ such that
$\varrho(t_{i_j})^\ast g(t_{i_j})$ converges smoothly to a K\"ahler-Einstein metric $g_\infty$. Then the K\"ahler-Ricci flow must converge exponentially to
a unique K\"ahler-Einstein metric nearby.
\end{prop}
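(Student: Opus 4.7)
The plan is to pass from the (gauge-modified) Cheeger--Gromov convergence to exponential convergence of the unmodified K\"ahler--Ricci flow, exploiting that the Einstein defect, the Sobolev constant, and the Poincar\'e constant are all invariant under holomorphic transformation. First, I would note that since $\varrho(t_{i_j})^\ast g(t_{i_j})\to g_\infty$ smoothly and $g_\infty$ is K\"ahler--Einstein, the tensor $\mathrm{Ric}(\varrho(t_{i_j})^\ast g(t_{i_j}))-\varrho(t_{i_j})^\ast\om(t_{i_j})$ tends to zero; because $\varrho(t)$ is a biholomorphism, this defect is preserved, so \eqref{ricci} and, taking traces, \eqref{scalar} follow for the untransformed flow. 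Lemma \ref{dot vphi} (whose De Giorgi iteration uses only the uniformly bounded Sobolev and Poincar\'e constants of $g_\vphi$, available because $\om_\vphi=\varrho^\ast\om_\psi$) then converts the $L^\infty$ bound on $S-n$ into an $L^\infty$ bound on $\dot\vphi$, so $\|\dot\vphi\|_\infty\to 0$.

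Next, I would study the evolution of $\mu_l(t)=\frac{1}{V}\int_M|\nabla^l\dot\vphi|^2\om_\vphi^n$ along the lines of Chen--Tian. Differentiating $\mu_0$, integrating by parts, and using the normalization \eqref{normalized cond} yields an inequality of the form
\begin{equation*}
\frac{d}{dt}\mu_0(t)\le -2\bigl(\lambda_1(g_{\vphi(t)})-1\bigr)\mu_0(t)+o(1)\,\mu_0(t).
\end{equation*}
Since the spectrum of $g_{\vphi(t)}$ converges to that of $g_\infty$ under Cheeger--Gromov convergence, and since the Futaki invariant vanishes (so $g_\infty$ is K\"ahler--Einstein with no holomorphic obstruction to extremality in the relevant direction), the first nonzero eigenvalue $\lambda_1(g_\infty)\ge 1$ with a strict gap on the subspace where $\dot\vphi$ lives. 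This gives the exponential decay \eqref{0 exponential}, and the analogous differential inequality for $\mu_l$ yields \eqref{l exponential}.

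Then I would bootstrap: the Sobolev embedding converts \eqref{l exponential} into $|\dot\vphi|_{C^l(g_\vphi)}\le Ce^{-\theta t}$ as in \eqref{potential l exponential}, integration in time gives $|\vphi(t)|_{C^0}\le|\vphi(0)|_{C^0}+Ce^{-\theta t}$, Yau's $C^2$ estimate applied to the Monge--Amp\`ere equation $\dot\vphi-\log\tfrac{\om_\vphi^n}{\om^n}=\vphi+a(t)$ (with $a(t)$ uniformly bounded by \corref{a(t) bdd}) gives the equivalence of $g$ and $g_\vphi$, Evans--Krylov gives uniform $C^{2,\alpha}$ bounds, and \thmref{regularity of PCF: reg of PCF} promotes these to all higher derivatives. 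Combining with the exponential decay of $\dot\vphi$ and writing $\vphi_\infty=\vphi(0)+\int_0^\infty\dot\vphi\,ds$ yields $|\vphi(t)-\vphi_\infty|_{C^k(g_\infty)}\le C_ke^{-\theta t}$ for every $k$, which is exponential convergence to a unique limit.

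The main obstacle is step two, the exponential decay of $\mu_0(t)$. The subtlety is that on a K\"ahler--Einstein manifold with nontrivial holomorphic vector fields the eigenspace $\ker(-\triangle_\infty-1)$ is exactly the space of Killing potentials, so a naive spectral gap does not exist; one must verify that along the gauge-normalized flow the component of $\dot\vphi$ in this eigenspace decays separately. This is where the vanishing (or compensation via $\varrho(t)$) of the Futaki-type obstruction must be used carefully, together with the smooth convergence of the spectrum of $g_{\vphi(t)}$ to that of $g_\infty$, to produce a genuine uniform exponent $\theta>0$.
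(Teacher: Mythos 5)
Your proposal follows essentially the same route as the paper's Section \ref{EC}: transferring the Einstein defect through the gauge, the De Giorgi iteration of Lemma \ref{dot vphi}, the Chen--Tian energy decay for $\mu_l(t)$ via spectral convergence and the vanishing Futaki invariant, and the Yau/Evans--Krylov bootstrap to full exponential convergence. The spectral-gap subtlety you flag at the end is real, and the paper handles it only by deferring to Chen--Tian \cite{MR1893004}, so your plan is as complete as the paper's own argument.
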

\section{K\"ahler-Ricci soliton}\label{KRS}
In this section we generalize our above argument to the K\"ahler-Ricci solitons. According to Fujiki \cite{MR0481142},
The identity part of holomorphic transformation group $Aut_0(M)$ is meromorphically isomorphic to a linear algebraic group $L(M)$ and such that the quotient $Aut_0(M)/L(M)$ is a complex torus. In Futaki-Mabuchi's work \cite{MR1314584}, they used the Chevalley decomposition to $L(M)$ to obtain
a semidirect decomposition
$$Aut_0(M)=Aut_r(M)\ltimes R_u.$$ Here $Aut_r(M)$ is the reductive algebra group which is the complexification of a maximal compact subgroup $K$ and $R_u$ is the unipotent radical of $Aut_0(M)$.
Let $\eta_r$ be the Lie algebra of $Aut_r(M)$.
A K\"ahler metric $\om$ is called K\"ahler-Ricci soliton,
if there is a holomorphic vector field $X$ such that
\begin{align*}
L_X\om=Ric-\om.
\end{align*}
Tian-Zhu in \cite{MR1768112} proved
the uniqueness of K\"ahler-Ricci soliton for a fixed $X$ in the Lie algebra of $Aut_0(M)$:
\begin{thm}(Tian-Zhu \cite{MR1768112})\label{tz uni}
If $(\om,X)$ and $(\om',X)$ are two K\"ahler-Ricci solitons,
then there are holomorphic transformation groups
$\sigma\in Aut_0(M)$ and $\tau\in Aut_r(M)$ such that
$\sigma^\ast\om=\tau^\ast\sigma^\ast\om'$
and $\sigma^\ast X\in\eta_r$.
\end{thm}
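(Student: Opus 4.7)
The plan is to adapt the Bando-Mabuchi uniqueness argument for K\"ahler-Einstein metrics to the soliton setting, exploiting the compact torus generated by $\Im X$. The first step is to reduce to the situation where $X$ lies in the Lie algebra $\eta_r$ of a maximal reductive subgroup $Aut_r(M)$: by Fujiki's structure of $Aut_0(M)$ and the Chevalley decomposition used above, there is $\sigma\in Aut_0(M)$ conjugating $X$ into $\eta_r$, giving the second conclusion at once. Replacing $\om$ and $\om'$ by $\sigma^\ast\om$ and $\sigma^\ast\om'$, we may assume $X\in\eta_r$, so that $\Im X$ generates a subtorus of a maximal compact $K\subset Aut_r(M)$. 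Since both soliton equations force invariance under $K_X$ (from $L_{\Im X}\om = L_{\Im X}\om'=0$), we may further assume $\om$ and $\om'$ are both $K_X$-invariant representatives of $2\pi C_1(M)$, and write $\om'=\om+\sqrt{-1}\p\bar\p\psi$ for a $K_X$-invariant $\psi$. The task is now to produce $\tau\in Aut_r(M)$ with $\tau^\ast\om'=\om$.

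The second step is the continuity method. Introduce the one-parameter family of modified Monge--Amp\`ere equations
\[
(\om+\sqrt{-1}\p\bar\p\phi_t)^n=e^{h_\om-\theta_X(\phi_t)-t\phi_t}\,\om^n,\qquad t\in[0,1],
\]
where $\theta_X(\phi_t)$ is the normalized holomorphy potential of $X$ with respect to $\om_{\phi_t}$. A $K_X$-invariant solution at $t=1$ is exactly a K\"ahler-Ricci soliton with vector field $X$, and both $\om$ and $\om'$ arise this way. Openness follows from the implicit function theorem using the modified Lichnerowicz operator $L_X$ introduced by Tian-Zhu, whose kernel on $K_X$-invariant functions coincides with the real holomorphy potentials of vector fields in the centralizer of $X$ inside $\eta_r$; closedness relies on the soliton a priori estimates. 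The aim is to connect the two putative solitons by such a deformation and track them back through $t<1$.

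The third step is the convexity argument. Define the modified Mabuchi energy $\nu_X$ on the space $\mathcal{H}^{K_X}$ of $K_X$-invariant K\"ahler potentials whose critical points are precisely K\"ahler-Ricci solitons with vector field $X$. One shows, following the Mabuchi--Donaldson--Semmes structure applied to $\mathcal{H}^{K_X}$, that $\nu_X$ is convex along $C^{1,1}$ geodesics and that its Hessian vanishes exactly in directions tangent to the $Aut_r(M)$-orbit through a soliton. Since $\om$ and $\om'$ both achieve the infimum of $\nu_X$, the geodesic connecting them lies in the critical set, which integrates to a holomorphic one-parameter family in $Aut_r(M)$ carrying $\om$ to $\om'$, producing the required $\tau\in Aut_r(M)$.

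The main obstacle is the final convexity and integration step: $C^{1,1}$ geodesic regularity is in general insufficient to integrate along a holomorphic vector field, and the modified $\nu_X$ is only weakly convex because of the infinite-dimensional direction of symmetries. This is handled by exploiting the $K_X$-invariance, which cuts the kernel of the Hessian of $\nu_X$ down to the finite-dimensional centralizer $\eta_X^r\subset\eta_r$, together with Zhu's a priori estimate showing the connecting geodesic is smooth along the soliton locus. Once strict convexity is established transverse to this finite-dimensional kernel, the reduction to a finite-dimensional symmetric space argument (mirroring the use of $\mathcal{E}_0$ in the K\"ahler-Einstein case of this paper) closes the proof.
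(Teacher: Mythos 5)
First, note that the paper does not prove this statement: Theorem~\ref{tz uni} is quoted verbatim from Tian--Zhu's uniqueness paper \cite{MR1768112} and used as a black box (to identify $\mathcal{E}_0$ with a single $Aut_r(M)$-orbit), so there is no in-paper argument to measure you against. Your first step is essentially correct and matches the actual proof: since $\Im X$ is Killing for $\om$, its closure generates a compact torus, which can be conjugated into a maximal compact $K\subset Aut_r(M)$, whence $\sigma^\ast X\in\eta_r$; and your continuity family $(\om+\sqrt{-1}\p\bar\p\phi_t)^n=e^{h_\om-\theta_X-X(\phi_t)-t\phi_t}\om^n$ is exactly the Tian--Zhu deformation.

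The genuine gap is in your third step, which is also where you abandon the continuity method you just set up. Tian--Zhu do not prove uniqueness by convexity of the modified $K$-energy along geodesics; in 2000 neither the convexity of $\nu_X$ along $C^{1,1}$ geodesics nor the statement that solitons minimize $\nu_X$ was available (the latter is the later result recorded here as Theorem~\ref{sol lb}, and convexity along weak geodesics is a Berndtsson-type theorem from well after). Worse, your proposed repair is circular: "the connecting geodesic is smooth along the soliton locus" and "the kernel of the Hessian is exactly the tangent space to the $Aut_r(M)$-orbit" are consequences of the uniqueness theorem (they presuppose that the critical set \emph{is} a single finite-dimensional orbit), not inputs to it. The argument that actually closes the proof is the Bando--Mabuchi bifurcation analysis carried through your step two: for $t<1$ the linearization $\tri_{\phi_t}+X+t$ on $K_X$-invariant functions (with the weighted inner product $\int_M f\bar g\,e^{\theta_X(\phi)}\om_\phi^n$) is injective because the first nonzero eigenvalue of $\tri+X$ is $\geq 1$ with equality exactly on holomorphy potentials, so solutions are unique for $t\in(0,1)$; at $t=1$ one perturbs backward to $t=1-\eps$ after adjusting each soliton by an element of $Aut_r(M)$ to kill the cokernel, and then the two backward paths must coincide, forcing the two solitons to agree up to the automorphisms used. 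Replacing your step three by this backward-uniqueness argument yields the theorem; as written, the proposal does not.
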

Without loss of generality, we may assume $X\in \eta_r(M)$.
Since $L_{\Im X}\om=0$, $\Im X$ generates a one-parameter isometric group $K_X$. We further choose $K$ such that $K_X\subseteq K$.
According to Proposition 2.1 in Tian-Zhu \cite{MR1915043}, $X$ lies in the center of $\eta_r$.

Since there is a real value function $\theta_X$ such that
$L_X\om=\sqrt{-1}\p\bar\p\theta_X$ with $\int_Me^{\theta_X}\om^n=V$ by the Hodge theory.
Then the potential equation of the K\"ahler-Ricci flow \eqref{KRF potential} is
\begin{equation}
\begin{cases}
\frac{\p\vphi}{\p t}&=\log\frac{\om^n_\vphi}{\om^n}
+\vphi-\theta_X+a(t)\\
\vphi(0)&=\vphi_0.
\end{cases}
\end{equation}
We choose $$a(t)=-\frac{1}{V}\int_M(\log\frac{\om^n_\vphi}{\om^n}
+\vphi-\theta_X)\om^n_\vphi$$ and $I(\vphi_0)=0$ so that the K\"ahler-Ricci flow stays in $\mathcal{H}_0$.
Perelman in \cite{Perelman1} defined a functional called $W$ functional,
\begin{align*}
\mathcal{W}(g,f,\tau)
=(4\pi\tau)^{-\frac{n}{2}}\int_M[\tau(|\nabla f|^2+S)+f-n]e^{-f}dV
\end{align*}
which is invariant under diffeomorphism $\sigma$ and scaling $C$:
$\mathcal{W}(C\sigma^\ast g,\sigma^\ast f,C\tau)=\mathcal{W}(g,f,\tau)$.
And the $\mu$ functional is defined by
\begin{align}
\mu(g,\tau)
=\inf_{(4\pi\tau)^{-\frac{n}{2}}\int_Me^{-f}dV=1}
\mathcal{W}(g,f,\tau)
\end{align}
which is also invariant under diffeomorphism.
The minimum is achieved by some smooth function $f$ satisfing
$\tau[(2\tri f-|\nabla f|^2)+S]+f-n=\mu(g,\tau)$.
The first variation of $\mu(g,\tau)$ at $g'_{ij}=v_{ij}$ for fixed $\tau$ is
\begin{align*}
\mu'(v_{ij},\tau)
&=(4\pi\tau)^{-\frac{n}{2}}
\int_M\{-\tau(v_{ij},Ric+D^2f-\frac{1}{2\tau}g)\}e^{-f}dV_g.
\end{align*}
So the (shrinking) K\"ahler-Ricci soliton is the critical point of $\mu(g,\tau=\frac{1}{2})$.
The gradient flow of the $\mu$ functional equals to \eqref{KRF} with $\l=1$ up to a diffeomorphism generated by $\nabla f$.
So the $\mu$ functional is nondecreasing along the Ricci flow.
Tian-Zhu (see Proposition 2.1 in Tian-Zhu \cite{zhu-2008}) calculated the second variant of this functional
near a K\"ahler-Ricci soliton in the canonical class.
\begin{thm}(Tian-Zhu \cite{zhu-2008})\label{MRF mini}
It holds
\begin{align}\label{2 mu}
\frac{\p^2}{dt^2}\mu(\om+\sqrt{-1}\p\bar\p\vphi)|_{t=0}\leq 0
\end{align} and the equality holds if and only if $\dot\vphi(0)$ is the real part of the holomorphic potential of some holomorphic vector field.
\end{thm}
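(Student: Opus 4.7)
The plan is to compute the second variation of $\mu(\cdot,\tfrac{1}{2})$ directly, exploiting the fact that the minimizer $f$ of $\mathcal{W}(g,\cdot,\tfrac{1}{2})$ satisfies an Euler-Lagrange PDE whose linearization at the soliton is well behaved. Fix a smooth path $\vphi(t)\in\mathcal{H}_0$ with $\vphi(0)=0$ and $\dot\vphi(0)=\psi$, set $\om_t=\om+\sqrt{-1}\p\bar\p\vphi(t)$, and let $f_t$ be the smooth minimizer of $\mathcal{W}(g_t,\cdot,\tfrac{1}{2})$ subject to the unit mass constraint. At $t=0$, combining $Ric(\om)-\om=L_X\om=\sqrt{-1}\p\bar\p\theta_X$ with the normalization of $h_\om$ identifies the minimizer as $f_0=h_\om-\theta_X$ up to an additive constant, and the critical point equation becomes $Ric(g_0)+D^2f_0-g_0=0$.

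First I would linearize. Write $u=\dot f(0)$. Differentiating the Euler-Lagrange equation
\[
\tfrac{1}{2}\bigl(2\tri f-|\nabla f|^2+S\bigr)+f-n=\mu,
\]
together with the constraint $(4\pi\tau)^{-n/2}\int_M e^{-f}dV=1$, at $t=0$ yields a linear elliptic equation of the form $L_{f_0}u=\Psi(\psi)$, where $L_{f_0}$ is essentially the drift Laplacian $\tri-\nabla f_0\cdot\nabla$ and $\Psi(\psi)$ is a second-order linear expression in $\psi$ arising from the linearization of the scalar curvature and the variation of the volume form. The useful feature of this PDE is not the explicit solution for $u$, but rather that when I differentiate $\mu(g_t,\tfrac{1}{2})=\mathcal{W}(g_t,f_t,\tfrac{1}{2})$ twice at $t=0$, the direct appearance of $u$ is multiplied by the Euler-Lagrange operator evaluated at the critical point and therefore drops out. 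The remaining coupling of $u$ with $\psi$ is then reabsorbed using $L_{f_0}u=\Psi(\psi)$ and integration by parts against the weighted measure $e^{-f_0}\om^n$.

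Second, I would assemble $\mu''(0)$ by combining the pure second variation of $\mathcal{W}$ in $g$ with this single cross term. Decomposing the Hessian of $\psi$ into its $(1,1)$, $(2,0)$ and $(0,2)$ parts via K\"ahler identities and substituting the soliton identity $Ric+D^2f_0=g_0$ at each occurrence of $Ric$ or $\tri$, all indefinite terms cancel and the computation collapses to
\[
\frac{d^2}{dt^2}\mu(\om+\sqrt{-1}\p\bar\p\vphi(t))\Big|_{t=0}
=-\frac{1}{V}\int_M|\nabla^{1,0}\nabla^{1,0}\psi|^2\,e^{-f_0}\om^n,
\]
where $\nabla^{1,0}\nabla^{1,0}\psi$ is the tensor with components $\psi_{,ij}$ in a holomorphic frame. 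This is manifestly nonpositive, and vanishes precisely when $\psi_{,ij}\equiv 0$; equivalently, when the $(1,0)$-gradient field $g^{i\bar j}\p_{\bar j}\psi\,\p_i$ is holomorphic, i.e.~$\psi$ is the real part of the holomorphic potential of that vector field.

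I expect the main obstacle to be the bookkeeping in this second step: one must differentiate $\mathcal{W}$ twice, keep careful track of the several contributions involving $u$ and $\tri_\om\psi$, and apply the soliton identity $Ric+D^2f_0=g_0$ at precisely the right moments so that every indefinite cross term is eliminated and only $|\nabla^{1,0}\nabla^{1,0}\psi|^2$ survives. It is this cancellation, forced by the soliton equation itself, that both produces the sign of $\mu''(0)$ and pins down its kernel as the space of real parts of holomorphic potentials, giving the equality case at once.
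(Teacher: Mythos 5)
The paper does not prove this statement; it is quoted from Tian--Zhu \cite{zhu-2008} (their Proposition 2.1), so your proposal has to be measured against the computation in that reference rather than against anything internal to this paper. Your outline does follow the same route as that reference: differentiate $\mu(g_t,\tfrac12)=\mathcal{W}(g_t,f_t,\tfrac12)$ twice, use the minimality of $f_t$ to kill the first-order appearance of $\dot f$, eliminate the surviving cross term via the linearized Euler--Lagrange equation, and reduce to the weighted $L^2$-norm of the $(2,0)$-Hessian of $\psi=\dot\varphi(0)$, whose kernel is exactly the space of real holomorphy potentials. So the strategy is the right one.

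There are, however, two concrete problems. First, your identification of the minimizer is wrong: at a K\"ahler--Ricci soliton one has $\sqrt{-1}\partial\bar\partial h_\omega=Ric-\omega=\sqrt{-1}\partial\bar\partial\theta_X$, so with the paper's normalizations $h_\omega=\theta_X$ and your $f_0=h_\omega-\theta_X$ is identically zero; then $Ric(g_0)+D^2f_0-g_0=Ric-g_0\neq 0$, contradicting the critical-point equation you write in the same sentence. The correct minimizer is $f_0=-\theta_X$ up to the additive constant fixed by the unit-mass constraint; that is what makes $Ric+D^2f_0=g_0$ hold and what produces the weight $e^{-f_0}=e^{\theta_X}$ matching the weighted inner products used in Section \ref{KRS}. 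Second, the decisive step --- that after substituting the soliton identity ``all indefinite terms cancel'' and $\mu''(0)=-\frac1V\int_M|\psi_{,ij}|^2e^{-f_0}\omega^n$ --- is asserted rather than carried out, and this is where the entire content of the theorem lies: the cross term coupling $u=\dot f(0)$ to $\psi$, the $\triangle\psi$ contributions from the variation of the volume form, and the mass constraint each produce terms whose exact cancellation must be exhibited, and an uncancelled remainder would affect both the sign and, more delicately, the characterization of the equality case. One also needs uniqueness and smooth $t$-dependence of the minimizer $f_t$ near the soliton for $t\mapsto\mu(g_t)$ to be twice differentiable at all; Tian--Zhu address this point and your sketch does not. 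As written, the proposal is a correct plan anchored at a wrong identification of $f_0$ and with the main computation missing.
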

So the only directions at a K\"ahler-Ricci soliton $\om$ in \eqref{2 mu} vanishes are the the directions tangent to the orbit of $\om$ under the action of $Aut_0(M)$ and
we obtain the following local property of the $\mu$ functional.
\begin{lem}\label{MRF mini}
K\"ahler-Ricci soliton is the local maximum of $\mu(g)$ in the canonical class.
\end{lem}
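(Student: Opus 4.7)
The plan is to combine the diffeomorphism-invariance of $\mu$ with the second variation formula stated just before the lemma. Since $\mu(\sigma^\ast g)=\mu(g)$ for every $\sigma\in Aut_0(M)$, the functional is constant along the $Aut_0(M)$-orbit $\mathcal{O}_\om\subset\mathcal{H}_0$ of $\om$ inside the canonical class. The quoted second variation asserts $\frac{d^2}{dt^2}\mu|_{t=0}\leq 0$ with equality precisely when $\dot\vphi(0)$ is the real part of a holomorphic potential, i.e.\ exactly when the variation is tangent to $\mathcal{O}_\om$. Together these say that the only null directions of the Hessian are directions along which $\mu$ is already constant, which is the hallmark of a local maximum.

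To turn this into a proof I would first observe that $\mathcal{O}_\om$ is a smooth finite-dimensional submanifold of $\mathcal{H}_0$ (since $\dim\eta(M)<\infty$) whose tangent space at $0$ is spanned by the Hamiltonian potentials $\theta_Y$ of $L_Y\om$, $Y\in\eta(M)$. Next I would introduce the $L^2(\om)$-orthogonal complement $\mathcal{N}\subset T_0\mathcal{H}_0$ of $T_0\mathcal{O}_\om$ and apply a local slice theorem: the implicit function theorem yields, for every $\vphi$ with $\om_\vphi$ sufficiently close to $\om$, a unique decomposition $\vphi=\sigma^\ast\vphi'+\rho_\sigma$ (with the normalization $I=0$ absorbed into the constant) with $\sigma\in Aut_0(M)$ close to the identity and $\vphi'\in\mathcal{N}$ small. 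Invariance then yields $\mu(\om_\vphi)=\mu(\om_{\vphi'})$, so it suffices to prove $\mu(\om_{\vphi'})\leq\mu(\om)$ for small $\vphi'\in\mathcal{N}$.

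By construction no nonzero element of $\mathcal{N}$ is the real part of a holomorphic potential, so the quoted theorem makes the symmetric bilinear form $\mu''(0)$ strictly negative on $\mathcal{N}$. I would then use a spectral-gap/coercivity argument, based on the finite codimension of $\mathcal{N}^\perp$ inside $T_0\mathcal{H}_0$ and the self-adjoint Fredholm character of the variational operator defining $\mu''(0)$, to obtain $\mu''(0)[\vphi',\vphi']\leq -c\|\vphi'\|_{L^2(\om)}^2$ for some $c>0$. A Taylor expansion along the ray $s\mapsto\om_{s\vphi'}$ then gives $\mu(\om_{\vphi'})\leq\mu(\om)-\tfrac{c}{2}\|\vphi'\|^2+O(\|\vphi'\|^3)\leq\mu(\om)$ for $\|\vphi'\|$ small, which completes the proof. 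The main obstacle is this last coercive/Taylor step: it requires smooth dependence of the $\mu$-minimizer $f$ on the metric and uniform control of the third variation of $\mu$ near the soliton, which one obtains by applying the implicit function theorem to the Euler-Lagrange equation for $f$ whose linearization is invertible on the $L^2$-orthogonal complement of its finite-dimensional kernel at a K\"ahler-Ricci soliton.
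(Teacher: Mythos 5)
Your argument is essentially the paper's: the lemma is deduced directly from Tian--Zhu's second variation formula together with the observation that the null directions of the Hessian at the soliton are exactly the tangent directions to the $Aut_0(M)$-orbit, along which $\mu$ is constant by diffeomorphism invariance. The paper records only this one-line observation and states the lemma without further proof, so your slice-theorem, coercivity and Taylor-expansion steps are a correct elaboration of the passage from ``semi-negative Hessian with kernel equal to orbit directions'' to ``local maximum'' that the paper leaves implicit, rather than a genuinely different route.
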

As a result we deduce that a K\"ahler metric which achieves the maximum value of the $\mu(g)$ functional near a K\"ahler-Ricci soliton must be a K\"ahler-Ricci soliton.

Let $\mathcal{E}_0\subset\mathcal{H}_0$
be the space of potentials of K\"ahler-Einstein solitons
with respect to the holomorphic vector field $X$.
In fact, due to \thmref{tz uni},
$\mathcal{E}_0$ is a single orbit under the action of $Aut_r(M)$.
Moreover, analogously to the extremal metric in Calabi \cite{MR780039}, in the appendix of Tian-Zhu \cite{MR1768112}, their Lemma A.2. and Theorem A shows that the identity component of the holomorphic isometric group of the K\"ahler-Ricci soliton $(\om,X)$ is a maximal compact subgroup of $Aut_r(M)$ containing $K_X$. So $(Aut_r(M),K)$ is a Riemannian symmetric pair and $\mathcal{E}_0$ is $Aut_r(M)$-equivariantly diffeomorphic to the Riemannian symmetric space $Aut_r(M)/K$. Then each geodesic initials from $\om$ in $\mathcal{E}_0$ is written in the form $\g(t)=\exp(t\Re Y)^\ast\om$ for some nonzero $Y$ whose imagine part is a Killing vector field. According to Theorem 3.5 in Mabuchi \cite{MR909015}, we obtain $\g(t)$ is also a geodesic in $\mathcal{K}$.
Thus we obtain:
\begin{lem}$\mathcal{E}_0$ is a finite dimension
totally geodesic submanifold
of $\mathcal{H}_0$.
\end{lem}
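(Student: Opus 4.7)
The plan is to combine the two structural inputs already cited in the paragraph preceding the statement. First, by the Tian--Zhu uniqueness theorem (\thmref{tz uni}), $\mathcal{E}_0$ is a single orbit under the action of $Aut_r(M)$, and the stabilizer at any soliton $\omega$ is, by Lemma A.2 and Theorem A of \cite{MR1768112}, the maximal compact subgroup $K\supseteq K_X$ consisting of holomorphic isometries. Thus $\mathcal{E}_0$ is $Aut_r(M)$-equivariantly diffeomorphic to the coset space $Aut_r(M)/K$, which is a finite-dimensional Riemannian symmetric space. Finite dimensionality of $\mathcal{E}_0$ follows immediately.

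For the totally geodesic property, I would argue that any geodesic of $\mathcal{E}_0$ (with the induced Riemannian symmetric structure) is also a geodesic of $\mathcal{H}_0$ in the Mabuchi metric. By $Aut_r(M)$-homogeneity it is enough to check this at one base point, say $\omega$. Every geodesic of the symmetric space $Aut_r(M)/K$ through the identity coset is of the form $t\mapsto \exp(t\Re Y)^\ast\omega$, with $Y\in\eta_r$ such that $\Im Y$ is Killing; this identifies $T_\omega\mathcal{E}_0$ with an appropriate subspace of potentials $\theta_Y$ (modulo constants, so as to land in $T_\omega\mathcal{H}_0=\{f:\int f\,\omega^n=0\}$). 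Now apply Theorem 3.5 of Mabuchi \cite{MR909015}, which says that one-parameter families obtained by pulling back $\omega$ along real parts of holomorphic vector fields are Mabuchi geodesics in $\mathcal{H}$: hence $\gamma(t)=\exp(t\Re Y)^\ast\omega$ is a geodesic in $\mathcal{H}$.

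It remains to check the normalization, namely that $\gamma(t)$ stays in $\mathcal{H}_0$ (not merely $\mathcal{H}$). Since $I$ only differs by a constant between $\gamma(t)$ and its $I=0$ representative, and since Mabuchi geodesics are preserved by adding time-affine functions of $t$ (which is absorbed by the projection $\mathcal{H}=\mathcal{H}_0\times\mathbb{R}$), we may reparametrize $\gamma$ inside $\mathcal{H}_0$ and still have a geodesic of $\mathcal{H}_0$. Together with homogeneity this shows that every geodesic of $\mathcal{E}_0$ through any of its points is a geodesic of $\mathcal{H}_0$, which is the definition of totally geodesic.

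The main obstacle I anticipate is the careful matching of the two geodesic structures: on the one hand the Riemannian symmetric structure of $Aut_r(M)/K$, and on the other the Mabuchi metric restricted to $\mathcal{E}_0$. One needs to verify that the natural identification $\mathcal{E}_0\simeq Aut_r(M)/K$ is isometric (up to scale) so that ``geodesic in $\mathcal{E}_0$'' unambiguously refers to curves of the form $\exp(t\Re Y)^\ast\omega$; this uses again that $\mathcal{E}_0$ is a single $Aut_r(M)$-orbit and that the Mabuchi inner product at a soliton is $Aut_r(M)$-invariant along the orbit. Once this identification is made, Mabuchi's Theorem 3.5 furnishes the required totally geodesic property directly.
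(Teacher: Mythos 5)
Your proposal is correct and follows essentially the same route as the paper: identifying $\mathcal{E}_0$ with the finite-dimensional symmetric space $Aut_r(M)/K$ via the Tian--Zhu uniqueness theorem and the appendix of \cite{MR1768112}, and then invoking Mabuchi's Theorem 3.5 to see that the orbit geodesics $\exp(t\Re Y)^\ast\om$ are also Mabuchi geodesics. Your extra remarks on the normalization into $\mathcal{H}_0$ and on matching the two geodesic structures only make explicit points the paper leaves implicit.
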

If we choose $\om_\rho=\om+\p\bar\p\rho$
such that $\rho$ realizes the shortest distance between
$\psi$ and $\mathcal{E}_0$.
Clearly, $\rho$ is uniquely determined.
In fact, due to \thmref{tz uni}
we obtain a holomorphic diffeomorphism $\sigma\in Aut_r(M)$
such that
$\sigma^\ast\om=
\om_\rho=\om+\sqrt{-1}\p\bar\p\rho\text{ and }\rho\in\mathcal{E}_0$.
Following the analogous argument of Proposition \ref{hol} by using the $\mu$ functional instead of the $K$-energy, we obtain the following proposition.
\begin{prop}\label{hol sol}
Assume $M$ admits a K\"ahler-Ricci soliton $(\om,X)$.
There exits a
small constant $\eps_0$.
If $|\vphi_0|_{C^{2,\a}(M)}\leq\eps_0$,
then there is a unique solution $\vphi(t)$
and the corresponding holomorphic transformation $\varrho(t)\in Aut_r(M)$
such that normalization potential of $\varrho(t)^\ast\om_\vphi(t)$
always stays in $\mathcal{S}$.
Moreover, for any sequence $g(t_i)$,
there is a subsequence $g(t_{i_j})$ such that
$\varrho(t_{i_j})^\ast g_{\vphi(t_{i_j})}$ converges smoothly to $g_\infty$.
\end{prop}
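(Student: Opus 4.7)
The plan is to mirror the proof of \propref{hol} (which uses the $K$-energy in the K\"ahler--Einstein case) but with the $K$-energy replaced by Perelman's $\mu$ functional, and with $Aut_0(M)$ replaced by $Aut_r(M)$. Since $\mathcal{E}_0$ is now the orbit $Aut_r(M)\cdot\om$, \thmref{tz uni} plays the role previously played by \thmref{BM en}(ii), and \lemref{MRF mini} (K\"ahler-Ricci solitons are local maxima of $\mu$ among K\"ahler metrics in $2\pi C_1(M)$) replaces the statement that K\"ahler--Einstein metrics are absolute minimizers of the $K$-energy. In particular, along the K\"ahler-Ricci flow $\mu(g_{\vphi(t)})$ is monotone \emph{non-decreasing} and is bounded above by $\mu_{\max}:=\mu(\om)$.

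First I would argue by contradiction exactly as in \propref{hol}. Using \thmref{short time: stability} applied to the soliton version of \eqref{KRF potential}, let $T$ be the first time with $|\vphi(T)|_{C^{2,\a}}=\eps_1$ and $|\vphi(t)|_{C^{2,\a}}<\eps_1$ on $[0,T)$, where $\eps_1$ is the constant from the soliton analogue of \lemref{Kf on SoKM: gauge bound}. By \thmref{regularity of PCF: reg of PCF} we have $\vphi(T)\in\mathcal{S}(\eps_1,C(k,\eps_1,\tfrac{T}{2}))$. The finite-dimensionality and totally geodesic property of $\mathcal{E}_0$ (established in the lemma just above the proposition) together with the uniqueness \thmref{tz uni} give a unique projection $\rho\in\mathcal{E}_0$ realizing the distance from $\vphi(T)$ to $\mathcal{E}_0$, and a unique $\sigma\in Aut_r(M)$ with $\sigma^\ast\om=\om+\sqrt{-1}\p\bar\p\rho$, with bounds $|\rho|_{C^{3,\a}}\le C$, $|\sigma|_h\le C$ coming from the soliton analogues of \lemref{Kf on SoKM: dis imp norm} and \lemref{Kf on SoKM: gauge bound}.

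The main analytic step is the soliton analogue of \lemref{PCF on SoKM: norm bound by enery}: if $\vphi\in\mathcal{S}$ satisfies $\mu_{\max}-\mu(\om_\vphi)\le o$, then $|(\sigma^{-1})^\ast(\vphi-\rho)|_{C^{2,\a}}<\tfrac{\eps_1}{2}$. The argument will again be by contradiction and compactness: extract a subsequence $\vphi_s\to\vphi_\infty$ smoothly in $\mathcal{S}$, use the diffeomorphism-invariance of $\mu$ to pass to the limit and show $\mu(\om_{\vphi_\infty})=\mu_{\max}$, conclude by \lemref{MRF mini} that $\om_{\vphi_\infty}$ is a K\"ahler-Ricci soliton, hence $\vphi_\infty\in\mathcal{E}_0$; then the Lipschitz continuity of the distance function \cite{MR1863016} forces $d(\vphi_\infty,\rho_\infty)=0$, contradicting the assumed lower bound on $|(\sigma^{-1})^\ast(\vphi-\rho)|_{C^{2,\a}}$. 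This step will be the main obstacle, because unlike the $K$-energy the $\mu$ functional is defined variationally, so passing $\mu(\om_{\vphi_s})\to\mu(\om_{\vphi_\infty})$ requires control on Perelman's minimizers $f_s$; this is handled by standard elliptic theory on the Euler--Lagrange equation together with the smooth convergence $\vphi_s\to\vphi_\infty$.

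Once this analogue is in place, the iteration proceeds as in \propref{hol}. Since $X$ lies in the centre of $\eta_r$ and $\sigma\in Aut_r(M)$, a direct computation shows $\vphi_1(t)=(\sigma^{-1})^\ast(\vphi(T+t)-\rho)$ is again a solution of the soliton-modified normalized K\"ahler-Ricci flow with initial datum of $C^{2,\a}$-norm $<\tfrac{\eps_1}{2}$; \thmref{short time: stability} extends it on $[0,T_1]$ with $T_1\ge T$, and induction yields holomorphic transformations $\varrho(t)=(\prod_{i<s}\sigma_i)^{-1}$ such that the normalized potential of $\varrho(t)^\ast\om_{\vphi(t)}$ never leaves $\mathcal{S}$. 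The long-time existence of $\vphi(t)$ follows. For any sequence $t_i$, the uniform bounds $|\varrho(t_i)^\ast\om_{\vphi(t_i)}-\om|_{C^{k}}\le C(k,\eps_1,\tfrac{T}{2})$ allow extraction of a subsequence converging smoothly to some $g_\infty$ by Arzel\`a--Ascoli, and $\mu(g_\infty)=\mu_{\max}$ together with \lemref{MRF mini} forces $g_\infty$ to be a K\"ahler-Ricci soliton. Finally, the Lipschitz continuation of $\varrho(t)$ between the discrete times $t_s=\sum_{i\le s}T_i$ is carried out exactly as in the paragraph following \propref{hol}, using $|X_s|\le C$.
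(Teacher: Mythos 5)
Your proposal is correct and follows essentially the same route as the paper, which proves Proposition \ref{hol sol} simply by declaring it "the analogous argument of Proposition \ref{hol} with the $\mu$ functional in place of the $K$-energy," using \thmref{tz uni} for uniqueness and the local maximality of $\mu$ at a soliton. In fact you supply more detail than the paper does — notably the point that passing $\mu(\om_{\vphi_s})\to\mu(\om_{\vphi_\infty})$ requires control of Perelman's minimizers $f_s$, and the observation that $X$ lying in the centre of $\eta_r$ is what makes the pulled-back flow satisfy the same equation — both of which the paper leaves implicit.
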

Let $\varsigma$ be generated by $\Re X$ such that $\Re X=(\varsigma^{-1})_\ast\frac{\p}{\p t}\varsigma$,
$\varsigma^\ast\om=\om_\varrho$
and $\phi=\varsigma^\ast\vphi+\varrho$.
We obtain the modified K\"ahler-Ricci flow of the form
\begin{equation}\label{MKRF sol}
\begin{cases}
\frac{\p}{\p t}\om_\phi
&=-Ric(\om_\phi)+\om_\phi+L_{\Re X}\om_\phi,\\
\om_{\vphi(0)}&=\om_{\vphi_0}.
\end{cases}
\end{equation}
The modified potential equation is
\begin{equation}\label{KRF potential sol}
\begin{cases}
\frac{\p\phi}{\p t}&=\log\frac{\om^n_\phi}{\om^n}
+\phi+\Re X(\phi)+a(t),\\
\phi(0)&=\varphi_0,
\end{cases}
\end{equation}
with the normalization condition
\begin{align}\label{sol a}
a(t)=-\frac{1}{V}\int_M(\log\frac{\om^n_\phi}{\om^n}
+\phi+\Re X(\phi))\om^n_\phi.
\end{align}
Here
\begin{align*}
&L_X\om_\phi=Ric-\om+\p\bar\p(X(\phi))
=\p\bar\p \theta_X(\phi)=\p\bar\p (\theta_X+X(\phi)),\\
&L_{\Re X}\om_\phi=\p\bar\p (\theta_X+\Re X(\phi))\text{ and }
L_{\Im X}\om_\phi=\p\bar\p (\Im X(\phi)).
\end{align*}
When the initial datum is $K_X$-invariant, $\varphi$ and $\phi$ are both $K_X$-invariant.
Choose the $K_X$-invariant K\"ahler potential space be $$M_X(\om)=\{\phi\in C^\infty(M)\vert
\om+\p\bar\p\phi>0, \Im X(\phi)=0\}.$$
In Tian-Zhu \cite{MR1915043},
they introduced the modified Futaki invariant
\begin{align*}
F_X(Y)=\int_MY(f-\theta_X)e^{\theta_X}\om^n
\end{align*} for all $X,Y\in\eta(M)$ and the Futaki potential $f$ determined by $\om$ and the modified $K$-energy
\begin{align*}
\mu(\om,\om_{\phi})
&=-\frac{n}{V}\int_0^1\int_M\dot\phi[Ric(\om_\phi)-\om_\phi
-\p\bar\p\theta_X(\phi)\\
&+\p(h_{\om_\phi}-\theta_X(\phi))\wedge\bar\p\theta_X(\phi)]
e^{\theta_X(\phi)}\om_\phi^{n-1}dt.
\end{align*}
for any $\phi$ in $M_X(\om)$.
Along the modified K\"ahler-Ricci flow \eqref{MKRF sol}, the modified Futaki invariant takes the form
\begin{align}\label{sol fu}
F_X(Y)&
=-\int_MY(\dot\phi)e^{\theta_X(\phi)}\om_\phi^n\nonumber\\
&=-\int_M\theta_Y(\tri+X)\dot\phi e^{\theta_X(\phi)}\om_\phi^n.
\end{align}
It is obvious to obtain the evolution of the modified $K$-energy along the modified K\"ahler-Ricci flow, i.e.
\begin{align}\label{sol K t}
\frac{\p}{\p t}\mu(\om,\om_{\phi})
=-\frac{1}{V}\int_M|\nabla\dot\phi|^2e^{\theta_X(\phi)}\om^n_\phi.
\end{align}
Accordingly, the modified $K$-energy is decreasing
along the modified K\"ahler-Ricci flow on $M_X(\om)$.
\begin{thm}(Tian-Zhu \cite{MR1915043})\label{sol lb}
If M admits a K\"ahler-Ricci soliton $(\om,X)$,
the modified Futaki invariant $F_X(Y)=0$
for all $Y\in\eta(M)$,
and the modified $K$-energy has lower bound on $M_X(\om)$.
\end{thm}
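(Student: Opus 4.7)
The plan is to prove the two assertions of the statement separately.

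\emph{Vanishing of the modified Futaki invariant.} At a K\"ahler-Ricci soliton $(\om, X)$ the equation $L_X\om = Ric(\om) - \om$ reads $\sqrt{-1}\p\bar\p\theta_X = \sqrt{-1}\p\bar\p h_\om$, so $\theta_X - h_\om$ is pluriharmonic and hence constant on the compact manifold $M$. The twin normalizations $\int_M e^{\theta_X}\om^n = V = \int_M e^{h_\om}\om^n$ then force $\theta_X \equiv h_\om$ pointwise. Substituting into the definition $F_X(Y) = \int_M Y(h_\om - \theta_X) e^{\theta_X}\om^n$ immediately gives $F_X(Y) = 0$ at the soliton. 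To promote this to a genuine holomorphic invariant of the class $2\pi C_1(M)$, I would differentiate $F_X(Y)$ along an arbitrary path $\om_t = \om + \sqrt{-1}\p\bar\p\phi_t$, rewrite the variation through integration by parts using $\bar\p Y = 0$ and the holomorphicity of $Y$, and show that all terms cancel, so that $F_X(Y)$ is independent of the representative metric within the class.

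\emph{Lower boundedness of the modified $K$-energy.} The evolution identity \eqref{sol K t} shows $\mu(\om, \om_\phi)$ is monotone non-increasing along the modified K\"ahler-Ricci flow \eqref{MKRF sol}, whose stationary solutions are exactly the K\"ahler-Ricci solitons. The plan, in the spirit of Bando-Mabuchi, is to prove convexity of $\mu$ along weak geodesics inside $M_X(\om)$. The subspace $M_X(\om)$ is itself geodesically convex, since the homogeneous complex Monge-Amp\`ere equation defining the geodesic is $K_X$-equivariant and therefore preserves $K_X$-invariance of the endpoints. Along a $C^{1,1}$ geodesic $\phi_t$ in $M_X(\om)$ joining an arbitrary $K_X$-invariant potential to a K\"ahler-Ricci soliton potential, I would establish $\frac{d^2}{dt^2}\mu(\om, \om_{\phi_t}) \geq 0$. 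Since the soliton is a critical point of $\mu$, convexity then forces the soliton to be a global minimizer on $M_X(\om)$, yielding the desired lower bound.

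\emph{Main obstacle.} The key technical difficulty is the convexity statement along $C^{1,1}$ geodesics, since term-by-term double differentiation of $\mu(\om, \om_{\phi_t})$ is not valid at that regularity. The standard remedy is an $\eps$-regularization of the Monge-Amp\`ere equation $(\om + \sqrt{-1}\p\bar\p\phi_t)^{n+1} = \eps\, \om^{n+1}$: on the smooth approximations $\phi_t^\eps$ a direct calculation gives $\frac{d^2}{dt^2}\mu(\om, \om_{\phi_t^\eps}) \geq -C\eps$, and passing $\eps \to 0$ together with weak continuity of $\mu$ in the $C^{1,1}$ topology recovers convexity along the true geodesic. A secondary subtlety is that the first variation of $\mu$ vanishes at the soliton only along $K_X$-invariant directions, which is exactly why the restriction to $M_X(\om)$, and the imposition of $K_X$-invariance on the initial datum as in \thmref{sta sol}, is essential.
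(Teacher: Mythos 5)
This theorem is quoted from Tian--Zhu with a citation; the paper under review offers no proof of its own, so the only question is whether your sketch would actually close. The first half essentially does: at a soliton $\sqrt{-1}\p\bar\p(\theta_X-h_\om)=0$ forces $\theta_X-h_\om$ to be constant, the two volume normalizations kill the constant, and $F_X(Y)=\int_M Y(h_\om-\theta_X)e^{\theta_X}\om^n$ vanishes pointwise in the integrand. The remaining work is the metric-independence of $F_X$, which is the actual content of Tian--Zhu's computation; you defer it to ``all terms cancel,'' and you should at least note that one must also track how the normalization constant in $\theta_X(\phi)=\theta_X+X(\phi)+c(\phi)$ varies along the path, but this part is routine and your outline is sound.

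The second half has a genuine gap. You reduce the lower bound to convexity of the modified $K$-energy along $C^{1,1}$ geodesics in $M_X(\om)$ and propose to obtain it from $\eps$-geodesics via ``a direct calculation gives $\frac{d^2}{dt^2}\mu(\om,\om_{\phi_t^\eps})\geq -C\eps$.'' That step is exactly where the difficulty lives and it does not follow from a direct calculation: along an $\eps$-geodesic the second variation of the (modified) $K$-energy is the good term $\frac{1}{V}\int_M|\bar D\dot\phi|^2e^{\theta_X(\phi)}\om_\phi^n$ plus an error of the schematic form $\eps\int_M(\cdots)\,\frac{\om^{n+1}}{\om_{\phi_t^\eps}^{n+1}}$, whose integrand involves second derivatives and curvature of $\om_{\phi_t^\eps}$ that are \emph{not} uniformly controlled as $\eps\to 0$ when only $C^{1,1}$ bounds on the regularized geodesic are available. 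This is precisely why convexity of the $K$-energy along $C^{1,1}$ geodesics remained open for roughly a decade after Chen's construction of such geodesics and was settled only by the quite different methods of Berndtsson and of Chen--Li--P\u{a}un; the modified (weighted by $e^{\theta_X(\phi)}$) case requires the analogous machinery and is not simpler. If you want a proof at the level of the cited paper, the route actually used is a Bando--Mabuchi-type continuity method: deform through the family $\om_{\phi_t}^n=e^{h_\om-\theta_X-X(\phi_t)-t\phi_t}\om^n$, $t\in[0,1]$, prove openness by inverting the weighted linearized operator $\tri+X+t$ on $K_X$-invariant functions (this is where $X$ lying in the center of $\eta_r$ and the restriction to $M_X(\om)$ enter), and show the modified $K$-energy is monotone along the deformation, which bounds it below by its value at the soliton. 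Either import the Berndtsson-type convexity theorem explicitly as an input, or switch to the continuity argument; as written, the passage $\eps\to 0$ in your plan does not go through.
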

By proposition \ref{hol sol}, we have $\om_\psi(t)=\varrho^\ast\om_\vphi(t)$ converges to a K\"ahler-Ricci soliton,
$$\lim_{t\rightarrow\infty}Ric(g_{\psi_{t}})
-\om_{\psi_{t}}-L_X\om_{\psi_{t}}=0.$$
Moreover, since $X$ stays in the center of $\eta_r(M)$, we have the holomorphic transformation
$\varsigma^\ast\varrho^{-1\ast}$
keeps the identity invariant,
\begin{align}\label{sol ricci}
\lim_{t\rightarrow\infty}Ric(g_{\phi_{t}})
-\om_{\phi_{t}}-L_X\om_{\phi_{t}}=0.
\end{align}
After taking trace, we obtain
\begin{align}\label{sol scalar}
\lim_{t\rightarrow\infty}\tri_{\phi_t}\dot\phi
=\lim_{t\rightarrow\infty}S(g_{\phi_t})-n-\tri_{\phi_t}\theta_X(\phi)=0.
\end{align}
Again since $X$ stays in the center of $\eta_r(M)$, we also get
\begin{align*}
\lim_{t\rightarrow\infty}L_X(\sqrt{-1}\p\bar\p\dot\phi)
=\lim_{t\rightarrow\infty}L_X(Ric(g_{\phi_{t}})
-\om_{\phi_{t}}-L_X\om_{\phi_{t}})=0.
\end{align*}
For each time $t$, similarity to \lemref{dot vphi} we apply the De Giorgi iteration
to deduce the $L^\infty$ bound of $\dot\phi$ and $X(\dot\phi)$.
In fact, we obtain
\begin{align}\label{sol dvphi 1,2}
||\dot\phi||_{W^{1,2}}
\leq C||S-n-\tri\theta_X(\phi)||_\infty.
\end{align}Then we derive
\begin{align}\label{sol dot vphi}
||\dot\phi||_\infty\leq C||S-n-\tri\theta_X(\phi)||_\infty
\end{align}
and the $L^\infty$ bound of $X(\dot\phi)-\frac{1}{V}\int_MX(\dot\phi)\om^n_\phi$
\begin{align}\label{sol X dot vphi a}
||X(\dot\phi)-\frac{1}{V}\int_MX(\dot\phi)\om^n_\phi||_\infty\leq C||tr_{g_\phi}L_X(Ric(g_{\phi_{t}})
-\om_{\phi_{t}}-L_X\om_{\phi_{t}})||_\infty.
\end{align}
Moreover, Zhu \cite{MR1817785} provides the following estimate.
 \begin{thm}(Zhu \cite{MR1817785})
 For any $\phi\in M_X(\om)$ there is a constant $C$ depends on $g$ and $X$ such that
\begin{align}\label{sol xdp}
||X(\phi)||_\infty\leq C.
\end{align}
\end{thm}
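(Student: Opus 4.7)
The plan is to realize $u := X(\phi)$, up to an additive constant, as the normalized holomorphy potential of $X$ with respect to the twisted K\"ahler form $\om_\phi$, and then bound that potential by a moment-map / cohomological argument that uses only fixed data from $g$ and $X$. Since $\phi \in M_X(\om)$ has $\Im X(\phi)=0$, the function $u=X(\phi)=\Re X(\phi)$ is real-valued and $K_X$-invariant. Because $L_X$ commutes with $\sqrt{-1}\p\bar\p$ and $L_X\om=\sqrt{-1}\p\bar\p\theta_X$, we obtain $L_X\om_\phi=\sqrt{-1}\p\bar\p(\theta_X+u)$. Define the normalized holomorphy potential by
$$\theta_X^\phi := \theta_X + u + c(\phi),\qquad \int_M e^{\theta_X^\phi}\om_\phi^n = V,$$
so that the desired bound on $\|u\|_\infty$ reduces to uniform bounds on $\|\theta_X^\phi\|_\infty$ and $|c(\phi)|$.

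The central step is a uniform bound on $\theta_X^\phi$. The moment-map identity $d\theta_X^\phi=-\iota_{\Im X}\om_\phi$ forces every critical point of $\theta_X^\phi$ to lie in the fixed-point set $F:=\{\Im X=0\}=\{X=0\}$, a compact analytic subset of $M$ depending only on $X$. In particular both $\max_M\theta_X^\phi$ and $\min_M\theta_X^\phi$ are attained on $F$, and $\theta_X^\phi$ is constant on each connected component $F_\alpha\subset F$. To show the oscillation is cohomologically determined, I would write $\om_\phi-\om=d\beta$ with $\beta=\tfrac{\sqrt{-1}}{2}(\p-\bar\p)\phi$; this 1-form is $K_X$-invariant thanks to $\Im X(\phi)=0$ and the $J$-invariance of $\Im X$. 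Cartan's formula together with $L_{\Im X}\beta=0$ then yields $d(\theta_X^\phi-\theta_X)=d\iota_{\Im X}\beta$, and restricting to $F$ where $\iota_{\Im X}\beta=0$ shows that $\theta_X^\phi(F_\alpha)-\theta_X(F_\alpha)$ is independent of $\alpha$. Consequently
$$\max_M\theta_X^\phi - \min_M\theta_X^\phi \;=\; \max_F\theta_X - \min_F\theta_X \;=:\; D(g,X),$$
which depends only on $g$ and $X$. The normalization combined with $V=\int_M e^{\theta_X^\phi}\om_\phi^n\leq e^{\max\theta_X^\phi}V$ and the Jensen inequality gives $\min\theta_X^\phi\leq 0\leq\max\theta_X^\phi$, so $\|\theta_X^\phi\|_\infty\leq D(g,X)$.

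Finally, to bound $c(\phi)$ I would evaluate at a fixed point of $K_X$: since $M$ is Fano we have $\chi(M)>0$, so the compact group $K_X$ acting on $M$ must have fixed points, and $F$ is nonempty. At any $p_0\in F$, $X(p_0)=0$ and therefore $u(p_0)=0$, whence
$$c(\phi) = \theta_X^\phi(p_0)-\theta_X(p_0),$$
which is uniformly bounded by the previous step and $\|\theta_X\|_\infty\leq C$. Combining, $\|X(\phi)\|_\infty\leq\|\theta_X^\phi\|_\infty+\|\theta_X\|_\infty+|c(\phi)|\leq C(g,X)$. The main obstacle lies in the middle step: verifying the cohomological invariance of the oscillation $\max\theta_X^\phi-\min\theta_X^\phi$. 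The $K_X$-invariance of $\phi$ enters crucially through the identity $L_{\Im X}\beta=0$ — without it, the difference $d(\theta_X^\phi-\theta_X)-d\iota_{\Im X}\beta$ need not vanish on $F$, the extremal values could drift with $\phi$, and the uniform $L^\infty$ bound would fail. This is why the statement is formulated on the restricted space $M_X(\om)$ rather than all of $\mathcal{H}$.
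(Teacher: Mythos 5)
The paper gives no proof of this bound---it is quoted verbatim from Zhu \cite{MR1817785}---so the comparison must be with the standard argument from that source. Your proposal is correct in substance, and its engine is exactly Zhu's: since $\iota_X\om_\phi=\sqrt{-1}\bar\p\bigl(\theta_X+X(\phi)\bigr)$ and the hypothesis $\phi\in M_X(\om)$ makes $\theta:=\theta_X+X(\phi)$ real-valued, every extremum of $\theta$ is a critical point, hence a zero of $\Im X$, hence a zero of the holomorphic field $X$, hence a point where $X(\phi)=0$. But from that single observation the conclusion is immediate, and everything you build around it is superfluous: if $p$ and $q$ are maximum and minimum points of $\theta$, then $\max_M\theta=\theta_X(p)\le\max_M\theta_X$ and $\min_M\theta=\theta_X(q)\ge\min_M\theta_X$, so $\min_M\theta_X\le\theta\le\max_M\theta_X$ everywhere and $\|X(\phi)\|_\infty=\|\theta-\theta_X\|_\infty\le\osc_M\theta_X$, a constant depending only on $g$ and $X$. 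In particular the normalization constant $c(\phi)$, the Jensen inequality, and the entire Cartan-formula computation are not needed. The last of these is in fact circular: on $F=\{X=0\}$ one has $X(\phi)=0$ directly, so the statement that $\theta_X^\phi-\theta_X$ takes the same value on every component of $F$ is a tautology (that value is $c(\phi)$ by definition) and does not require $L_{\Im X}\beta=0$.

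Two further points. First, the claim ``$M$ Fano implies $\chi(M)>0$'' is false: a Fano threefold has $\chi=2+2h^{1,1}-2h^{1,2}$, which is negative whenever $h^{1,2}>h^{1,1}+1$ (e.g.\ the sextic double solid). The error is harmless only because $F\neq\emptyset$ already follows from your own critical-point step---$\theta_X$ attains its maximum on the compact $M$, and that maximum point lies in $F$---so you should derive nonemptiness that way rather than from the Euler characteristic. Second, your closing remark misplaces where the hypothesis $\phi\in M_X(\om)$ is used: its essential role is to make $\theta_X+X(\phi)$ real-valued, so that ``maximum point'' makes sense and the maximum-principle step applies; the $K_X$-invariance of the primitive $\beta$ is never actually needed.
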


\begin{lem}The relation between $a(t)$ and the modified $K$-energy is
\begin{align}\label{sol a K}
|a(t)-a(0)|\leq\mu(\om,\om_{\phi_0})-\mu(\om,\om_{\phi_t}).
\end{align}
In addition,
\begin{align}\label{sol a'}
|a'(t)|\leq C\frac{1}{V}
\int_M|\nabla\dot\phi|^2e^{\theta_X(\phi)}\om^n_\phi\leq C||S-n-\tri\theta_X(\phi)||_\infty.
\end{align}
\end{lem}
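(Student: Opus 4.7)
The plan is to mirror the K\"ahler-Einstein derivation of the identity \eqref{a} by differentiating the normalization constant \eqref{sol a} directly, then using Stokes' theorem, the preserved normalization $\frac{d}{dt}I(\phi)=0$, and the flow equation \eqref{KRF potential sol} to reduce the result to an integral that can be compared with the right-hand side of the evolution formula \eqref{sol K t} for the modified $K$-energy. The inequality \eqref{sol a K} will then follow by integrating the pointwise bound on $|a'(t)|$ over $[0,t]$.

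First I will differentiate
\[
Va(t)=-\int_M\Bl\log\tfrac{\om^n_\phi}{\om^n}+\phi+\Re X(\phi)\Br\om^n_\phi
\]
term by term. The $\int_M\tri_\phi\dot\phi\,\om^n_\phi$ piece produced when differentiating the volume form of the $\log$-summand vanishes by Stokes; the $\int_M\dot\phi\,\om^n_\phi$ piece from the derivative of the $\phi$-summand vanishes by the preserved normalization; and substituting $\log\tfrac{\om^n_\phi}{\om^n}+\phi+\Re X(\phi)=\dot\phi-a(t)$ from the flow equation collapses the remaining $F\cdot\tri_\phi\dot\phi$ collection into $\int_M\dot\phi\,\tri_\phi\dot\phi\,\om^n_\phi=-\int_M|\nabla\dot\phi|^2\om^n_\phi$. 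Collecting the contribution $\Re X(\dot\phi)$ coming from the derivative of the last summand, I arrive at
\[
Va'(t)=\int_M|\nabla\dot\phi|^2\om^n_\phi-\int_M\Re X(\dot\phi)\,\om^n_\phi.
\]

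Second I will bound the cross term. Since $L_X\om_\phi=\sqrt{-1}\p\bar\p\theta_X(\phi)$ yields $L_X\om^n_\phi=\tri_\phi\theta_X(\phi)\,\om^n_\phi$, while $\int_M L_X\alpha=0$ for any top form, integration by parts gives $\int_M\Re X(\dot\phi)\,\om^n_\phi=-\int_M\dot\phi\,\tri_\phi\theta_X(\phi)\,\om^n_\phi$. The vanishing of the modified Futaki invariant from Theorem~\ref{sol lb} and \eqref{sol fu}, namely $\int_M X(\dot\phi)e^{\theta_X(\phi)}\om^n_\phi=0$, allows me to rewrite the cross term with weight $(1-e^{\theta_X(\phi)})$. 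Combined with the uniform bound $\|\theta_X(\phi)\|_{C^1}\leq C$ following from \eqref{sol xdp} and the flow regularity, the Poincar\'e inequality $\|\dot\phi\|_{L^2}\leq C\|\nabla\dot\phi\|_{L^2}$ (available because $\int_M\dot\phi\,\om^n_\phi=0$), and the two-sided bound on $e^{\theta_X(\phi)}$, this will yield
\[
|a'(t)|\leq C\,\frac{1}{V}\int_M|\nabla\dot\phi|^2 e^{\theta_X(\phi)}\om^n_\phi.
\]
The last inequality of \eqref{sol a'} then follows from \eqref{sol dvphi 1,2} together with the uniform bound on $\|S-n-\tri\theta_X(\phi)\|_\infty$ provided by Proposition~\ref{hol sol}, and \eqref{sol a K} is obtained by integrating this pointwise estimate over $[0,t]$ and invoking \eqref{sol K t}.

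The main difficulty is obtaining a \emph{quadratic}-gradient bound on the cross term $\int_M\Re X(\dot\phi)\,\om^n_\phi$: a naive Cauchy--Schwarz only controls it by $C\|\nabla\dot\phi\|_{L^2}$ rather than $C\|\nabla\dot\phi\|^2_{L^2}$, and it is the Futaki-invariant cancellation together with the Poincar\'e inequality applied to $\dot\phi$ (which has zero mean against $\om^n_\phi$) that makes the linear-in-$\|\nabla\dot\phi\|_{L^2}^2$ bound possible. The subsequent conversion between $\om^n_\phi$ and $e^{\theta_X(\phi)}\om^n_\phi$, and the absorption of the extra power of $\|S-n-\tri\theta_X(\phi)\|_\infty$, are routine given the a priori estimates already assembled in the preceding sections.
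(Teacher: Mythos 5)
Your computation of $a'(t)$ is exactly the paper's: differentiating \eqref{sol a}, killing the $\int_M\tri_\phi\dot\phi\,\om^n_\phi$ and $\int_M\dot\phi\,\om^n_\phi$ terms by Stokes and the normalization, and substituting the flow equation gives $Va'(t)=\int_M|\nabla\dot\phi|^2\om^n_\phi-\int_M X(\dot\phi)\,\om^n_\phi$, which is \eqref{sol a' e}; the concluding integration against \eqref{sol K t} is also the same. So the route is the paper's route, and those parts are correct.

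The problem is the crux estimate $|\frac{1}{V}\int_M X(\dot\phi)\om^n_\phi|\leq C\frac{1}{V}\int_M|\nabla\dot\phi|^2\om^n_\phi$ (the paper's \eqref{sol X dot vphi av}, which it dismisses with ``it is obvious''). You correctly identify that a bound \emph{quadratic} in $\Vert\nabla\dot\phi\Vert_{L^2}$ is needed, but the mechanism you propose does not produce it. Using $F_X(X)=0$ to write $\int_M X(\dot\phi)\om^n_\phi=\int_M X(\dot\phi)(1-e^{\theta_X(\phi)})\om^n_\phi$ and integrating by parts yields $\int_M\dot\phi\,H\,\om^n_\phi$ with $H=|\nabla\theta_X(\phi)|^2e^{\theta_X(\phi)}-(1-e^{\theta_X(\phi)})\tri_\phi\theta_X(\phi)$, a function that is merely bounded: it is not small and does not tend to a constant as $t\to\infty$. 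The zero-mean condition on $\dot\phi$ and the Poincar\'e inequality then give only $|\int_M\dot\phi\,H\,\om^n_\phi|\leq\Vert H-\tfrac{1}{V}\textstyle\int_M H\om^n_\phi\Vert_{L^2}\,\Vert\dot\phi\Vert_{L^2}\leq C\Vert\nabla\dot\phi\Vert_{L^2}$ --- one power of the gradient norm, not two. The weight $1-e^{\theta_X(\phi)}$ is $O(1)$, so the Futaki cancellation removes nothing quantitative; to finish along your lines you would need $\Vert H-\tfrac{1}{V}\int_M H\om^n_\phi\Vert_{L^2}\leq C\Vert\nabla\dot\phi\Vert_{L^2}$, which fails because the left side has a nonzero limit at the soliton while the right side tends to zero. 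In fairness, the paper offers no argument for \eqref{sol X dot vphi av} either, so you have located exactly the step that needs a genuinely new input (or a reformulation of the middle term of \eqref{sol a'}); but as written your proposal does not close it. The remaining steps --- deducing the last inequality of \eqref{sol a'} from \eqref{sol xdp} and \eqref{sol dvphi 1,2}, and integrating to get \eqref{sol a K} --- are fine once the middle bound is granted.
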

\begin{proof}
We compute \eqref{sol a} to see that
\begin{align}\label{sol a' e}
a'(t)
&=-\frac{1}{V}\int_M[X(\dot\phi)
-|\nabla\dot\phi|^2])\om^n_\phi.
\end{align} It is obvious that
\begin{align}\label{sol X dot vphi av}
|\frac{1}{V}\int_MX(\dot\phi)\om^n_\phi|\leq C\frac{1}{V}
\int_M|\nabla\dot\phi|^2\om^n_\phi.
 \end{align} Since $\theta_X(\phi)=\theta_X+X(\phi)$, then \eqref{sol a'} follows from \eqref{sol xdp} and \eqref{sol dvphi 1,2}. Hence, integrating with $t$ in both sides of \eqref{sol a' e} and using \eqref{sol K t} we have \eqref{sol a K}.
\end{proof}
So \thmref{sol lb} implies $a(t)$ is uniformly bounded.
Moreover, the $L^\infty$ bound of $X(\dot\phi)$ is obtained by substituting \eqref{sol X dot vphi av} and \eqref{sol dvphi 1,2} in \eqref{sol X dot vphi a},
\begin{align}\label{sol X dot vphi}
||X(\dot\phi)||_\infty\leq C||tr_{g_\phi}L_X(Ric(g_{\phi_{t}})
-\om_{\phi_{t}}-L_X\om_{\phi_{t}})||_\infty+C||S-n-\tri\theta_X(\phi)||_\infty.
\end{align}
It is direct to compute the $t$ derivative of
$v_0(t)=\frac{1}{V}\int_M\dot\phi^2
e^{\theta_X(\phi)-\dot\phi}\om_\phi^n$,
we arrive at
\begin{align*}
\frac{\p}{\p t}v_0(t)
=\frac{1}{V}\int_M[-2(1-\dot\phi)|\nabla\dot\phi|^2
+\dot\phi^2(2-\dot\phi-a')
+2\dot\phi a']e^{\theta_X(\phi)-\dot\phi}\om^n_\phi.
\end{align*}
Hence we obtain for $\delta(t)=||\dot\phi||_\infty$,
\begin{align*}
\frac{\p}{\p t}v_0(t)
&\leq\frac{1}{V}\int_M[-2(1-\delta)e^{-\delta}|\nabla\dot\phi|^2
+\dot\phi^2(2+\delta-a')e^\delta]e^{\theta_X(\phi)}\om^n_\phi\\
&+\frac{1}{V}2a'e^\delta\int_M|\dot\phi|^2 e^{\theta_X(\phi)}\om^n_\phi.
\end{align*}
In which $\delta(t)\rightarrow0$ and $a'(t)\rightarrow0$ follow from \eqref{sol dot vphi} and \eqref{sol a'} respectively.

As it is known in Futaki \cite{MR947341} and Tian-Zhu \cite{MR2221147}, the operator $\tri_g+ X$ for a K\"ahler-Ricci soliton $(g,X)$ is a self-adjoin elliptic operator in the space $C^{\infty}(M,\mathbb{C})$ equipped with the weighted inner product $(f,g)=\int_Mf\bar g e^{\theta_X(g)}\om^n$. Also the first eigenvalue of $\tri_g+X$ is $1$ and moreover the corresponding eigenspace consists the holomorphic potentials of the holomorphic vector fields in $\eta(M)$.
We apply the same method to the K\"ahler-Einstein metric case to use the fact that the spectrum of $g_{\phi}$ converges to
the the spectrum of the K\"ahler-Ricci soliton
and the modified Futaki invariant \eqref{sol fu} vanishes
to obtain
\begin{align}\label{sol 0 exponential}
u_0(t)=\frac{1}{V}\int_M\dot\phi^2\om_\phi^n
\leq u_0(0)e^{-\theta t}.
\end{align}
Moreover we show that
\begin{lem}
\begin{align}\label{sol l exponential}
u_l(t)=\frac{1}{V}\int_M|\nabla^l\dot\phi|^2\om_\phi^n
\leq Cu_l(0)e^{-\theta t}.
\end{align}
\end{lem}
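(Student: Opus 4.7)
The plan is to adapt Chen–Tian's argument \cite{MR1893004} (already used in the excerpt for the case $l=0$, giving \eqref{sol 0 exponential}) to higher $l$ in the K\"ahler--Ricci soliton setting. Differentiating the modified flow equation \eqref{KRF potential sol} in $t$ gives the linear parabolic equation
\begin{align*}
\ddot\phi = \Delta_\phi \dot\phi + \dot\phi + \Re X(\dot\phi) + a'(t),
\end{align*}
and I would use this together with the evolution $\frac{\p}{\p t}\om_\phi^n = (\Delta_\phi \dot\phi)\om_\phi^n$ as the starting point. Differentiating $u_l(t)$ under the integral sign and commuting $\nabla^l$ past $\p_t$ (the commutator producing curvature and lower-order terms) followed by integration by parts should yield an identity of the schematic form
\begin{align*}
\frac{d}{dt} u_l(t) = -2\, u_{l+1}(t) + 2\, u_l(t) + E_l(t),
\end{align*}
where $E_l(t)$ gathers error terms involving $\Re X$, commutators $[\nabla^l, \Delta_\phi]$, and derivatives of $\dot\phi$ of order $\le l$, weighted by curvature quantities that are uniformly bounded by \thmref{regularity of PCF: reg of PCF} applied to the pulled-back flow.

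The main analytic input is the spectral gap of the weighted operator $\tri_g + X$ at the K\"ahler--Ricci soliton: as recalled in the excerpt following \cite{MR947341, MR2221147}, this operator is self-adjoint with respect to $e^{\theta_X}\om^n$, with first eigenvalue $1$ whose eigenspace consists exactly of holomorphic potentials. Because the modified Futaki invariant vanishes by \thmref{sol lb} and $\dot\phi$ is (asymptotically) orthogonal to the holomorphic potentials along the flow, one has a strict spectral gap $\lambda_2 > 1$ for the remaining part. Since Proposition \ref{hol sol} implies that the spectrum of $g_{\phi(t)}$ (modulo $\varsigma^\ast\varrho^{-1\ast}$, which preserves it) converges to that of the soliton, we obtain $u_{l+1}(t) \ge (1+\theta)\, u_l(t)$ for all $t$ sufficiently large, for some $\theta > 0$ independent of $l$.

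Combining the two ingredients and absorbing the lower-order term $2u_l$, the estimate $\frac{d}{dt} u_l(t) \le -2\theta\, u_l(t) + E_l(t)$ holds for $t$ large. To close the argument I would proceed by induction on $l$: the base case $l=0$ is \eqref{sol 0 exponential}; assuming $u_j \le C_j e^{-\theta t}$ for all $j < l$, the error $E_l(t)$ is controlled by $u_{l-1}(t)$ together with the $L^\infty$ bounds on $\dot\phi$, $X(\dot\phi)$ and $a'(t)$ established in \eqref{sol dot vphi}, \eqref{sol X dot vphi}, \eqref{sol a'}, all of which decay. Gronwall's inequality then yields $u_l(t) \le C u_l(0) e^{-\theta t}$.

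The main obstacle is tracking the error term $E_l(t)$ rigorously. The commutators $[\nabla^l, \Delta_\phi]$ produce curvature factors that depend on derivatives of $g_\phi$ up to order $l$; these are uniformly bounded but not themselves exponentially small. One must therefore argue carefully that after integration by parts these contributions can either be absorbed into the leading term $-2u_{l+1}$ (using Cauchy--Schwarz and the Poincar\'e-type inequality coming from the spectral gap) or bounded by $C u_{l-1}(t)$, which decays by the inductive hypothesis. A secondary technical point is justifying the convergence of spectra uniformly in time, which follows from the smooth convergence of $\varrho(t)^\ast g_\phi(t)$ established in Proposition \ref{hol sol}.
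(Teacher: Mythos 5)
Your setup (differentiating $u_l$, extracting the good term $-2u_{l+1}$ from $\ddot\phi=\tri\dot\phi+\dot\phi+X(\dot\phi)$, and bounding the commutator/volume-form contributions using \eqref{sol ricci} and \eqref{sol scalar}) matches the paper's computation. But the step you lean on to close the argument --- the ``reverse Poincar\'e'' inequality $u_{l+1}(t)\geq(1+\theta)\,u_l(t)$ for \emph{all} $l$, with $\theta$ independent of $l$, deduced from the spectral gap of $\tri_g+X$ --- is a genuine gap. The spectral gap of the weighted operator controls only the ratio $u_1/u_0$ for functions orthogonal to constants and to holomorphic potentials (this, together with the vanishing of the modified Futaki invariant, is exactly what gives the base case \eqref{sol 0 exponential}). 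It says nothing about $\int_M|\nabla^{l+1}\dot\phi|^2$ versus $\int_M|\nabla^{l}\dot\phi|^2$ for $l\geq 1$: relating these requires Bochner-type identities that introduce curvature terms, and even in the model computation (expanding $\dot\phi$ in eigenfunctions on a metric close to the soliton) one only gets $u_2\gtrsim(\lambda_{\min}-1)\,u_1$, which need not exceed $u_1$ and certainly is not $\geq(1+\theta)u_1$ with a uniform $\theta$. So the differential inequality $\frac{d}{dt}u_l\leq-2\theta u_l+E_l$ that feeds your Gronwall/induction scheme is not established.

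The paper avoids this entirely by using the interpolation inequality in the \emph{opposite} direction: $u_l\leq\delta u_{l+1}+C(\delta)u_0$. Substituting this into $\frac{\p}{\p t}u_l\leq(-2+\eps)u_{l+1}+(C(\eps)+2)u_l$ lets the positive multiple of $u_{l+1}$ be absorbed by the term $(-2+\eps)u_{l+1}$ (choosing $\eps,\delta$ with $-2+\eps+(C(\eps)+2)\delta<0$), leaving only $\frac{\p}{\p t}u_l\leq C\,u_0(t)\leq Ce^{-\theta t}$ by \eqref{sol 0 exponential}; no induction on $l$ and no higher-order spectral information is needed. The conclusion is then obtained by integrating this from $t$ to $\infty$, using that $u_l(t)\to 0$ (because $u_l$ is invariant under the holomorphic reparametrization $\varsigma^\ast\varrho^{-1\ast}$ and the pulled-back flow converges smoothly by Proposition \ref{hol sol}), rather than by Gronwall. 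If you replace your spectral-gap step with this interpolation-and-absorption argument, the rest of your outline (identification of the leading term and control of the error terms by \eqref{sol ricci}, \eqref{sol scalar}) goes through.
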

\begin{proof}
We compute
\begin{align*}
\frac{\p u_l(t)}{\p t}&=2\frac{1}{V}\int_M(\nabla_l\dot\phi,\nabla_l\ddot\phi)
+\frac{1}{V}\int_M|\nabla^l\dot\phi|^2\tri\dot\phi\om_\phi^n\\
&-\frac{1}{V}\int_Mg_\phi^{i_1j_1}\cdots g_\phi^{i_{p-1}i_{p+1}} \dot\phi^{i_pj_q}g_\phi^{i_{p+1}j_{q+1}}\cdots g_\phi^{i_lj_l}\nabla_{i_1\cdots i_l}\dot\phi\nabla_{j_1\cdots j_l}\dot\phi\om_\phi^n.
\end{align*}
Since \eqref{KRF potential sol} gives $\ddot\phi=\tri\dot\phi+\dot\phi+X(\dot\phi)$, we use the H\"older inequality to estimate the first term
\begin{align*}
-2u_{l+1}(t)+\eps u_{l+1}(t)+C(\eps)u_{l}(t).
\end{align*}
By \eqref{sol scalar}, the second term is bounded by $u_{l}(t)$.
Using \eqref{KRF potential sol} in the third term we get
\begin{align*}
-\frac{1}{V}\int_Mg_\phi^{i_1j_1}\cdots g_\phi^{i_{p-1}i_{p+1}} (-Ric^{i_pj_q}+g_\phi^{i_pj_q}+X(\phi)^{i_pj_q})g_\phi^{i_{p+1}j_{q+1}} \cdots g_\phi^{i_lj_l}\nabla_{i_1\cdots i_l}\dot\phi\nabla_{j_1\cdots j_l}\dot\phi\om_\phi^n
\end{align*} and by \eqref{sol ricci} we have the third term is controlled by $u_{l}(t)$.
Combining theses three estimates, we then obtain by the $L^p$ interpolation inequality
\begin{align*}
\frac{\p u_l(t)}{\p t}&\leq(-2+\eps)u_{l+1}(t)+(C(\eps)+2)u_{l}(t)\\
&\leq(-2+\eps)u_{l+1}(t)+(C(\eps)+2)(\delta u_{l+1}(t)+C(\delta)u_{0}(t))\\
&\leq(C(\eps)+2)C(\delta)u_{0}(t)
\end{align*} provided $-2+\eps+(C(\eps)+2)\delta<0$.
We thus obtain by \eqref{sol 0 exponential}
\begin{align}\label{pt ul}
\frac{\p u_l(t)}{\p t}\leq Ce^{-\theta t}.
\end{align}
Since $u_l(\psi(t))=\frac{1}{V}\int_M|\nabla^l\dot\psi|^2\om_\psi^n\rightarrow0$ and $u_l(\psi(t))$ is invariant under the holomorphic transformation $\varsigma^\ast\varrho^{-1\ast}$, the lemma follows by integrating \eqref{pt ul} from $t$ to $\infty$.
\end{proof}
Since the Sobolev constant and the Poincare constant are uniformly bounded.
Accordingly, \eqref{sol l exponential} implies by the Sobolev imbedding theorem
\begin{align}\label{sol potential l exponential}
|\dot\phi|_{C^l(g_\phi)}\leq Ce^{-\theta t}.
\end{align}
We apply $\phi(t)=\phi(0)+\int_0^1\dot\phi dt$ to obtain
\begin{align}\label{sol c0}
|\phi(t)|_{C^0}\leq|\phi(0)|_{C^0}+Ce^{-\theta t}.
\end{align}
Consider the equation
$$\log\frac{\om^n_\phi}{\om^n}
=\dot\phi-\phi-X(\phi)-a(t).$$
Yau's computation in \cite{MR480350} and its parabolic adaption in Cao \cite{MR799272} shows:
\begin{align*}
&(\tri_\phi-\frac{\p}{\p t})(e^{-C\phi}(n+\tri\phi))
\geq
e^{-C\phi}\{C(\dot\phi-n)(n+\tri\phi)\\
&+(C+\inf_{i\neq{k}}R_{i\bar{i}k\bar{k}})
(n+\tri\phi)^{\frac{n}{n-1}}e^{\frac{-(\dot\phi-\phi- X(\phi)-a(t))}{n-1}}
+\tri(-\phi- X(\phi))-n^2\inf_{i\neq{k}}R_{i\bar{i}k\bar{k}}\}.
\end{align*}
As shown in Tian-Zhu \cite{MR1768112}, at the maximal point of $e^{-C\vphi}(n+\tri\vphi)$ where we get $\tri \phi_i=C(n+\tri\phi)\phi_i$, it follows
$\tri X(\phi)=X_{ki}\phi_{k\bar i}+CX(\phi)(n+\tri\phi)\leq C(n+\tri\phi)$. As a result, we apply \eqref{sol xdp}, \eqref{sol dot vphi}, \eqref{sol a K} and \eqref{sol c0} to obtain
$$0<n+\tri\phi\leq C,$$
which implies $g$ and $g_\phi$ is $L^\infty$ equivalent.
Then by using Calabi's computation \cite{MR0106487} (see Yau \cite{MR480350}) and the method for dealing with the extra term $\tri X(\phi)$ in Zhu \cite{MR1817785}
we have $C^{3}$ norm of $\phi$ has uniform bound. Moreover, \thmref{regularity of PCF: reg of PCF} implies all higher order derivatives are uniformly bounded and $g$ and $g_\phi$ is $C^\infty$ equivalent.
So \eqref{sol potential l exponential} gives
$$|\phi-\phi_\infty|_{C^{l}(g_\infty)}\leq C_le^{-\theta t}.$$
Finally, we have the exponential convergence of the modified K\"ahler-Ricci flow.
\begin{prop}\label{sol exp}
If the K\"ahler-Ricci flow converges to a K\"ahler-Ricci soliton in Cheeger-Gromov sense. Assume the initial K\"ahler potential is $K_X$-invariant, then the modified K\"ahler-Ricci flow must converge exponentially to
a unique K\"ahler-Ricci soliton nearby.
\end{prop}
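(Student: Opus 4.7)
The plan is to parallel the argument for the K\"ahler-Einstein case (\thmref{sta}), substituting the $K$-energy by the modified $K$-energy $\mu(\om,\om_\phi)$, the Laplacian by the weighted operator $\tri_g+X$, and the Futaki invariant by the modified Futaki invariant $F_X$. The starting point is \propref{hol sol}, which by composing the holomorphic transformations $\varrho(t)$ with the one-parameter group $\varsigma(t)$ generated by $\Re X$ translates Cheeger-Gromov convergence of $\om_\vphi$ into the identity \eqref{sol ricci} for the modified flow $\phi$. Since $X$ lies in the center of $\eta_r(M)$, the operators $L_X$ and $\varsigma^*\varrho^{-1*}$ commute, so that $\tri_{\phi_t}\dot\phi\to 0$ and $L_X(\sqrt{-1}\p\bar\p\dot\phi)\to 0$.

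First I would carry out a De Giorgi-Nash-Moser iteration, analogous to \lemref{dot vphi}, to deduce the $L^\infty$ bounds \eqref{sol dot vphi} and \eqref{sol X dot vphi} for $\dot\phi$ and $X(\dot\phi)$; Zhu's bound \eqref{sol xdp} for $X(\phi)$ and \thmref{sol lb} (the lower bound of the modified $K$-energy) then give the uniform control of $a(t)$ and its derivative via \eqref{sol a K} and \eqref{sol a'}. The core of the argument is then to differentiate $v_0(t)=\frac{1}{V}\int_M \dot\phi^2 e^{\theta_X(\phi)-\dot\phi}\om_\phi^n$ (or equivalently $u_0(t)$), and invoke the spectral properties of $\tri_g+X$: this operator is self-adjoint with respect to the weighted inner product $e^{\theta_X(g)}\om^n$, its first eigenvalue is $1$, and the corresponding eigenspace consists of holomorphic potentials. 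Because the modified Futaki invariant vanishes (\thmref{sol lb}) and the spectrum of $\tri_{g_\phi}+X$ converges to that of the soliton, the projection of $\dot\phi$ onto the eigenspace of eigenvalue $1$ decays faster than any directional contribution would suggest, which yields the exponential decay \eqref{sol 0 exponential}.

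Next I would bootstrap to higher-order exponential decay \eqref{sol l exponential} for $u_l(t)$ by differentiating in $t$ and using the equation \eqref{KRF potential sol}, the H\"older and $L^p$-interpolation inequalities, together with the already established estimates on $\dot\phi$, $X(\dot\phi)$ and Ricci asymptotics; the scheme mimics \eqref{l exponential} but has to absorb the $L_X\om_\phi$ term in \eqref{MKRF sol}. By Sobolev embedding this produces $|\dot\phi|_{C^l(g_\phi)}\le Ce^{-\theta t}$ and then $|\phi(t)|_{C^0}\le |\phi(0)|_{C^0}+Ce^{-\theta t}$ via integration of the flow equation.

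The final step is to upgrade to $C^\infty$ convergence. Here I would follow Yau's second order estimate \cite{MR480350}, as adapted to parabolic equations by Cao \cite{MR799272} and to the soliton setting by Zhu \cite{MR1817785}, to deal with the additional term $\tri X(\phi)$ at the maximum point of $e^{-C\phi}(n+\tri\phi)$; combined with the uniform bounds on $\dot\phi$, $X(\phi)$, $a(t)$ and $\phi$, this yields $0<n+\tri\phi\le C$, hence $L^\infty$-equivalence of $g$ and $g_\phi$. Calabi's $C^3$ computation (with the Zhu modification) together with \thmref{regularity of PCF: reg of PCF} then provides uniform higher-order bounds, so \eqref{sol l exponential} upgrades to $|\phi-\phi_\infty|_{C^l(g_\infty)}\le C_le^{-\theta t}$. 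The main obstacle I expect is precisely this spectral/Futaki step: one must ensure that, in the weighted setting, the modified Futaki invariant kills exactly the zero-eigenspace contribution to $\dot\phi$ so that the evolution inequality for $v_0(t)$ gives a strictly negative coefficient; this is what distinguishes the soliton case from a naive repetition of the K\"ahler-Einstein argument.
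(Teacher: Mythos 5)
Your proposal is correct and follows essentially the same route as the paper: reduction to the modified flow via Proposition \ref{hol sol} and the centrality of $X$ in $\eta_r(M)$, the De Giorgi iteration for the $L^\infty$ bounds on $\dot\phi$ and $X(\dot\phi)$, the weighted spectral/modified-Futaki argument for the exponential decay of $u_0(t)$, the higher-order bootstrap for $u_l(t)$, and the Yau--Cao--Calabi--Zhu estimates for the $C^\infty$ upgrade. No substantive differences to report.
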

We remark here Zhu \cite{zhu-2009} also discussed the stability of K\"ahler-Ricci flow near a K\"ahler-Ricci soliton by using Perelman's estimate \cite{Perelman} and Chen-Tian's energy method \cite{MR1893004}\cite{MR2219236}.
\section{Weak flow}\label{WF}
We can weaken the initial condition
according to Chen-Tian \cite{MR2434691}, Chen-Tian-Zhang \cite{ctz-2008} and Song-Tian \cite{songtian2009}.
Let $a(t)=0$ in \eqref{KRF potential ke},
the potential equation reads
\begin{equation}\label{KRF potential ke no}
\begin{cases}
\frac{\p\vphi}{\p t}&=\log\frac{\om^n_\vphi}{\om^n}+\vphi\\
\vphi(0)&=\vphi_0.
\end{cases}
\end{equation}
We defined $\vphi_0$ is the limit of
$\vphi_s\in PSH(M,\om)\cap L^\infty(M)$ in $L^\infty$ norm.
Meanwhile, $\om_{\vphi_0}\geq0$ in the current sense.
Let the weak solution
be a limit of a sequence of approximate solution by
$$\vphi(t)=\lim_{s\rightarrow0}\vphi(s,t).$$
In their articles, they proved that
\begin{thm}(Chen-Tian \cite{MR2434691}, Chen-Tian-Zhang \cite{ctz-2008}, Song-Tian \cite{songtian2009})\label{weak}
If $\vphi_0$ is defined above with $|\vphi_0|_{L^\infty}\leq A$
and $|\frac{\om_{\vphi_0}^n}{\om^n}|_{L^p(M,\om)}\leq B$ for $p>1$,
there is a unique smooth solution $g_\vphi(t)$ of \eqref{KRF} for $t>0$
such that $$\lim_{t\rightarrow0^+}\vphi(t)=\vphi_0.$$
\end{thm}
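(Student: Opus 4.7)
The plan is to combine a smooth approximation of the initial datum with uniform a priori estimates that are allowed to degenerate in a controlled way as $t\to 0^+$, and then pass to the limit.

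First, I would construct an approximating sequence $\vphi_{0,s}\in C^\infty(M)$ with $\om+\sqrt{-1}\p\bar\p\vphi_{0,s}>0$, $\vphi_{0,s}\to\vphi_0$ in $L^\infty(M)$, $|\vphi_{0,s}|_{L^\infty}\leq A+1$, and $|\om^n_{\vphi_{0,s}}/\om^n|_{L^p(M,\om)}\leq 2B$. Demailly-type regularization via local convolution, patched together and corrected by a small multiple of a fixed strictly $\om$-psh function, produces such a sequence; the $L^p$ control on the volume ratio passes to the smoothing because the $L^p$ norm is lower semicontinuous under weak convergence of measures. For each $s$, \thmref{short time: main} yields a classical solution $\vphi_s(t)$ to \eqref{KRF potential ke no} on a maximal interval $[0,T_s)$.

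The core is to derive $s$-independent bounds on a fixed interval $[0,T]$. I would proceed in four steps. (i) An $L^\infty$ bound on $\vphi_s$ obtained by rewriting the flow as the Monge-Amp\`ere equation $\om_{\vphi_s}^n=e^{\dot\vphi_s-\vphi_s}\om^n$ and invoking Kolodziej's pluripotential estimate, using the uniform $L^p$ bound on the right-hand side together with maximum-principle control on $\dot\vphi_s-\vphi_s$. (ii) One-sided bounds for $\dot\vphi_s$ on $(0,T]$ that blow up no faster than $\log(1/t)$, proved by applying the maximum principle to auxiliary quantities such as $t\dot\vphi_s-\vphi_s+nt$ and $t\dot\vphi_s-Ct$ (the Chen-Tian-Zhang auxiliary functions), where the key point is that the initial value of $\dot\vphi_s$ is only in $L^p$. (iii) A parabolic Aubin-Yau Laplacian estimate yielding $0<n+\tri\vphi_s\leq C(t)$ with $C(t)$ depending only on the previous bounds and degenerating in $t$ only through $\|\dot\vphi_s\|_\infty$. (iv) Evans-Krylov and parabolic Schauder estimates promoting the $C^2$ bound to $C^{k,\a}$ on each $[t_0,T]$.

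Passing to a subsequence gives a limit $\vphi(t)$ smooth on $M\times(0,T]$ and solving \eqref{KRF potential ke no} classically there. Convergence $\vphi(t)\to\vphi_0$ in $L^\infty$ as $t\to 0^+$ follows from $|\vphi_s(t)-\vphi_{0,s}|\leq \int_0^t|\dot\vphi_s(\tau)|\,d\tau\leq Ct\log(1/t)$, the uniform convergence $\vphi_{0,s}\to\vphi_0$, and a diagonal argument. Uniqueness is obtained by a comparison-principle argument: given two weak solutions arising from different approximating sequences, their difference satisfies a linear parabolic inequality with coefficients controlled by the same a priori estimates, forcing coincidence.

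The principal obstacle is step (ii): extracting a $\log(1/t)$ bound on $\dot\vphi_s$ with constants depending only on $A$, $B$ and $p$. Without a pointwise bound on $\log(\om^n_{\vphi_0}/\om^n)$, the naive maximum principle applied to $\dot\vphi_s$ directly fails, and one must instead exploit the Kolodziej $L^\infty$ bound from step (i) to close the loop between the auxiliary function and the volume form, which is the heart of the Chen-Tian-Zhang and Song-Tian arguments.
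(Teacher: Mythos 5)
The paper itself does not prove this statement: Theorem~\ref{weak} is imported from the cited references, and the only ingredient the paper actually uses downstream is the interior estimate \eqref{weak bound}. So the comparison is with Chen--Tian--Zhang and Song--Tian, and your outline does follow their strategy: smooth approximation of $\vphi_0$, a Kolodziej-type $L^\infty$ bound coming from the $L^p$ control of $\om_{\vphi_0}^n/\om^n$, maximum-principle bounds on $\dot\vphi$ that are allowed to degenerate as $t\to0^+$, the parabolic Aubin--Yau and Evans--Krylov estimates away from $t=0$, and a diagonal limit.

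Two steps are glossed over in a way that matters. First, the symmetric bound $|\dot\vphi_s(\tau)|\leq C\log(1/\tau)$ is not what the Tsuji/Chen--Tian--Zhang auxiliary functions deliver: the maximum principle applied to $t\dot\vphi-\vphi-nt$ gives only $\dot\vphi\leq C/t$ from above, while the logarithmic form $\dot\vphi\geq n\log t-C$ holds for the lower bound. Since $C/\tau$ is not integrable at $\tau=0$, your derivation of $\lim_{t\to0^+}\vphi(t)=\vphi_0$ by writing $|\vphi_s(t)-\vphi_{0,s}|\leq\int_0^t|\dot\vphi_s|\,d\tau$ only controls the direction $\vphi(t)\geq\vphi_0-o(1)$; the opposite inequality requires a separate argument (in the references it comes from a comparison/stability estimate for the complex Monge--Amp\`ere operator, not from integrating $\dot\vphi$), and this is precisely where the hard work in those papers lies. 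If you believe a genuine $\log(1/\tau)$ upper bound for $\dot\vphi$ is available under the $L^p$ hypothesis alone, you must exhibit the auxiliary quantity that produces it. Second, uniqueness is not a routine linear comparison: the coefficients of the linearized parabolic operator blow up as $t\to0^+$ and the two candidate solutions agree at $t=0$ only in $L^\infty$, so the maximum principle cannot be run down to $t=0$ without a quantitative rate at which each solution attains its initial value. In the cited proofs this is handled by showing the limit is independent of the regularizing sequence, using monotone approximations and the comparison principle for the smooth approximating flows. With these two points repaired, your sketch matches the arguments in the references.
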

The key estimate in their proof is that (see Proposition 3.2 in Song-Tian \cite{songtian2009})
\begin{align}\label{weak bound}
|\vphi(t)|_{C^k}\leq C(t,T,k,A,B) \text{ on } (0,T].
\end{align}
Introduce the space
$$\mathcal{N}(\eps_0;B,p)
=\{\vphi\vert
|\vphi|_{L^\infty}\leq\eps_0,
|\frac{\om_{\vphi}^n}{\om^n}|_{L^p(M,\om)}
\leq B \text{ for some } p>1\}$$ for fixing $B$ and $p$.
Here $B$ and $p$ should be chosen such that $\mathcal{N}(\eps_0;B,p)$ is not a empty set.
Clearly, if $|\vphi_0|_{C^{1,1}}\leq\eps_0$,
then $\vphi_0\in\mathcal{N}(\eps_0,1+(2^n-1)\eps_0,\infty)$.
Actually, we can see that
\begin{lem}\label{weak ini}
When we fix $t_0\in(0,T]$,
for any $\eps_1>0$,
there is a small $\eps_0$,
for any $\vphi_0\in\mathcal{N}(\eps_0;B,p)$,
we have $|\vphi(t_0)|_{C^{2,\a}}\leq\eps_1$.
\end{lem}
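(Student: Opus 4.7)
The plan is to argue by contradiction, combining the smoothing estimate \eqref{weak bound} from \thmref{weak} with a maximum-principle $L^\infty$ decay. Suppose the conclusion fails: there exist $\eps_1>0$ and a sequence $\vphi_0^s\in\mathcal{N}(1/s;B,p)$ such that the corresponding weak solutions $\vphi^s$ of \eqref{KRF potential ke no} satisfy $|\vphi^s(t_0)|_{C^{2,\a}}\geq\eps_1$.

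First I would extract a smooth subsequential limit at $t_0$. Since $|\vphi_0^s|_{L^\infty}\le 1/s\le 1$ and $|\om_{\vphi_0^s}^n/\om^n|_{L^p}\le B$, \thmref{weak} produces smooth solutions on $(0,T]$ and the key estimate \eqref{weak bound} yields $|\vphi^s(t_0)|_{C^k}\le C(t_0,T,k,1,B)$ for every $k$, uniformly in $s$. Arzela-Ascoli then furnishes a subsequence converging smoothly at $t_0$ to some $\vphi_\infty$ satisfying $|\vphi_\infty|_{C^{2,\a}}\geq\eps_1$.

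Next I would show $\vphi_\infty\equiv 0$ by the maximum principle applied to \eqref{KRF potential ke no}. At a spatial maximum of $\vphi^s(\cdot,t)$ the inequality $\sqrt{-1}\p\bar\p\vphi^s\le 0$ forces $\om_{\vphi^s}^n\le\om^n$, hence $\log(\om_{\vphi^s}^n/\om^n)\le 0$ and $\p_t\vphi^s\le\vphi^s$ there; the symmetric argument at a spatial minimum gives $\p_t\vphi^s\ge\vphi^s$. Integration in $t$ yields $\|\vphi^s(t)\|_{L^\infty}\le e^t\|\vphi_0^s\|_{L^\infty}\le e^t/s$, so $\vphi^s(t_0)\to 0$ in $L^\infty$. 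Consequently $\vphi_\infty\equiv 0$, contradicting $|\vphi_\infty|_{C^{2,\a}}\geq\eps_1$. In the K\"ahler-Ricci soliton setting the same scheme applies to the weak version of the modified potential equation \eqref{KRF potential sol} with $a(t)$ suppressed: the drift term $\Re X(\phi)$ vanishes automatically at spatial extrema since $X(\phi)$ is a linear combination of the first derivatives of $\phi$, so the same bound $\|\phi^s(t)\|_{L^\infty}\le e^t/s$ is obtained.

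The principal delicate point is that the pointwise maximum principle is being applied to a flow whose initial datum lies only in $L^\infty$. I would handle this either by applying the classical maximum principle to the smooth approximating sequence underlying the construction of the weak solution in \thmref{weak} and passing the pointwise bound to the limit in the approximation parameter, or by using that $\vphi^s$ is already smooth on every $[t_1,t_0]\subset(0,T]$ together with $\vphi^s(t_1)\to\vphi_0^s$ in $L^\infty$ as $t_1\to 0^+$, applying the maximum principle on each such subinterval and then letting $t_1\to 0$.
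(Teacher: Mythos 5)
Your proposal is correct and follows essentially the same route as the paper: contradiction, uniform smoothing via \eqref{weak bound} plus Arzel\`a--Ascoli at $t_0$, and a maximum-principle bound $\sup_M|\vphi_s(t_0)|\leq e^{t_0}\sup_M|\vphi_s(0)|$ forcing the limit to vanish. The only (cosmetic) difference is that you run the maximum principle directly on the Monge--Amp\`ere form of the equation at spatial extrema, whereas the paper first linearizes by writing $\log\frac{\om_\vphi^n}{\om^n}=\tri_{g_{a\vphi}}\vphi$; your extra care about the $L^\infty$ initial datum and the soliton drift term is sound but not needed beyond what the paper does.
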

\begin{proof}
If the conclusion fails,
we could choose a sequence of $\vphi_s$ such that
$$|\vphi_s|_{L^\infty}\leq\frac{1}{s}
\text{ and }
|\frac{\om_{\vphi_s}^n}{\om^n}|_{L^p(M,\om)}\leq B.$$
But for each corresponding solution $\vphi_s(t)$
constructed by \thmref{weak}, we have
\begin{align}\label{weak ini con}
|\vphi_s(t_0)|_{C^{2,\a}}>\eps_1.
\end{align}
Setting $g_{a\vphi i\bar j}
=\int_0^t(g_{i\bar j}+a\vphi_{i\bar j})da>0$, we rewrite \eqref{KRF potential ke no} as follows
\begin{equation*}
\begin{cases}
\frac{\p\vphi}{\p t}&=\tri_{g_{a\vphi}}\vphi+\vphi\\
\vphi_s(0)&=\vphi_s.
\end{cases}
\end{equation*}
By using the maximal principle we obtain that
\begin{align}\label{weak L infty}
\sup_M|\vphi_s(t_0)|\leq e^{t_0}\sup_M|\vphi_s|.
\end{align}
By \eqref{weak bound},
we can pass a subsequence of $\vphi_{s_i}(t_0)$ such that
$$\lim_{i\rightarrow\infty}\vphi_{s_i}(t_0)=\vphi_\infty(t_0)$$
in $C^k$ for $k\geq0$.
Let $s=s_i$ in \eqref{weak L infty} then the limit
approaches
\begin{align}
\sup_M|\vphi_\infty(t_0)|\leq 0
\end{align}
which contradicts \eqref{weak ini con}.
\end{proof}
Now as we have a $C^{2,\a}$ small initial datum $\vphi(t_0)$,
we normalize it to be $\vphi_0-I(\vphi_0)$
which is also $C^{2,\a}$ small.
Then we can solve equation \eqref{KRF potential ke} with this initial datum.
Therefore combining Proposition \ref{no hol},
Proposition \ref{hol}, Proposition \ref{sta} and \lemref{weak ini} we obtain main \thmref{sta KE}. Analogously, we apply Proposition \ref{hol sol}, Proposition \ref{sol exp} and \lemref{weak ini} to obtain \thmref{sta sol}.
\section{Another choice of holomorphic transformations}\label{ACHT}
In this section, we follows the argument by Bando-Mabuchi
\cite{MR946233} and Chen-Tian \cite{MR1893004}
to find a good holomorphic transformation.
$I$ and $J$ functional are defined as
\begin{align*}
I(\om,\om_\vphi)&=\frac{1}{V}\int_M\vphi(\om^n-\om^n_\vphi),\\
J(\om,\om_\vphi)&=\frac{1}{V}\sum_{i=0}^{n-1}\int_M\frac{i+1}{n+1}
\sqrt{-1}\p\vphi
\wedge\bar\p\vphi\wedge\om^i\wedge\om^{n-1-i}_\vphi.
\end{align*}
From Aubin \cite{MR1636569}
they are both semi-positive functionals and satisfy
\begin{align}\label{IJ}
0\leq I(\om,\om_\vphi)\leq(n+1)(I(\om,\om_\vphi)-J(\om,\om_\vphi))
\leq n I(\om,\om_\vphi).
\end{align} for all $\vphi\in\mathcal{H}$.
Fix $\vphi\in\mathcal{H}_0$, consider a functional
\begin{align*}
\Psi(\sigma)=(I-J)(\om_\vphi,\sigma^\ast\om)=(I-J)(\om_\vphi,\om_\rho)
\end{align*}
for any $\sigma\in Aut_r(M)$
which is the reductive subgroup of $Aut(M)$
and $\sigma^\ast\om=\om+\p\bar\p\rho$.
Since $\om_{\rho}$ is a K\"ahler-Einstein metric, it satisfies
\begin{align}\label{rho}
\log\frac{\om_{\rho}^n}{\om^n}+\rho
=0 \text{ and }
I(\rho)=0.
\end{align}
If $\om_\rho$ is the minimal point of $\Psi$,
for any $u\in\L_1(\om_\rho)$,
we have
\begin{align}\label{orthonomal}
\int_M(\rho-\vphi)u\om^n_\rho=0.
\end{align}
It is known that $\eta(M)\cong\L_1(\om)$ for any K\"ahler-Einstein
metric $\om$ in \cite{MR0094478}.
In order to prove the minimizer of $\Psi$ can always be attained,
it is sufficient to prove
\begin{prop}
For all
$\rho\in\{\rho\vert\sigma^\ast\om=\om_\rho,\sigma\in Aut_r(M),
\Psi(\sigma)\leq r\}$,
we have
$$|\vphi-\rho|_{C^{2,\a}(g_\vphi)}\leq C(|\vphi|_{C^{4,\a}}).$$
\end{prop}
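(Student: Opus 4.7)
The plan is to reduce the problem to bounding $\rho$ alone in $C^{2,\a}$ by exploiting Aubin's inequality \eqref{IJ} and the finite-dimensionality of $\mathcal{E}_0$, then combining with the $C^{4,\a}$ bound on $\vphi$.

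First, the hypothesis $\Psi(\sigma)=(I-J)(\om_\vphi,\om_\rho)\leq r$ together with \eqref{IJ} gives $I(\om_\vphi,\om_\rho)\leq (n+1)r$. Expanding through integration by parts yields
\[
I(\om_\vphi,\om_\rho)=\frac{1}{V}\sum_{i=0}^{n-1}\int_M\sqrt{-1}\,\p(\rho-\vphi)\wedge\bar\p(\rho-\vphi)\wedge\om_\vphi^i\wedge\om_\rho^{n-1-i},
\]
which, since each term is non-negative, provides a weighted $L^2$ control on $\nabla(\rho-\vphi)$ with respect to $g_\vphi$. Since $|\vphi|_{C^{4,\a}}$ bounds $\vphi$ in $C^0$ and hence guarantees that $g_\vphi$ is uniformly equivalent to $g$, the standard cocycle relations for $I$ and $J$ transfer this into a bound on $I(\om,\om_\rho)$ depending only on $|\vphi|_{C^{4,\a}}$ and $r$.

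Next, I would use that $\rho\in\mathcal{E}_0$ and that, by \thmref{BM en} and the discussion in \secref{CHT}, $\mathcal{E}_0$ is a single $Aut_0(M)$-orbit and a finite-dimensional totally geodesic submanifold of $\mathcal{H}_0$. Following Bando-Mabuchi's properness analysis \cite{MR946233}, $I(\om,\om_\rho)$ grows at least linearly along any geodesic ray in $\mathcal{E}_0$ emanating from the origin, so its sublevel sets are precompact in $\mathcal{E}_0$. Since $\mathcal{E}_0$ is finite-dimensional, all smooth norms are mutually equivalent on such precompact subsets, yielding $|\rho|_{C^{2,\a}}\leq C(|\vphi|_{C^{4,\a}},r)$.

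Combining this with $|\vphi|_{C^{2,\a}}\leq |\vphi|_{C^{4,\a}}$ and the $C^{2,\a}$-equivalence of $g_\vphi$ and $g$, we finally obtain
\[
|\vphi-\rho|_{C^{2,\a}(g_\vphi)}\leq |\vphi|_{C^{2,\a}(g_\vphi)}+|\rho|_{C^{2,\a}(g_\vphi)}\leq C(|\vphi|_{C^{4,\a}}).
\]
The principal obstacle is making the Bando-Mabuchi properness quantitative: one needs the linear growth constant of $I-J$ along geodesic rays in $\mathcal{E}_0$ to be controlled uniformly in the reference $\om_\vphi$, which reduces to a uniform lower bound on the second variation of $I-J$ at K\"ahler-Einstein points for $\vphi$ ranging over a $C^{4,\a}$ ball. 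Once this uniform growth is established, the finite-dimensionality of $\mathcal{E}_0$ closes the argument with no further PDE input.
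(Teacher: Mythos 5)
Your proposal takes a genuinely different route from the paper, and the difference matters. The paper never tries to bound $\rho$ by itself. It bounds $\osc_M(\rho-\vphi)$ directly: writing $\sup_M(\rho-\vphi)$ via the Green's function of $g_\vphi$ and $\inf_M(\rho-\vphi)$ via the Green's function of $g_\rho$ (whose diameter is controlled by Myers since $Ric(\om_\rho)=\om_\rho$), the difference of the two averages is exactly $\frac1V\int_M(\rho-\vphi)(\om_\vphi^n-\om_\rho^n)=I(\om_\vphi,\om_\rho)\le(n+1)r$, so the oscillation is bounded with no transfer to the fixed reference needed. It then uses that, because $\om_\rho$ is K\"ahler--Einstein, $u=\rho-\vphi$ solves $(\om_\vphi+\sqrt{-1}\p\bar\p u)^n=e^{-u+h_\vphi}\om_\vphi^n$, and applies Yau's second-order estimate (with constant $e^{C\osc_M u}C(|\vphi|_{C^4})$) followed by Krylov to reach $C^{2,\a}$. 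Your route replaces all of this PDE input with a soft compactness argument on the finite-dimensional orbit $\mathcal{E}_0$, which, if completed, would in fact give the stronger conclusion $|\rho|_{C^{2,\a}}\le C$ (which the paper only extracts afterwards, in Corollary \ref{PCF on SoKM: gauge bound}).

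However, two of your steps carry essentially all the weight and are not justified. First, $I(\cdot,\cdot)$ and $J(\cdot,\cdot)$ do not satisfy an exact cocycle relation; passing from $I(\om_\vphi,\om_\rho)\le(n+1)r$ to a bound on $I(\om,\om_\rho)$ requires a quasi-triangle inequality for $I$, which is a genuine theorem and should be cited or proved, not dismissed as ``standard cocycle relations.'' Second, and more seriously, the precompactness of sublevel sets of $I(\om,\cdot)$ on $\mathcal{E}_0\cong Aut_r(M)/K$ is precisely the kind of a priori compactness statement this proposition exists to supply: the surrounding text introduces the proposition as what is needed ``to prove the minimizer of $\Psi$ can always be attained.'' Invoking Bando--Mabuchi properness of $I$ on the orbit therefore defers the crux to an unproved claim of comparable depth, bordering on circularity (it is not literally circular, since your properness concerns the fixed $\om$ acting on its own orbit rather than the varying reference $\om_\vphi$, but you give no argument for it beyond asserting linear growth along geodesic rays). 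Until that exhaustion property is established independently, the paper's Green's function plus Monge--Amp\`ere route is the one that actually closes the argument.
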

\begin{proof}
Clearly,
$$-\tri_\vphi(\rho-\vphi)<n \text{ and }
 -\tri_\rho(\rho-\vphi)>-n.$$
Since the lower bound of the Green function is given by
\begin{align}\label{low G}
G_\vphi\geq-\gamma\frac{D_\vphi^2}{Vol_\vphi}\doteq-A_\vphi.
\end{align}
Here the volume is a constant in fixed K\"ahler class
and $diam(g_\vphi)\leq Cdiam(g)$ by $|\vphi|_{C^{2}}\leq C$.
Using the Green formula and \eqref{low G}, we obtain
\begin{align}\label{up}
\sup_M(\rho-\vphi)&=\frac{1}{V}\int_M(\rho-\vphi)\om_\vphi^n-\frac{1}
{V}\int_M\tri_\vphi(\rho-\vphi)(y)(G_\vphi(x,y)+A_\vphi)\om_\vphi^n(y)
\nonumber\\
&\leq\frac{1}{V}\int_M(\rho-\vphi)\om_\vphi^n+nA_\vphi.
\end{align}
Similarly, we deduce
\begin{align}\label{low}
\inf_M(\rho-\vphi)&
=\frac{1}{V}\int_M(\rho-\vphi)\om_\rho^n-
\frac{1}{V}\int_M\tri_\rho(\rho-\vphi)(y)(G_\rho(x,y)+A_\rho)
\om_\rho^n(y)\nonumber\\
&\geq\frac{1}{V}\int_M(\rho-\vphi)\om_\rho^n-nA_\rho.
\end{align}
Because
$Ric(\rho)=\om_\rho$, $diam(g_\rho)\leq\sqrt{2n-1}\pi$
by Myers theorem.
Combining \eqref{up} and \eqref{low} we have
\begin{align}\label{osc}
\osc_M(\rho-\vphi)&\geq\frac{1}{V}\int_M(\rho-\vphi)
(\om_\vphi^n-\om_\rho^n)+C(|\vphi|_{C^2}).
\end{align}
From \eqref{IJ} we obtain
$$\frac{1}{V}\int_M(\rho-\vphi)(\om_\vphi^n-\om_\rho^n)
=I(\om_\vphi,\om_\rho)\leq
(n+1)(I-J)(\om_\vphi,\om_\rho)\leq(n+1)r.$$
Since $\om_\rho$ is a K\"ahler-Einstein metric, we have
\begin{align}\label{eq vphi}
(\om_\vphi+\sqrt{-1}\p\bar\p(\rho-\vphi))^n
=e^{-(\rho-\vphi)+h_\vphi}
\om_\vphi^n.
\end{align}
with
\begin{align*}
\sqrt{-1}\p\bar\p h_{\vphi}
=Ric(\om_\vphi)-\om_\vphi
\text { and }
\int_Me^{h_{\vphi}}\om_\vphi^n=Vol(M).
\end{align*}
By using the second order estimate in Yau \cite{MR480350}, we get
\begin{align}
n+\tri_\vphi(\rho-\vphi)&\leq
e^{C\osc_M(\rho-\vphi)}
C(\sup_M(\inf_{i\neq k}|R_{\vphi i \bar i l\bar l}|),
\inf_MS_\vphi,\sup_M h_\vphi)\nonumber\\
&\leq e^{C\osc_M(\rho-\vphi)}C(|\vphi|_{C^4}).
\end{align}
Then the Krylov estimate shows $\rho-\vphi$
has $C^{2,\alpha}$ bound.
\end{proof}
Thus we also obtain the uniform bound of gauge $\rho$.
Our previous discussion implies:
\begin{cor}\label{PCF on SoKM: gauge bound}
If $|\vphi|_{C^{4,\a}}$ is bounded
and $\rho$ is the minimizer of $\Psi$,
then $|\vphi-\rho|_{C^{2,\a}}$ and $|\rho|_{C^{2,\a}}$
are both bounded.
\end{cor}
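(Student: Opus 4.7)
The corollary is essentially a repackaging of the preceding proposition, so the plan is to feed the minimizer $\rho$ into the hypothesis set of that proposition with a controlled radius $r$, and then extract bounds by the triangle inequality. First I would set $r := \Psi(\mathrm{id}) = (I-J)(\omega_\varphi,\omega)$. Because $I$ and $J$ are semi-positive and explicitly given by integrals polynomial in $\partial\bar\partial\varphi$ and $\omega$, a direct expansion produces $r \le C(|\varphi|_{C^2}) \le C(|\varphi|_{C^{4,\alpha}})$. Since $\rho$ is the minimizer of $\Psi$ over the orbit $\{\sigma^*\omega : \sigma \in \mathrm{Aut}_r(M)\}$, we have $\Psi(\sigma_\rho) \le \Psi(\mathrm{id}) = r$, where $\sigma_\rho$ is the holomorphic transformation corresponding to $\rho$ via $\sigma_\rho^*\omega = \omega_\rho$. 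Thus $\rho$ lies in the set to which the previous proposition applies.

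Next I would apply the preceding proposition directly to conclude
\[
|\varphi - \rho|_{C^{2,\alpha}(g_\varphi)} \le C(|\varphi|_{C^{4,\alpha}}).
\]
To convert the $g_\varphi$-norm into a $g$-norm, observe that the assumption $|\varphi|_{C^{4,\alpha}}$ bounded gives in particular $\lambda \omega \le \omega_\varphi \le \Lambda \omega$ for constants depending only on $|\varphi|_{C^{4,\alpha}}$, together with a $C^{2,\alpha}$ bound on the Christoffel symbols of $g_\varphi$ in terms of those of $g$. Hence the $C^{2,\alpha}$ norms with respect to $g$ and $g_\varphi$ are equivalent, yielding
\[
|\varphi - \rho|_{C^{2,\alpha}(g)} \le C(|\varphi|_{C^{4,\alpha}}).
\]
Finally, the triangle inequality $|\rho|_{C^{2,\alpha}} \le |\varphi|_{C^{2,\alpha}} + |\varphi - \rho|_{C^{2,\alpha}}$ together with the assumed control on $|\varphi|_{C^{4,\alpha}}$ gives the second bound claimed.

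The only real content here is the preceding proposition, which already does the heavy lifting via the Green function / Moser–Yau / Krylov chain of estimates; so the corollary itself is a one-line consequence once one has verified that $\Psi(\mathrm{id})$ provides an admissible a priori radius $r$. The main potential pitfall, and the only place where care is needed, is to make sure that the minimizer of $\Psi$ is actually attained on $\mathrm{Aut}_r(M)$ rather than escaping to infinity; this is ensured precisely by the uniform $C^{2,\alpha}$ bound furnished by the proposition (applied on sublevel sets $\{\Psi \le r\}$), which provides the compactness needed to pass to a minimizer, so the argument is in fact self-consistent.
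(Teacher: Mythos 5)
Your argument is correct and matches the paper's intent: the paper derives this corollary directly from the preceding proposition (``our previous discussion implies''), exactly as you do, by noting that the minimizer satisfies $\Psi(\sigma_\rho)\le\Psi(\mathrm{id})=(I-J)(\om_\vphi,\om)\le C(|\vphi|_{C^2})$, so the sublevel-set hypothesis holds with a controlled $r$, and then applying the proposition plus the triangle inequality. Your remarks on norm equivalence and on the attainment of the minimizer are details the paper leaves implicit, and they are consistent with its discussion.
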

This lemma implies $g_\rho$ is equivalent to $g$.
We now turn to obtain the uniqueness of the
critical points of the functional $\Psi$
when $\vphi$ is small.
The second variation of $\Psi$ at $\rho$ is given by the formula
\begin{align}\label{2ed va}
D^2\Psi_{\rho}(u,v)
=\frac{1}{V}\int_M(1+\frac{1}{2}\tri_\rho\rho)uv\om^n_\rho.
\end{align}

\begin{lem}\label{uniqueness}
For all $|\vphi|_{C^{2,\a}}\leq \eps_1$ and $u\in\L_1(\om_\rho)$,
the bilinear form $D^2\Psi_{\rho}(u,u)$ is positive.
Hence $\rho$ is unique.
\end{lem}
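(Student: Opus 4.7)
The plan is to exploit the explicit second variation formula \eqref{2ed va}:
\begin{align*}
D^2\Psi_\rho(u,u) = \frac{1}{V}\int_M\Bl1+\tfrac{1}{2}\tri_\rho\rho\Br u^2\,\om^n_\rho,
\end{align*}
and show the factor $1+\tfrac{1}{2}\tri_\rho\rho$ is pointwise close to $1$ as soon as $|\vphi|_{C^{2,\a}}$ is small. The strict positivity of the Hessian at every critical point will then yield local strict convexity of $\Psi$, from which uniqueness follows by an elementary argument on the finite-dimensional geodesically convex space $\mathcal{E}_0$.

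First I would establish the quantitative step: if $|\vphi|_{C^{2,\a}}\le\eps_1$ with $\eps_1$ sufficiently small, then any critical point $\rho$ of $\Psi$ satisfies $|\tri_\rho\rho|_{C^0}<\tfrac{3}{2}$, say. The a priori bound from \corref{PCF on SoKM: gauge bound} shows $|\rho|_{C^{2,\a}}$ is uniformly controlled; the desired \emph{smallness} of $|\rho|_{C^2}$ as $\eps_1\to0$ follows from a standard contradiction-and-compactness argument. Namely, if the claim failed, one could extract a sequence $\vphi_s$ with $|\vphi_s|_{C^{2,\a}}\to 0$ and corresponding critical points $\rho_s$ with $|\rho_s|_{C^2}\ge c>0$; passing to a $C^2$-limit $\rho_\infty$, and noting that \eqref{rho} forces $\rho_\infty$ to be a K\"ahler--Einstein potential solving the critical point equation for $\Psi$ at $\vphi=0$, the Bando--Mabuchi uniqueness \thmref{BM en} combined with $I(\rho_\infty)=0$ gives $\rho_\infty=0$, contradicting $|\rho_\infty|_{C^2}\ge c$.

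Granted this, $1+\tfrac{1}{2}\tri_\rho\rho\ge c_0>0$ pointwise, so
\begin{align*}
D^2\Psi_\rho(u,u)\ge\frac{c_0}{V}\int_M u^2\,\om^n_\rho,
\end{align*}
which is strictly positive for any nonzero $u$, in particular for any nonzero $u\in\L_1(\om_\rho)$. For the uniqueness claim, I would use that $\mathcal{E}_0$ is a finite-dimensional totally geodesic submanifold of $\mathcal{H}_0$ and that, under the identification $\eta(M)\cong\L_1(\om_\rho)$, the tangent directions to $\mathcal{E}_0$ at $\rho$ are precisely the admissible variations in $\L_1(\om_\rho)$. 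Hence along any geodesic in $\mathcal{E}_0$ through a critical point, $\Psi$ has strictly positive second derivative, so $\Psi$ is strictly convex along geodesics through critical points; two distinct critical points joined by a geodesic would contradict this, giving uniqueness.

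The main obstacle is the quantitative step of upgrading the qualitative bound of \corref{PCF on SoKM: gauge bound} to the smallness $|\rho|_{C^2}=o(1)$ as $|\vphi|_{C^{2,\a}}\to 0$; everything else is either the direct computation \eqref{2ed va} or a soft convexity argument on the symmetric space $\mathcal{E}_0$. One subtlety I would be careful about is that \corref{PCF on SoKM: gauge bound} is stated in terms of $|\vphi|_{C^{4,\a}}$, so the contradiction argument should be set up to allow bootstrapping the $C^{2,\a}$ control of $\vphi$ (and hence of $\rho$) through the K\"ahler--Einstein equation \eqref{rho} to obtain the needed compactness in $C^2$.
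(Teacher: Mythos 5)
Your endgame is the same as the paper's: reduce everything to the second variation formula \eqref{2ed va} and show that $1+\tfrac{1}{2}\tri_\rho\rho$ is pointwise close to $1$ by proving $|\rho|_{C^{2}}$ is small when $|\vphi|_{C^{2,\a}}$ is small. But the mechanism you propose for that smallness --- a compactness-and-contradiction argument on a sequence of critical points $\rho_s$ --- has a genuine gap, and it is precisely the gap you flag at the end without resolving. To extract a $C^2$-convergent subsequence of $\rho_s$ you need a uniform a priori bound on $\{\rho_s\}$, equivalently you must confine the corresponding $\sigma_s$ to a compact subset of the noncompact group $Aut_r(M)$. The only a priori bound available, \corref{PCF on SoKM: gauge bound}, is controlled by $|\vphi|_{C^{4,\a}}$, and the hypothesis of the lemma gives only $|\vphi|_{C^{2,\a}}\leq\eps_1$. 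Bootstrapping through the K\"ahler--Einstein equation \eqref{rho} does not help: elliptic regularity upgrades an \emph{existing} $C^{2,\a}$ bound on $\rho$ to higher norms, but it cannot produce the initial bound, and it says nothing about the regularity of $\vphi$, which is the quantity entering \corref{PCF on SoKM: gauge bound}. Without that initial bound the sequence $\rho_s$ could escape to infinity along the orbit $\mathcal{E}_0\cong Aut_r(M)/K$ and no limit $\rho_\infty$ exists; one would need an additional properness statement for $I-J$ restricted to the orbit, which you do not supply.

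The paper avoids this entirely by a different and more direct route: it rewrites the relation between $\om_\vphi$ and $\om_\rho$ as the Monge--Amp\`ere equation \eqref{eq rho}, observes that $h_\vphi$ involves only two derivatives of $\vphi$ so that $|h_\vphi|_{C^{\a}}\leq C\eps_1$, and then applies the implicit function theorem to the operator $\Phi(a,b)=\log\frac{(\om_\rho+\sqrt{-1}\p\bar\p a)^n}{\om_\rho^n}+a+b$ on the subspace $C^{2,\a}_\perp(M)$. The point is that the first-order criticality condition \eqref{orthonomal} places $\vphi-\rho$ exactly in the orthogonal complement of $\L_1(\om_\rho)$, where the linearization $\tri_\rho+1$ is invertible; this yields $|\vphi-\rho|_{C^{2,\a}}\leq C\eps_1$ and hence $|\rho|_{C^{2,\a}}\leq C\eps_1<1$ with no compactness argument and no $C^{4,\a}$ input. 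If you want to salvage your route, you would have to first prove $(I-J)(\om,\om_{\rho_s})\to 0$ (using that $\rho_s$ is a minimizer, with the identity as competitor) and then establish properness of $I-J$ on the finite-dimensional orbit to get compactness; that is doable but is a substantially larger detour than the paper's two-line use of \eqref{orthonomal}. Your concluding geodesic-convexity argument for uniqueness is fine in spirit, provided you note that the Hessian formula \eqref{2ed va} is positive definite throughout the relevant sublevel set (all of whose points are small by the same estimate), not merely at critical points, so that $\Psi$ is strictly convex along the geodesic joining two putative critical points.
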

\begin{proof}
Note that \eqref{eq vphi} can be rewritten as
\begin{align}\label{eq rho}
(\om_\rho+\sqrt{-1}\p\bar\p(\vphi-\rho))^n
=e^{-(\vphi-\rho)-h_\vphi}\om_\rho^n.
\end{align}
By definition, $h_\vphi$ is given by
$$h_\vphi
=-\log\frac{\om_\vphi^n}{\om^n}-\vphi-
\log(\frac{1}{V}\int_Me^{-\vphi})\om^n.$$
We conclude that
\begin{align}\label{h vph}
|h_\vphi|_{C^{2,\a}(g_\rho)}\leq C\eps_1\leq\delta
\end{align}
by assumption of $\vphi$.
Let $$C_\perp^{2,\a}(M)=\{\vphi\in C^{2,\a}(M)\vert
\int_M\vphi u\om_\rho^n, \forall u\in \L_1(\om_\rho)\}.$$
Define the operator of \eqref{eq rho} by
\begin{align*}
\Phi(a,b)
=\log\frac{(\om_\rho+\sqrt{-1}\p\bar\p a)^n}{\om_\rho^n}+a+b,\\
C_\perp^{2,\a}(M)\times C^\a(M)\rightarrow C^\a(M).
\end{align*}
It is clear that $\Phi(\vphi-\rho,h_\vphi)=0$ from \eqref{eq rho}.
The linearized operator of \eqref{eq rho} at $(a,b)=(0,0)$
is given by
$$\delta_a\Phi(v)=\tri_\rho v+v.$$
We infer that $\delta_a\Phi$ is invertible from $C_\perp^{2,\a}(M)$ to $C_\perp^{\a}(M)$.
The implicit function theorem implies
there is a small $\delta$
neighborhood of $0$ in $C^\a(M)$ such that
when $|h_\vphi|_{C^{2,\a}(g_\rho)}\leq\delta$,
we have from \eqref{orthonomal} that
\begin{align}
|\vphi-\rho|_{C^{2,\a}(g_\rho)}\leq C\delta.
\end{align}
Hence we deduce that
\begin{align}
|\rho|_{C^{2,\a}}\leq
|\vphi-\rho|_{C^{2,\a}}+|\vphi|_{C^{2,\a}}\leq C\eps_1<1
\end{align}
by using \corref{PCF on SoKM: gauge bound}, \eqref{h vph}
and choosing appropriate $\eps_1$.
\end{proof}
\bibliography{bib}
\bibliographystyle{plain}

\end{document}